\newtheorem{theorem}{Theorem}[section]
\newtheorem{lemma}[theorem]{Lemma}
\newtheorem{prop}[theorem]{Proposition}
\newtheorem{proposition}[theorem]{Proposition}
\theoremstyle{definition}
\newtheorem{defn}[theorem]{Definition}
\newtheorem{definition}[theorem]{Definition}
\newtheorem{example}[theorem]{Example}
\newtheorem{remark}[theorem]{Remark}
\newtheorem{question}[theorem]{Question}
\numberwithin{equation}{section}
\def\ggg{\mathfrak{g}}
\def\gl{\mathfrak{gl}}
\def\cb{\mathcal{B}}
\def\h{\mathfrak{h}}
\def\g{\mathfrak{g}}
\def\ggg{\mathfrak{g}}
\def\hhh{\mathfrak{h}}
\def\bbb{\mathfrak{b}}
\def\nnn{\mathfrak{n}}
\def\frakt{\mathfrak{T}}
\def\bbc{\mathbb{C}}
\def\bbf{\mathbb{F}}
\def\bbz{\mathbb{Z}}
\def\bbk{\mathbf{k}}
\def\bbn{\mathbb{N}}
\def\bo{{\bar 1}}
\def\bz{{\bar 0}}
\def\ev{{\text{ev}}}
\def\Lie{\text{Lie}}
\def\im{\text{im}}
\def\coker{\text{coker}}
\def\Ad{\text{Ad}}
\def\GL{\text{GL}}
\def\Hom{\text{Hom}}
\def\Aut{\text{Aut}}
\def\Nor{\text{Nor}}
\def\Dist{\text{Dist}}
\def\hmod{\text{-\bf{mod}}}
\def\id{\mathsf{id}}
\def\ttau{{}^\tau{\hskip-2pt}}
\begin{document}

\title[Jantzen Filtration for modular super representaions]{Jantzen filtration and strong linkage principle for modular Lie superalgebras}
\author{Lei Pan and Bin Shu}

\address{Department of Mathematics, East China Normal University, Shanghai 200241, China}
\email{823172606@qq.com}

\address{Department of Mathematics, East China Normal University, Shanghai 200241, China}
\email{bshu@math.ecnu.edu.cn}

\thanks{
{{\it{Mathematics Subject Classification}} (2010):
Primary 17B50, Secondary 17B10, 17B20 and 17B45.
  {\it{The Key words}}: Basic classical Lie superalgebras, super Weyl groups, baby Verma modules, Jantzen filtration and sum formula, strong linkage principle.
This work is  supported partially by the NSFC (Grant No. 11671138), Shanghai Key Laboratory of PMMP (13dz2260400). }}

\begin{abstract}  In this paper, we introduce super Weyl groups, their distinguished elements and  properties for basic classical Lie superalgebras. Then we formulate Jantzen filtration for baby Verma modules in graded restricted module categories of basic classical Lie superalgebras over an algebraically closed field of odd characteristic, and prove a sum formula in the corresponding Grothendieck groups. We finally obtain a strong linkage principle in such categories.
\end{abstract}
\maketitle
\setcounter{tocdepth}{1}
\tableofcontents

\section{Introduction}

\subsection{} Recently, the study on representations of  basic classical Lie superalgebras and the corresponding algebraic supergroups in prime characteristic becomes an increasing interest (\textit{cf}. \cite{BruKl, BruKu,Ku, Marko, MarkoZ1, MarkoZ2, SW, WZ, WZ2,  ZS2, ZS3, Zhang, Z1, ZhengS}, {\sl{etc}}.),   in the meanwhile the study on the counterparts  over the complex number field is going on.
In such a topic, it is a key point to understand irreducible modules through standard modules (baby Verma modules). So far, very few information on this has been known. Recall in  the Bernstein-Gelfand-Gelfand categories of complex semisimple Lie algebras, and in  (rational) representation categories of modular reductive algebraic group and of their reductive Lie algebras associated with specific  tori of the corresponding algebraic groups, some kinds of artfully-constructed filtrations of standard modules introduced by Jantzen in \cite{J0} for complex semisimple Lie algebars, in \cite{J3, J2} for modular reductive algebraic groups and their Lie algebras,  is a very powerful tool to understand character formulas of irreducible modules through the ones of standard modules (\textit{cf}. \cite{Hum3, J0, J1, J2, J3}, {\sl{etc}}.).  In this paper we exploit the theory of Jantzen filtration for reductive Lie algebras in prime characteristic to the super case.

\subsection{} We will focus on restricted representations of a basic classical Lie superalgebra $\ggg$ over $\bbk$, an algebraically closed field of characteristic $p>2$. In order to establish a satisfactory theory on characters for restricted irreducible modules in modular cases, a usual way is to refine the restricted module category $U_0(\ggg)\hmod$ of $\ggg$ by introducing $\bbz$-graded structure.   So we will actually  work with a graded restricted module category $(U_0(\ggg), \frakt)\hmod$, via an additional $\frakt$-action for a maximal torus $\frakt$ of algebraic supergroup $G$ with $\Lie(G)=\ggg$ (see Definition \ref{UGTMOD}).

A super Weyl group $\hat W$ is introduced here, which is a subgroup of the symmetric group of the set of all Borel subalgebras, generated by all real reflections and odd reflections.
In our construction of Jantzen filtration for baby Verma modules in $(U_0(\ggg), \frakt)\hmod$,  an important ingredient is to find a distinguished element $\hat w_0$ of  the super Weyl group $\hat W$ for $\ggg$ (see Theorem \ref{longest element thm}), which plays a role as important as the longest elements of usual Weyl groups.

In order to  establish Jantzen filtration, apart from carrying some known information and methods by Jantzen in \cite{J2} forward to the Lie superalgebra from its even part,  we need do more in seeking for and  analysing  homomorphisms between (twisted) Verma modules, resulted from  odd roots. These (twisted) Verma modules are associated with twisted Borel subalgebras, corresponding different positive root sets arising from the reduced expression of $\hat w_0$. With the above, we finally obtain a satisfactory filtration and sum formula (see Theorem \ref{Main Thm}). As a consequence, we can read off a strong linkage principle from the sum formula (see Theorem \ref{stronglinkage2} and  Theorem \ref{linkage thm}).


\subsection{} One can compare some related results over complex numbers (\textit{cf}. \cite[Chapter 10]{Mu1}, \cite{Mu2} and \cite{SuZh}).

\section{Preliminaries}

Throughout the article, the notations of vector spaces (\textit{resp.} modules, subalgebras) means vector superspaces (\textit{resp.} super modules, super subalgebras). For simplicity, we omit the adjunct word "super". For a superspace $V=V_\bz+V_\bo$, the homogeneous element $v$ is  assumed to have a parity $|v|\in\{\bo,\bz\}$. All vector spaces are defined over $\bbk$ for an algebraically closed field of characteristic $p>2$.

\subsection{Lie superalgebras and algebraic supergroups}

In this section, we will recall some knowledge on basic classical Lie superalgebras along with the corresponding algebraic supergroups. We refer the readers to  \cite{CW, K, Mu1} for Lie superalgebras, \cite{BruKl, FG, M, SW} for algebraic supergroups. 

\subsubsection{Basic classical Lie superalgebras} \label{basicLiesuper}
Following \cite[\S1]{CW}, \cite[\S2.3-\S2.4]{K}, \cite[\S1]{K2} and \cite[\S2]{WZ},  we recall the list of basic classical Lie superalgebras over $\bbf$ for $\bbf=\bbc$ or $\bbf=\bbk$ where $\bbk$ is an algebraically closed field of odd characteristic $p$.
These Lie superalgebras, with even parts being Lie algebras of reductive algebraic groups, are simple over $\bbf$ (the
general linear Lie superalgebras, though not simple, are also included), and they admit an even non-degenerate supersymmetric invariant bilinear form in the following sense.
\begin{defn}\label{form}
Let $V=V_{\bar0}\oplus V_{\bar1}$ be a $\mathbb{Z}_2$-graded space and $(\cdot,\cdot)$ be a bilinear form on $V$.
\begin{itemize}
\item[(1)] If $(a,b)=0$ for any $a\in V_{\bar0}, b\in V_{\bar1}$, then $(\cdot,\cdot)$ is called even;
\item[(2)] if $(a,b)=(-1)^{|a||b|}(b,a)$ for any homogeneous elements $a,b\in V$, then $(\cdot,\cdot)$ is called supersymmetric;
\item[(3)] if $([a,b],c)=(a,[b,c])$ for any homogeneous elements $a,b,c\in V$, then $(\cdot,\cdot)$ is called invariant;
\item[(4)] if one can conclude from $(a,V)=0$ that $a=0$, then $(\cdot,\cdot)$ is called non-degenerate.
\end{itemize}
\end{defn}

Note that when $\bbf=\bbk$ is a field of characteristic $p>2$, there are restrictions on $p$; for example, as shown in \cite[Table 1]{WZ}. So we have the following list

\vskip0.3cm
\begin{center}\label{Table 1}

({\sl{Table 1}}):\label{Table} basic classical Lie superalgebras over $\bbk$
\vskip0.3cm
\begin{tabular}{ccc}
\hline
 $\frak{g}_\bbk$ & $\ggg_{\bar 0}$  & the restriction of $p$ when $\bbf=\bbk$\\
\hline
$\frak{gl}(m|n$) &  $\frak{gl}(m)\oplus \frak{gl}(n)$                &$p>2$            \\
$\frak{sl}(m|n)$ &  $\frak{sl}(m)\oplus \frak{sl}(n)\oplus \bbk$    & $p>2, p\nmid (m-n)$   \\
$\frak{osp}(m|n)$ & $\frak{so}(m)\oplus \frak{sp}(n)$                  & $p>2$ \\
$\text{D}(2,1,\bar a)$   & $\frak{sl}(2)\oplus \frak{sl}(2)\oplus \frak{sl}(2)$        & $p>3$   \\
$\text{F}(4)$            & $\frak{sl}(2)\oplus \frak{so}(7)$                  & $p>15$  \\
$\text{G}(3)$            & $\frak{sl}(2)\oplus \text{G}_2$                    & $p>15$     \\
\hline
\end{tabular}

\end{center}

\vskip0.3cm


\subsubsection{Algebraic supergroups and restricted Lie superalgebras}\label{2.1}
For a given basic Lie superalgebra listed in \S\ref{basicLiesuper}, there is an algebraic supergroup $G_\bbf$  satisfying $\Lie(G_\bbf)=\ggg_\bbf$ such that
\begin{itemize}
\item[(1)] $G_\bbf$ has a subgroup scheme $(G_\bbf)_\ev$ which is an ordinary connected reductive group with $\Lie((G_\bbf)_\ev)=(\ggg_\bbf)_\bz$;
\item[(2)] there is a well-defined action of $(G_\bbf)_\ev$ on $\ggg_\bbf$, reducing to the adjoint action of $(\ggg_\bbf)_\bz$.
    \end{itemize}
The above algebraic supergroup can be constructed in the Chevalley group way, which we call an algebraic supergroup of Chevalley type (or,  Chevalley supergroup in \cite{FG}). The pair ($(G_\bbf)_\ev, \ggg_\bbf)$ constructed in this way is called a Chevalley super Harish-Chandra pair (\textit{cf}.  \cite[\S5]{FG}, \cite[\S3.3]{FG2}, \cite{Kostant} and \cite{Ko}).
Partial results on $G_\bbf$ and $(G_\bbf)_\ev$ can be found in \cite[Part II, \S2.2]{Ber}, \cite{FG}, \cite{FG2}, \cite{Gav}, \cite{Ko}, {\sl etc.}. In the present paper, we will call $(G_\bbf)_\ev$ the purely even subgroup of $G_\bbf$. When the ground field $\bbf=\bbk$ is of odd prime characteristic $p$, one easily knows that $\ggg_\bbk$ is a restricted Lie superalgebra (\textit{cf}. \cite[Definition 2.1]{SW} and \cite{SZ}) in the following sense.

\begin{defn}\label{restricted}
A Lie superalgebra ${\ggg}_{\bbk}=({\ggg}_{\bbk})_{\bar{0}}\oplus({\ggg}_{\bbk})_{\bar{1}}$ over ${\bbk}$ is called a restricted Lie superalgebra,
if there is a $p$-th power map $({\ggg}_{\bbk})_{\bar{0}}\rightarrow({\ggg}_{\bbk})_{\bar{0}}$, denoted as $(-)^{[p]}$, satisfying

(a) $(k x)^{[p]}=k^p x^{[p]}$ for all $k\in{\bbk}$ and $ x\in({\ggg}_{\bbk})_{\bar{0}}$;

(b) $[x^{[p]}, y]=(\text{ad}x)^p(y)$ for all $x\in({\ggg}_{\bbk})_{\bar{0}}$ and $y\in{\ggg}_{\bbk}$;

(c) $(x+y)^{[p]}=x^{[p]}+ y^{[p]}+\sum\limits_{i=1}^{p-1}s_i(x, y)$ for all $x, y\in({\ggg}_{\bbk})_{\bar{0}}$, where $is_i(x, y)$ is the
coefficient of $\lambda^{i-1}$ in $(\text{ad}(\lambda x+ y))^{p-1}(x)$.
\end{defn}

Let $\ggg_\bbk$ be a restricted Lie superalgebra.  For each $x\in (\ggg_\bbk)_\bz$, the element $x^p-x^{[p]}\in U(\ggg_\bbk)$ is central by Definition \ref{restricted}, and all of which generate a central subalgebra of $U(\ggg_\bbk)$.
Let $\{w_1,\cdots,w_c\}$ and $\{w'_1,\cdots,w'_d\}$ be the basis of $(\ggg_\bbk)_\bz$ and $(\ggg_\bbk)_\bo$ respectively.
For a given $\chi\in (\ggg_\bbk)_\bz^*$,  let $J_\chi$ be the ideal of the universal enveloping algebra $U(\ggg_\bbk)$ generated by the even central elements $\bar w^p-\bar w^{[p]}-\chi(\bar w)^p$ for all $\bar w\in (\ggg_\bbk)_\bz$. The quotient algebra $U_\chi(\ggg_\bbk) := U(\ggg_\bbk)\slash J_\chi$ is called the reduced enveloping algebra with $p$-character $\chi$. We often regard $\chi\in \ggg_\bbk^*$ by letting $\chi((\ggg_\bbk)_\bo) = 0$. If $\chi= 0$, then $U_0(\ggg_\bbk)$ is called
the restricted enveloping algebra. It is a direct consequence from the PBW theorem that the $\bbk$-algebra $U_\chi(\ggg_\bbk)$ is of dimension $p^c 2^d$, and has a basis
$$\{w_1^{a_1}\cdots w_c^{a_c}(w'_1)^{b_1}\cdots(w'_d)^{b_d}
\mid 0\leqslant a_i<p, b_j \in\{0, 1\} \mbox{ for  all }1\leqslant i\leqslant c, 1\leqslant j\leqslant d\}.$$

 A supermodule $(\rho,V)$ for a restricted Lie superalgebra
$(\g_\bbk,[p])$ is said to be {\it{restricted}} if $\rho$ satisfies for all
$x\in (\g_\bbk)_{\bar 0}$
\begin{align} \rho(x)^p-\rho(x^{[p]})=0.   \label{restrictedmod} \end{align}
All restricted modules of $\g_\bbk$ constitute a full subcategory of the
$\g_\bbk$-module category, which coincides with the $U_0(\g_\bbk)$-module
category, denoted by $U_0(\g_\bbk)\hmod$.

\subsubsection{Root structural information} \label{rootsubsub}
From now on, we will always assume, throughout the remaining of the paper,  that {\sl  all base fields are $\bbk$, which is algebraically closed and of odd characteristic $p$ satisfying the condition listed in (Table 1)}. The notations involving the base field will be simplified. For example, an algebraic supergroups $G_\bbk$ and its Lie algebra $\ggg_\bbk$ will be directly denoted by $G$ and $\ggg$, with the subscript $\bbk$ being omitted.

For a given basic Lie superalgebra $\ggg$, by the arguments as above $\ggg$ is a Lie superalgebra of an algebraic supergroup $G$ whose purely even group $G_\ev$ is reductive. Fix a standard maximal torus $\frakt$ of $G_\ev$ which consists of diagonal matrices in $G_\ev$, and set $\hhh=\Lie(\frakt)$.  Then $\mathfrak{h}$ is a Cartan subalgebra of ${\ggg}$. Let $\Delta$ be a root system of ${\ggg}$ relative to $\mathfrak{h}$ whose simple root system $\Pi=\{\alpha_1,\cdots,\alpha_l\}$ is standard, corresponding to the standard diagrams in the sense of \cite[\S1.3]{CW} (\textit{cf}. \cite[Proposition 1.5]{K2}). Let $\Delta^+$ be the corresponding positive system in $\Delta$, and put $\Delta^-:=-\Delta^+$. Let ${\ggg}=\mathfrak{n}^-\oplus\mathfrak{h}\oplus\mathfrak{n}^+$ be the corresponding triangular decomposition of ${\ggg}$. There is a canonical Borel subalgebra $\bbb:=\hhh+\nnn^+$. Furthermore, $\ggg=\ggg_{\bz}+\ggg_{\bo}$, and $\ggg_{\bz}=\hhh+\sum_{\alpha\in \Delta_0}\ggg_\alpha$ and $\ggg_\bo=\sum_{\beta\in \Delta_1}\ggg_\beta$. The set of even roots $\Delta_0$ is occasionally  denoted by $\Delta_\ev$, and the set of odd roots $\Delta_1$  by $\Delta_{\text{odd}}$ for clarification.

As discussed in \S\ref{basicLiesuper}, there exists a non-degenerate even invariant super-symmetric bilinear form $(\cdot,\cdot)$ on $\ggg$, which restricts to a non-degenerate form $(\cdot,\cdot)$ on $\hhh$ and on $\hhh^*$.  Especially, $(\cdot,\cdot)$ can be defined in $\Delta$.  A root $\gamma\in \Delta$ is called isotropic if $(\alpha,\alpha)=0$ (note an isotropic root is necessarily an odd root). Following \cite{Mu1}£¬ we set

$$\overline \Delta_0:=\{\alpha\in \Delta_0\mid {\alpha\over 2}\notin \Delta_1\},\;\; \overline\Delta_1:=\{\beta\in\Delta_1\mid 2\beta\notin\Delta_0\}. $$

By Lemma 8.3.2 of \cite{Mu1}, $\overline\Delta_1$ is just the set of isotropic roots. And then the set of nonisotropic roots is
\begin{align} \label{nonisotropic}
\Delta_{\text{nonisotropic}}=\Delta_0\cup(\Delta_1\backslash \overline\Delta_1).
\end{align}




\subsection{Chevalley bases and coroots} \label{Chevalley Basis}

 Under some mild condition (the one listed in (Table 1) is sufficient),  we can write $\ggg=\ggg_\bbz\otimes_\bbz \bbk$ where $\ggg_\bbz$ is a $\bbz$-form of $\ggg$,   and $\ggg_\bbz=\hhh_\bbz+\sum_{\theta\in \Delta}(\ggg_\bbz)_\theta$  has a Chevalley basis $\{X_\alpha\in (\ggg_\bbz)_\alpha, Y_\alpha\in(\ggg_\bbz)_{-\alpha}\mid \alpha\in \Delta^+\}\cup \{H_i\in\hhh_\bbz\mid i=1,\cdots, r:=\dim\hhh\}$ satisfying the Chevalley basis axioms as presented as in \cite[\S3.2]{FG}
 (see \cite[Theorem 3.3.1]{FG}, or \cite{IK} \cite{SW}). Especially, $\ggg_\bbz=\hhh_\bbz+\sum_{\theta\in \Delta}(\ggg_\bbz)_\theta$
 such that $\{H_i\mid i=1,\cdots,r\}$ is a $\bbz$-basis of $\hhh_\bbz$, and $(\ggg_\bbz)_\alpha=\bbz X_\alpha$ and $(\ggg_\bbz)_{-\alpha}=\bbz Y_\alpha$ for $\alpha\in \Delta^+$ with $[X_\alpha,Y_\alpha]=H_\alpha$. Furthermore, associated with each root $\alpha\in \Delta^+$,  there is a vector $H_\alpha\in \hhh_\bbz$ satisfying $\hhh_\bbz=\bbz\mbox{-span}\{H_\alpha\mid \alpha\in \Delta^+\}$ and
 $(\alpha,\beta)=(H_\alpha,H_\beta)\in \bbz$ for $\alpha,\beta\in \Delta^+$, and $\alpha(H_\alpha)=2$ for any $\alpha\in \Delta^+\backslash \overline\Delta_1^+$ (\textit{cf}.  \cite[\S3.1, \S3.2]{FG}).

 Recall that the character group $X(\frakt)$ of $\frakt$
is a free abelian group of rank $r:=\dim \frakt$, identified with $\bbz^r$.
 Associated with $\alpha\in \Delta^+$, by the construction of Chevalley supergroups we have  a  one-parameter  multiplicative supersubgroup $\chi_\alpha$ such that $\chi_\alpha(t)\in \frakt$ for $t\in \bbk^{\times}:=\bbk \backslash \{0\}$ and  $\lambda(\chi_\alpha(t))=t^{\lambda(H_\alpha)}$ for $\lambda\in X(\frakt)$ (\textit{cf}. \cite[\S5.2]{FG}). By the same way as  the theory of algebraic groups (\cite[\S16.1]{Hum2} or \cite[\S{II.} 1.3]{J1}), we can assign the set of pairs of $\lambda, \chi_\alpha$ to $\bbz$  via $\langle\lambda,\chi_\alpha\rangle=\lambda(H_\alpha)$. Usually, $\chi_\alpha$ is called a coroot corresponding to $\alpha$ (\textit{cf}. \cite[\S{II.} 1.3]{J1}). One can describe precisely $X(\frakt)$ for basic classical cases  (see \cite{BruKu} and \cite{SW} for type $A, B, C$ and $D$). The following
  general fact is clear. 

\begin{lemma}\label{character gp} $\bbz\Pi\subset X(\frakt)$.
\end{lemma}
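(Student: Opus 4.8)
The plan is to show that each simple root $\alpha_i \in \Pi$ lies in $X(\frakt)$ by exhibiting it as the weight of a weight vector in the adjoint action of $\frakt$ on $\ggg$. First I would recall that $\frakt \subset G_\ev$ acts on $\ggg = \Lie(G)$ via the adjoint action described in \S\ref{2.1}(2), and that under this action $\ggg$ decomposes into weight spaces; since $\ggg = \hhh \oplus \sum_{\theta \in \Delta}\ggg_\theta$ is the root space decomposition relative to $\hhh = \Lie(\frakt)$, and the differential of the $\frakt$-action on $\ggg_\theta$ is multiplication by $\theta|_\hhh$, the space $\ggg_\theta$ is a $\frakt$-weight space with a weight $\bar\theta \in X(\frakt)$ whose differential is $\theta$. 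In particular each root, hence each simple root $\alpha_i$, arises this way, so $\alpha_i \in X(\frakt)$; since $X(\frakt)$ is a subgroup of $\hhh^*$ (after the identification $X(\frakt)\otimes_\bbz\bbk \hookrightarrow \hhh^*$ coming from $d\lambda$), we get $\bbz\Pi \subset X(\frakt)$.

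The key steps, in order, are: (i) identify the $\frakt$-module structure on $\ggg$ coming from the $G_\ev$-action and match its weight space decomposition with the $\hhh$-root space decomposition, using that $\Lie(\frakt) = \hhh$ and that differentiating a $\frakt$-weight $\lambda$ recovers $\lambda|_\hhh$; (ii) conclude that for every $\theta \in \Delta$ there is a character $\bar\theta \in X(\frakt)$ restricting to $\theta$ on $\hhh$, in particular for the $\alpha_i$; (iii) invoke that $\frakt$ is a torus so $X(\frakt)$ is a free abelian group and that $\{\bar\alpha_1,\dots,\bar\alpha_l\}$ generate a subgroup $\bbz\Pi$ inside it. Alternatively — and this is perhaps cleaner given what is already set up in \S\ref{Chevalley Basis} — one can argue directly from the one-parameter subgroups $\chi_\alpha$: each $\chi_\alpha$ is a cocharacter of $\frakt$ with $\langle \lambda, \chi_\alpha\rangle = \lambda(H_\alpha)$, and the perfect pairing between $X(\frakt)$ and the cocharacter lattice, together with $\alpha(H_\beta) = (\alpha,\beta) \in \bbz$ and $\alpha_i(H_\alpha) = 2$ for non-isotropic $\alpha$, forces each $\alpha_i$ to be an integral combination of a basis of $X(\frakt)$; one only needs that the $\chi_\alpha$ span the cocharacter lattice over $\bbq$, which follows from the non-degeneracy of $(\cdot,\cdot)$ on $\hhh$.

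The main obstacle is the subtlety caused by isotropic (odd) roots: for $\alpha \in \overline\Delta_1^+$ one does \emph{not} have $\alpha(H_\alpha) = 2$, so the usual algebraic-group normalization tying a root to "twice something" fails, and one must be careful that the character $\bar\alpha$ of $\frakt$ attached to an odd root space is genuinely well-defined rather than only defined up to $2$-torsion or only after passing to $\frac12\bbz$-coefficients. I would handle this by emphasizing that $\frakt$ is a torus \emph{inside the purely even reductive group} $G_\ev$ and that $\ggg$, including its odd part $\ggg_\bo$, is an honest rational $\frakt$-module by \S\ref{2.1}(2); hence every $\ggg_\theta$ (odd $\theta$ included) carries an honest character of $\frakt$, and no division by $2$ is ever needed. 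Once this is clear, the containment $\bbz\Pi \subset X(\frakt)$ is immediate and the proof is short.
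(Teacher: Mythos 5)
Your main argument is correct and is essentially the paper's proof: the paper simply cites \cite[\S5.2]{FG}, which is exactly the fact you spell out, namely that in the Chevalley supergroup construction $\frakt$ acts on the $\bbz$-form of $\ggg$ (odd part included) so that each root space $\ggg_\theta$ is a genuine $\frakt$-weight space, realizing every root, hence every simple root and so all of $\bbz\Pi$, as a character in $X(\frakt)$. One caution: the ``alternative'' argument via the cocharacters $\chi_\alpha$ is not sound as stated, since integrality of $\langle\lambda,\chi_\alpha\rangle$ for all $\alpha$ only places $\lambda$ in the dual of the sublattice generated by the $\chi_\alpha$, which may strictly contain $X(\frakt)$ (compare the weight lattice versus the character lattice of an adjoint-type torus), so the adjoint-action argument should be kept as the actual proof.
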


\begin{proof} It follows from \cite[\S5.2]{FG}.
\end{proof}

 \subsection{} \label{tau} There is a standard involution $\tau_0\in \Aut(\ggg)$ which interchanges $X_\alpha$ with $Y_\alpha$ for all $\Phi^+$ and stabilizes $\hhh$ with $\tau|_\hhh=-\id_\hhh$. Actually, this $\tau_0$ is the differential of an automorphism $\tau\in \Aut(G)$ which satisfies $\tau(t)=t^{-1}$ for all $t\in \frakt$. This $\tau_0$ can be realized as $\tau_0(X)=-X^\mathsf{st}$ for  $\ggg=\gl(m|n)$ where $\mathsf{st}$ means super transpose (see \cite[5.6.9, page 128]{Mu1}), and $\tau$ can be realized as $\tau(x)=(x^{-1})^\mathsf{st}$ for $G=\GL(m|n)$ and $x\in G(A)$ for any super commutative $\bbk$-algebra $A$. Obviously, $\tau$ induces $-\id$ on $X(\frakt)$.

\section{Super Weyl groups and their distinguished elements}

Keep the notations and assumptions as in \S\ref{rootsubsub}. In particular, let $\ggg$ be an any given basic Lie superalgebra over $\bbk$, with $\Lie(G)=\ggg$ and Chevalley super Harish-Chandra pair $(G_\ev,\ggg)$, and $\frakt$ be a fixed maximal torus $G_\ev$ which consists of diagonal matrices in $G_\ev$, and $\hhh=\Lie(\frakt)$.

\subsection{Odd  and real reflections} Beside real reflections associated to even roots as for semisimple Lie algebras,  one can define odd reflections associated to isotropic odd simple roots. Both real and odd reflections permute the fundamental systems of a root system.  There is a fundamental fact on an odd simple root as following.

\begin{lemma} (\cite[Lemma 1.30]{CW}) \label{oddreflem} Let $\ggg$ be a basic Lie superalgebra and maintain the notations as in \S\ref{rootsubsub}. For a given isotropic simple root $\gamma$  in  $\Pi$, set  $\Delta^+_\gamma:=\{-\gamma\}\cup \Delta^+\backslash\{\gamma\}$. Then $\Delta^+_\gamma$ is a new positive root system whose corresponding fundamental root system $\Pi_\gamma$ is given by
$$ \Pi_\gamma=\{\alpha\in\Pi\mid (\alpha,\gamma)=0,\alpha\ne\gamma\}\cup\{\alpha+\gamma\mid\alpha\in\Pi, (\alpha,\gamma)\ne 0\}\cup\{-\gamma\}.$$
\end{lemma}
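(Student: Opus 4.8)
The plan is to prove the statement in two parts: first that $\Delta^+_\gamma$ is a positive root system, and then that its base (fundamental root system) is the set $\Pi_\gamma$ described. The starting observation is that for an isotropic odd simple root $\gamma$, one has $2\gamma\notin\Delta$ (since $\gamma$ is isotropic, hence $\overline\Delta_1$, so $2\gamma\notin\Delta_0$, and $2\gamma$ is even), so the only roots proportional to $\gamma$ are $\pm\gamma$. Hence the set $\Delta^+_\gamma := \{-\gamma\}\cup(\Delta^+\setminus\{\gamma\})$ is obtained from $\Delta^+$ by flipping the sign of exactly one simple root and nothing else; in particular $\Delta^+_\gamma\,\dot\cup\,(-\Delta^+_\gamma)=\Delta$ and $\Delta^+_\gamma\cap(-\Delta^+_\gamma)=\emptyset$, so $\Delta^+_\gamma$ is indeed a choice of positive system. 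To see it is \emph{closed} enough to come from a base, I would verify directly that it is closed under addition within $\Delta$: if $\mu,\nu\in\Delta^+_\gamma$ and $\mu+\nu\in\Delta$, then $\mu+\nu\in\Delta^+_\gamma$. The only case needing attention is when $-\gamma$ is one of the summands, say $\mu=-\gamma$; then $\nu\in\Delta^+\setminus\{\gamma\}$ and $\nu-\gamma\in\Delta$. Writing $\nu=\sum_{\alpha\in\Pi}c_\alpha\alpha$ with $c_\alpha\ge 0$, the coefficient of $\gamma$ in $\nu-\gamma$ is $c_\gamma-1$; since $\nu-\gamma\in\Delta$ its coefficients all have one sign, so either $c_\gamma\ge 1$ (giving $\nu-\gamma\in\Delta^+$, and it is $\ne\gamma$ since it has a strictly smaller height, so $\nu-\gamma\in\Delta^+_\gamma$) or $c_\gamma=0$ and all other coefficients are $0$, forcing $\nu=\gamma$, excluded. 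So closedness holds.

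Next I would identify the base. A base of a positive system $\Delta^+_\gamma$ is the set of its \emph{indecomposable} elements, i.e. those $\delta\in\Delta^+_\gamma$ that cannot be written as $\delta_1+\delta_2$ with $\delta_1,\delta_2\in\Delta^+_\gamma$. Certainly $-\gamma$ is indecomposable (it has negative $\gamma$-coefficient, while a sum of two elements of $\Delta^+_\gamma$ can have $\gamma$-coefficient at most... here I must be a little careful: a sum $\delta_1+\delta_2$ could have $\gamma$-coefficient $-1+1=0$ or $-1-1=-2$; the latter is impossible since $-2\gamma\notin\Delta$, and $\gamma$-coefficient $0$ cannot equal $-1$; also $\delta_1=\delta_2=-\gamma$ gives $-2\gamma\notin\Delta$). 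For $\alpha\in\Pi$ with $(\alpha,\gamma)=0$ and $\alpha\ne\gamma$: by the standard theory of odd reflections (or by a direct check) $(\alpha,\gamma)=0$ forces $\alpha\pm\gamma\notin\Delta$ or at least $\alpha$ cannot be split off $-\gamma$; I would argue that $\alpha$ remains simple because $\alpha = \delta_1+\delta_2$ in $\Delta^+_\gamma$ would, upon expressing in $\Pi$, require one of the $\delta_i$ to involve $-\gamma$ — but $\alpha$ has $\gamma$-coefficient $0$ and the non-$\gamma$ part of $\alpha$ is a single simple root, so no nontrivial decomposition exists. For $\alpha\in\Pi$ with $(\alpha,\gamma)\ne0$: here $\alpha+\gamma\in\Delta$ (this uses $(\alpha,\gamma)\ne0$ together with $\gamma$ isotropic — precisely Kac's lemma on odd reflections, which I may invoke, or prove via $\mathfrak{sl}(1|1)$- or $\mathfrak{sl}(2|1)$-subalgebra considerations), and $\alpha+\gamma = \alpha - (-\gamma)$ shows $\alpha = (\alpha+\gamma) + (-\gamma)$ is \emph{not} in $\Delta^+_\gamma$ (it isn't, since $\alpha$ has positive $\gamma$-coefficient $0$... wait: $\alpha\in\Pi$, $\alpha\ne\gamma$, so $\alpha\in\Delta^+\setminus\{\gamma\}\subset\Delta^+_\gamma$). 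Rather, the point is that $\alpha$ is now \emph{decomposable} in $\Delta^+_\gamma$, via $\alpha = (\alpha+\gamma)+(-\gamma)$ with both summands in $\Delta^+_\gamma$, so $\alpha$ drops out of the base; and $\alpha+\gamma$ takes its place — one checks $\alpha+\gamma$ is indecomposable in $\Delta^+_\gamma$ because any splitting would again have to produce a negative $\gamma$-coefficient summand, i.e. involve $-\gamma$, leaving $\alpha$, but $\alpha\notin\Delta^+_\gamma$'s decomposition set... here I would pin it down by a height/coefficient argument in the $\Delta^+_\gamma$-coordinates.

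The cleanest organization, which I would adopt to avoid the sign bookkeeping above becoming error-prone, is to introduce the "new simple coordinates": every $\delta\in\Delta^+_\gamma$ can be written uniquely as a nonnegative integral combination of the candidate set $\Pi_\gamma$, and conversely elements of $\Pi_\gamma$ are positive (in $\Delta^+_\gamma$) and indecomposable. Concretely: $\Pi_\gamma$ spans the same lattice as $\Pi$ (since $-\gamma$ and the $\alpha+\gamma$ are obtained from $\Pi$ by an invertible integral change of basis), and one shows by induction on $\Delta^+$-height that every $\delta\in\Delta^+_\gamma$ lies in $\bbn\Pi_\gamma$, using closedness of $\Delta^+_\gamma$ under the reflections/additions just established. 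Then $|\Pi_\gamma| = |\Pi| = l = \dim\hhh - (\text{toral corrections})$, matching the rank, and a positive system in a (finite, possibly non-reduced) root system has a unique base of that cardinality consisting of indecomposables, so $\Pi_\gamma$ must be it. I expect the main obstacle to be the case analysis for roots $\alpha\in\Pi$ with $(\alpha,\gamma)\ne0$: showing $\alpha+\gamma\in\Delta$ and that it is the correct new simple root requires the structural input about odd reflections in basic Lie superalgebras (the possible values of $(\alpha,\gamma)$ and the resulting $\alpha$-string through $\gamma$), i.e. exactly the content cited from \cite[Lemma 1.30]{CW}; since we are allowed to invoke that lemma, the remaining work is the bookkeeping verification that $\Pi_\gamma$ as displayed is precisely the set of indecomposable elements of $\Delta^+_\gamma$, which is routine once the spanning and height-induction are set up. In our modular setting one should additionally remark that the arguments are characteristic-free: they use only the combinatorics of $\Delta$ and the bilinear form $(\cdot,\cdot)$ on $\Delta$, which by \S\ref{rootsubsub} is defined and integral under the conditions in (Table 1).
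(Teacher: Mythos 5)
The paper itself contains no proof of this lemma: it is quoted verbatim from \cite[Lemma 1.30]{CW}, so your proposal can only be measured against the standard verification of that result, and your overall plan — check that $\Delta^+_\gamma$ still splits $\Delta$ into opposite halves and is closed under addition, then identify its indecomposable elements — is exactly that standard route. The bookkeeping for $-\gamma$ and the closedness step are essentially fine (modulo small slips: in the excluded subcase you should get $\nu=0$, not $\nu=\gamma$, and $\nu-\gamma\ne\gamma$ because $2\gamma\notin\Delta$, not for height reasons).

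The genuine gap is the case $\alpha\in\Pi$ with $(\alpha,\gamma)=0$. Pure coefficient bookkeeping in $\Pi$-coordinates cannot show that $\alpha$ stays indecomposable in $\Delta^+_\gamma$: the candidate decomposition $\alpha=(-\gamma)+(\alpha+\gamma)$ has perfectly admissible coordinates, so $\alpha$ remains simple only because $\alpha+\gamma\notin\Delta$ when $(\alpha,\gamma)=0$ — a structural fact about isotropic roots, not a combinatorial one. (It follows from the string property for an isotropic $\gamma$: since $2\gamma\notin\Delta$ one has $(\mathrm{ad}\,X_{\pm\gamma})^2=0$, and for $x\in\ggg_\alpha$ the identity $[Y_\gamma,[X_\gamma,x]]=\pm c\,(\alpha,\gamma)x$ together with $\alpha-\gamma\notin\Delta$ and the nondegenerate invariant form forces $\ggg_{\alpha+\gamma}=0$ when $(\alpha,\gamma)=0$; the same computation yields $\alpha+\gamma\in\Delta$ when $(\alpha,\gamma)\ne0$, and also that $\alpha+2\gamma\notin\Delta$, which you need for the indecomposability of $\alpha+\gamma$.) You acknowledge needing this input, but your fallback — ``since we are allowed to invoke that lemma,'' i.e.\ \cite[Lemma 1.30]{CW} itself — is circular, because that lemma is precisely the statement being proved. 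The legitimate fix is to invoke (or reprove via the $\mathfrak{sl}(1|1)$-type computation above) the preparatory facts on $\gamma$-strings for isotropic roots that precede Lemma 1.30 in \cite{CW}; with those in hand, your indecomposability analysis and lattice/height argument do complete the proof, and your closing remark is correct that the argument is characteristic-free here, since under the restrictions of (Table 1) the root datum of $\ggg$ over $\bbk$ is inherited from the $\bbz$-form of \S\ref{Chevalley Basis}.
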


For an isotropic simple root $\gamma$ with respect to the simple root system $\Pi(\frak{B})$ of a given Borel subalgebra $\frak{B}$ (and the corresponding positive system $\Delta(\frak{B})^+$), we can write $\frak{B}=\hhh+\sum_{\theta \in\Delta(\frak{B})^+}\ggg_\theta$. The above lemma is actually to give an operator which is to transform $\frak{B}$ to a new Borel subalgebra
$$\frak{B}^\gamma:=\hhh+\sum_{\theta \in\Delta(\frak{B}^\gamma)^+}\ggg_\theta$$
 with $\Delta(\frak{B}^\gamma)^+=\Delta(\frak{B})^+_\gamma$ and $\Pi(\frak{B}^\gamma)=\Pi(\frak{B})_\gamma$. Such operation is called odd reflection (with respect to $\gamma$), which is denoted by $r_\gamma$. Obviously, $r_{-\gamma}(\frak{B}^\gamma)=\frak{B}$. We will identify $r_\gamma$ with $r_{-\gamma}$ when we deal with such a pair $\{\frak{B}, \frak{B}^\gamma\}$, which will be helpful to introduce so-called super Weyl groups (see Definition \ref{superWeyl}), not making any confusion.

For a non-isotropic root,  by (\ref{nonisotropic}) we know that it is either an even root $\alpha$ , or an odd root $\beta$ with $2\beta\in \Delta_0$. So for a given non-isotropic root $\theta$ we can set $s_\theta(\lambda)=\lambda-2(\lambda,\theta)\slash (\theta,\theta)\theta$, which becomes a linear map on $\hhh^*$.  Note that $s_\beta=s_{2\beta}$ for any $\beta\in \Delta_1\backslash \overline\Delta_1$. All of them are called real reflections. The Weyl group $W$ for $\ggg_\bz$ is just generated by those real reflections $s_\theta$ for all $\theta\in \overline\Delta_0^+\cup(\Delta_1^+\backslash\overline\Delta_1^+)$ (note that $s_\theta=s_{-\theta}$), as a subgroup of $\GL(\hhh^*)$,  which stabilize $\Delta$, $\Delta_0$ and $\Delta_1$ respectively.

In the sequent arguments, we will unify the notations of odd and real reflections, by using $\hat r_\theta$ respect to the root $\theta$.  We first have an  observation  that  if $\theta\in\Pi$ then $\hat{r}_{\theta}\Delta^+$ and $\Delta^+$ have differences by at most two roots, i.e. $\#(\hat{r}_{\theta}\Delta^-\cap\Delta^+)\leq 2$. Actually,  if either $\theta\in \Pi\cap \overline \Delta_0$, or $\theta\in  \Pi\cap \overline\Delta_1$, then $\hat{r}_{\theta}\Delta^-\cap\Delta^+=\{\theta\}$. If $\theta\in\Delta_1\backslash\overline\Delta_1$, then $\hat{r}_{\theta}\Delta^-\cap\Delta^+=\{\theta,2\theta\}$.

Thus we have a further observation which is a strengthened version of \cite[Propositoin 1.32]{CW}).

\begin{lemma}\label{basiclem1}
Let  $\Pi_{(i)}, i=1,2$ are two different simple root systems of $\mathfrak{g}$.  There exist a series of real and odd reflections $\hat{r}_1,\hat{r}_2,\cdots,\hat{r}_n$ such that $\hat w:=\hat{r}_n{\cdots}\hat{r}_2\hat{r}_1$ with $\hat w(\Pi_{(1)})=\Pi_{(2)}$. Moreover, for any $i\in\{1,2,\cdots,n\}$ there are at most two different roots between  $\hat{r}_{i+1}{\cdots}\hat{r}_2\hat{r}_1(\Delta^+_{(1)})$ and $\hat{r}_{i}{\cdots}\hat{r}_2\hat{r}_1(\Delta^+_{(1)})$.
\end{lemma}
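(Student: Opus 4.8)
The plan is to prove the statement by induction on the "distance" between the two simple root systems $\Pi_{(1)}$ and $\Pi_{(2)}$, where the distance is measured by $d(\Pi_{(1)},\Pi_{(2)}) := \#(\Delta^+_{(1)} \cap \Delta^-_{(2)})$, the number of roots that are positive with respect to $\Pi_{(1)}$ but negative with respect to $\Pi_{(2)}$. If $d = 0$, then $\Delta^+_{(1)} = \Delta^+_{(2)}$, hence $\Pi_{(1)} = \Pi_{(2)}$, and we take the empty sequence of reflections. So assume $d > 0$.

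First I would show that when $d > 0$ there exists a simple root $\theta \in \Pi_{(1)}$ lying in $\Delta^-_{(2)}$. This is the key structural point and likely the main obstacle: in the purely even (semisimple) case it is classical that $\Delta^+_{(1)} \neq \Delta^+_{(2)}$ forces some simple root of $\Pi_{(1)}$ to be $(2)$-negative, but in the super case one must handle both isotropic and non-isotropic simple roots, and in particular the fact that the relevant reflection $\hat r_\theta$ (odd reflection if $\theta$ is isotropic, real reflection otherwise) changes the positive system by at most two roots — all contained in $\{\theta, 2\theta\}$ — as recorded in the observation preceding the statement. Granting the existence of such $\theta$, I would apply $\hat r_\theta$ to $\Pi_{(1)}$, obtaining a new simple root system $\Pi'_{(1)} := \hat r_\theta(\Pi_{(1)})$ with positive system $\hat r_\theta(\Delta^+_{(1)})$. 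By the observation, $\hat r_\theta(\Delta^+_{(1)})$ differs from $\Delta^+_{(1)}$ exactly in the roots of $\{\theta\}$ (isotropic or even simple case) or $\{\theta, 2\theta\}$ (case $\theta \in \Delta_1 \setminus \overline\Delta_1$), and since these roots were in $\Delta^-_{(2)}$ they become absent from $\hat r_\theta(\Delta^+_{(1)}) \cap \Delta^-_{(2)}$ while no new $(2)$-negative root is introduced. Hence $d(\Pi'_{(1)}, \Pi_{(2)}) < d(\Pi_{(1)}, \Pi_{(2)})$.

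Now I would invoke the inductive hypothesis on the pair $(\Pi'_{(1)}, \Pi_{(2)})$: there is a sequence $\hat r_2, \dots, \hat r_n$ of real and odd reflections with $\hat r_n \cdots \hat r_2(\Pi'_{(1)}) = \Pi_{(2)}$, and at each intermediate stage consecutive positive systems differ by at most two roots. Setting $\hat r_1 := \hat r_\theta$, the composite $\hat w := \hat r_n \cdots \hat r_2 \hat r_1$ satisfies $\hat w(\Pi_{(1)}) = \Pi_{(2)}$. The "moreover" clause is then immediate: the step from $\hat r_1(\Delta^+_{(1)})$ to $\Delta^+_{(1)}$ (i.e. $i=1$, reading in the direction of the statement's indexing) changes at most two roots by the observation, and all later steps inherit the bound from the inductive hypothesis applied to $\Pi'_{(1)}$. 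I would also note that the identification $r_\gamma = r_{-\gamma}$ for odd reflections, discussed after Lemma \ref{oddreflem}, ensures each $\hat r_i$ is a legitimate generator of the super Weyl group, so the conclusion is stated in the required form. The only real work is the first claim about the existence of a $(2)$-negative simple root in $\Pi_{(1)}$, which I would establish by the standard argument adapted to the super setting: if every simple root of $\Pi_{(1)}$ were $(2)$-positive, then since every $(1)$-positive root is a non-negative integral combination of $\Pi_{(1)}$, every $(1)$-positive root would be $(2)$-positive (using that $\Delta^+_{(2)}$ is closed under the relevant non-negative combinations within $\Delta$), forcing $\Delta^+_{(1)} \subseteq \Delta^+_{(2)}$ and hence equality, contradicting $d > 0$.
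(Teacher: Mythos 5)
Your proposal is correct, and its skeleton is the same as the paper's: choose a simple root $\theta\in\Pi_{(1)}\cap\Delta^-_{(2)}$, apply the corresponding real or odd reflection, and induct, with the ``at most two roots'' clause supplied by the observation preceding the lemma. The genuine difference is the induction measure. The paper inducts on $\#(\Delta^-_{(2)}\cap\Pi_{(1)})$, the number of $(2)$-negative \emph{simple} roots of the current system, and the bulk of its proof is the case-by-case verification of the formula (\ref{Basic Relation pm}) that this number drops by exactly one at each step (real reflections handled via non-positivity of off-diagonal Cartan entries, odd reflections via the formula of Lemma \ref{oddreflem}). You instead induct on the total count $\#(\Delta^+_{(1)}\cap\Delta^-_{(2)})$, which strictly decreases for the trivial reason that the only roots whose sign changes under a simple super reflection are $\theta$ (and possibly $2\theta$), and these lie in $\Delta^-_{(2)}$ while their negatives do not; no case analysis is needed. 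In exchange you must prove that a $(2)$-negative simple root exists whenever the two positive systems differ --- the standard ``positive roots are non-negative combinations of simple roots'' argument you sketch --- a point the paper merely asserts, although it too relies on it (both to choose $\theta_1$ and, in the base case, to conclude that a system with no $(2)$-negative simple roots must equal $\Pi_{(2)}$). So your route is slightly more economical and fills a small gap the paper leaves implicit, while the paper's argument yields the sharper exact-decrement statement (\ref{Basic Relation pm}) as a by-product, which the lemma itself does not require.
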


\begin{proof} Set $\Delta^\pm_{(i)}$ to be the positive ({\textit{resp.}} negative) root system corresponding to $\Pi_{(i)}$ for $i=1,2$.  Due to  the difference between $\Pi_{(1)}$ and $\Pi_{(2)}$ we have  $\Delta_{(2)}^-\cap\Pi_{(1)}\neq\emptyset$. We verify the statement  by induction on $\#(\Delta_{(2)}^-\cap\Pi_{(1)})$. Say $\Delta_{(2)}^-\cap\Pi_{(1)}=\{\theta_1,\cdots,\theta_t\}$, and $\Pi_{(1)}=\{\theta_1,\cdots,\theta_t,\theta_{t+1},\cdots,\theta_{t+s}\}$.  
Consider $\hat{r}_1(\Pi_{(1)})$ for $\hat{r}_1:=\hat{r}_{\theta_1}$. Set $\Pi_{(21)}$ to be the new simple root system $\hat{r}_1(\Pi_{(1)})$. By the definition of real and odd reflections and the choice of $\theta_1$,  we can verify in a moment
\begin{align} \label{Basic Relation pm} \#(\Delta_{(2)}^-\cap\Pi_{(1)})-\#(\Delta_{(2)}^-\cap\Pi_{(21)})=1.
\end{align}
Now we begin to  verify the above formula, taking different cases of $\hat r_1$ into account. Note that $\{\theta_1,\cdots,\theta_t\}\subset \Delta_{(2)}^-$ while  $\{\theta_{t+1},\cdots,\theta_{t+s}\}\subset \Delta_{(2)}^+$. When $\hat r_1$ is a real reflection,  $\Pi_{(21)}=\{\hat r_1(\theta_1),\cdots,\hat r_1(\theta_t),\hat r_1(\theta_{t+1}),\cdots,\hat r_1(\theta_{t+s})\}$. Note that the row associated with $\theta_1$ of the Cartan matrix corresponding to $\Pi_1$ are non-positive integers except  the diagonal entry which is $2$ (\textit{cf}. \cite[Page 604-605]{K2}).  So the property that $\Pi_{(1)}$ is a fundamental root system implies that $\{\hat r_1(\theta_2),\cdots,\hat r_1(\theta_t)\}\subset \Delta_{(2)}^-$ while  $\{\hat r_1(\theta_{t+1}),\cdots,\hat r_1(\theta_{t+s})\}\subset \Delta_{(2)}^+$. In this case, the formula (\ref{Basic Relation pm}) is proved. When $\hat r_1$ is an odd reflection, or to say, $\theta_1$ is an isotropic simple odd root in $\Pi_{(1)}$,
$$\hat r_1(\Pi_{(1)})=\{-\theta_1\}\bigcup_{i\ne 1}\{\theta_i\mid (\theta_i,\theta_1)=0\}\bigcup_{j\ne 1}\{\theta_j+\theta_1\mid (\theta_j,\theta_1)\ne 0\}.$$
The property that $\Pi_{(1)}$ is a fundamental root system implies that $\{\overline\theta_2,\cdots,\overline\theta_t\}\subset \Delta_{(2)}^-$ while  $\{\overline\theta_{t+1},\cdots,\overline\theta_{t+s}\}\subset \Delta_{(2)}^+$ where $\overline\theta_i=\theta_i$ if $(\theta_i,\theta_1)=0$ or $\theta_i+\theta_1$ if $(\theta_i,\theta_1)\ne 0$. So in this case, the formula (\ref{Basic Relation pm}) is proved. Thus, we complete the verification of (\ref{Basic Relation pm}).

By the inductive assumption,  we can get  a series of real and odd reflections $\hat{r}_1,\hat{r}_2,\cdots,\hat{r}_n$ such that $\hat{r}_n{\cdots}\hat{r}_2\hat{r}_1(\Pi_{(1)})=\Pi_{(2)}$. As to the second statement, it follows from our inductive construction  and the observation mentioned  before the lemma.
\end{proof}

\subsection{Super Weyl groups}\footnote{Associating to any generalized root systems in Serganova's sense,  Sergeev and Veselov introduced in \cite{SerV} a notion of super Weyl groupoid. Heckenberger and Yamane introduced in \cite{HecY} a groupoid related to basic classical Lie superalgebras motivated by Serganova¡¯s work and in \cite{HecY1} the notion of the Weyl groupoid for Nichols algebras. Our super Weyl group has no direct relation with their notions.}
Denote by $\cb$  the set of all Borel subalgebras of $\ggg$ containing $\hhh$. By the definition of real and odd reflections, all of them can be regarded as transforms of the set $\cb$.
Actually, for a given $\hat r_{\theta_0}$, if it  is a real reflection, i.e. $\theta_0\in \overline\Delta_0^+\cup(\Delta_1^+\backslash\overline\Delta_1^+)$,  then $\hat r_{\theta_0}(B)= \hhh+\sum_{\theta\in \Delta(B)^+}\ggg_{s_{\theta_0}(\theta)}$ for any given Borel subalgebra $B=\hhh+\sum_{\theta\in \Delta(B)^+}\ggg_\theta$. If $\hat r_{\theta_0}$ is an odd reflection, then either $\theta_0$ or $-\theta_0$ is a simple isotropic odd root of some Borel subalgebra $B$. Denote by $\cb_{\theta_0}$ the set of such Borel subalgebras. Then $\cb_{\theta_0}$ is a subset of $\cb$ containing even number Borel subalgebras, say, $\cb_{\theta_0}=\{B_1,B_1',\cdots,B_t,B_t'\}$ such that the operation of $\hat r_{\theta_0}$ makes exchanged between $B_i$  and $B_i'$, $i=1,\cdots,t$. For any $B\in \cb\backslash \cb_{\theta_0}$, we define  an assignment of $\hat r_{\theta_0}$ which sends $B$ to itself.
Thus an odd reflection  $\hat r_\theta$  really can be regarded an transformation of $\cb$. Notice that for any isotropic root $\theta\in \overline\Delta_1^+$, there exists $w\in W$ such that $w(\theta)\in \Pi$ (\textit{cf}. \cite[Lemma 1.29]{CW}). Thereby  any isotropic $\theta$ always gives rise to an odd reflection.

\begin{defn}\label{superWeyl}  A subgroup of the transform group of $\cb$ generated by all real and odd reflections $\hat r_\theta$ is called the super Weyl group of $\ggg$, which is denoted by $\hat W$. 
\end{defn}
Proposition 1.32 of \cite{CW} shows an important property of $\hat W$ that any two fundamental systems of $\ggg$ are conjugate under $\hat W$.

It is readily seen that $\hat W$ contains  the usual Weyl group $W$ of $\ggg_0$, as a subgroup. Actually, for any $w\in W$, we regard $w$ as an element in $\hat W$, denoted by $\hat w$. Then the map $\varphi: w\mapsto \hat w$ defines  an group injective homomorphism from $W$ to $\hat W$. We only need to check that $\varphi$ is injective. This can be known that if $\hat w$ is an identity transform of $\cb$, then $\sigma(\Pi)=\Pi$. Owing to \cite[Theorem 10.3(e)]{Hum}, $w=1$. So $\hat W$ is generated by $W$ and  $\hat r_\theta$ for all $\theta\in \overline\Delta_1^+$, or to say, generated by  $\hat r_\theta$ for all $\theta\in  \Delta^+$.

We introduce a so-called extended fundamental root system $\tilde\Pi$ for $\ggg$ except type $A(m|n)$, $B(0|n)$, and $C(n)$. For those exceptional cases $A(m|n)$, $B(0|n)$ and $C(n)$, we set $\tilde\Pi=\Pi$. Recall the standard fundamental root system $\Pi$ for a basic classical Lie algebras is as follows (\textit{cf}. \cite[\S2.5]{K} or \cite[\S1.3]{CW}):
\begin{align}
&\mathfrak{spo}(2n|2m+1), m>0:
  & \Pi=\{\delta_1-\delta_2,\cdots,\delta_n-\varepsilon_1,\varepsilon_1-\varepsilon_2,\cdots,\varepsilon_{m-1}-\varepsilon_m,\varepsilon_m\};\cr
&\mathfrak{spo}(2n|2m), m\geq 2:
  &\Pi=\{\delta_1-\delta_2,\cdots,\delta_n-\varepsilon_1,\varepsilon_1-\varepsilon_2,\cdots,\varepsilon_{m-1}-\varepsilon_m,\varepsilon_{m-1}+\varepsilon_m\};\cr
&F(4):& \Pi=\{ {1\over 2}(\varepsilon_1+ \varepsilon_2+ \varepsilon_3+\delta), -\varepsilon_1, \varepsilon_1-\varepsilon_2, \varepsilon_2-\varepsilon_3\};\cr
&G(3):& \Pi=\{\delta+\varepsilon_1,   \varepsilon_2,  \varepsilon_3-  \varepsilon_2\};    \cr
&D(2,1,\alpha):& \Pi=\{ \varepsilon_1+ \varepsilon_2+\varepsilon_3, -2\varepsilon_2, -2\varepsilon_3\}.
\end{align}
We define $\tilde \Pi$ for $\ggg$ listed in  the above table:
\begin{align}
&\mathfrak{spo}(2n|2m+1), m>0:& \tilde\Pi= \{\alpha_0:=-2\delta_1\}\cup \Pi ;\cr
&\mathfrak{spo}(2n|2m), m\geq 2: & \tilde\Pi=\{\alpha:=-2\delta_1\}\cup \Pi ;\cr
&F(4):& \tilde\Pi=\{\alpha_0:=-\delta\}\cup\Pi;\cr
&G(3):& \tilde\Pi=\{\alpha_0:=-2\delta\}\cup\Pi;    \cr
&D(2,1,\alpha):& \tilde\Pi=\{\alpha_0:=-2\varepsilon_1\}\cup \Pi.
\end{align}
Then for the above non-exceptional cases, one has an extended standard Dynkin diagram corresponding to $\tilde\Pi$ which is such a diagram extending the standard Dynkin diagram associated with $\Pi$, by adding a vertex $\alpha_0$ connected to the first vertex of the standard Dynkin diagram.

Obviously, all real and odd reflections $\hat r_\theta$ satisfy $\hat r_\theta^2=\text{id}$. All of them are called {\sl super reflections}. A super reflection is called a simple one if it arises from a simple root of $\tilde\Pi$.
 We have further that  $\hat W$ is generated by all simple super reflections, in the same way as in the classical Lie algebras  case.

\begin{lemma} \label{simple ref gen}  The super Weyl group $\hat W$ is generated by the simple super reflections, this to say  $\hat W=\langle \hat r_\theta\mid \theta\in \tilde\Pi\rangle$.
\end{lemma}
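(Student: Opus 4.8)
The plan is to show that the subgroup $\hat W':=\langle \hat r_\theta\mid \theta\in\tilde\Pi\rangle$ already contains every super reflection $\hat r_\gamma$ for $\gamma\in\Delta^+$, since by the remarks preceding Definition~\ref{superWeyl} these generate all of $\hat W$. We separate the even and odd cases. For the even part, the real reflections $s_\theta$ with $\theta\in\overline\Delta_0^+\cup(\Delta_1^+\setminus\overline\Delta_1^+)$ generate the ordinary Weyl group $W$, and $W$ is generated by the simple reflections attached to $\Pi$ together with, in the non-exceptional extended cases, the reflection in the added affine-type root $\alpha_0$ (which is itself a real reflection in a long even root); so $W\subseteq\hat W'$ by the classical theory of Coxeter/Weyl groups, once one checks that $\alpha_0$ as defined is indeed a (real) root or at worst that $\hat r_{\alpha_0}$ is realized inside $\hat W'$. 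For the odd part, I would invoke \cite[Lemma 1.29]{CW} as cited in the text: for any isotropic $\theta\in\overline\Delta_1^+$ there is $w\in W$ with $w(\theta)\in\Pi$; hence $\hat r_\theta=\hat w^{-1}\hat r_{w(\theta)}\hat w$, where $\hat r_{w(\theta)}$ is a simple odd reflection and $\hat w\in W\subseteq\hat W'$, so $\hat r_\theta\in\hat W'$.

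The one point needing genuine care is the \emph{conjugation identity} $\hat r_\theta=\hat w^{-1}\hat r_{w(\theta)}\hat w$ inside the transformation group of $\cb$: unlike the linear setting on $\hhh^*$, here reflections act on Borel subalgebras, and one must verify that conjugating the odd reflection operator $r_{w(\theta)}$ by the transformation $\hat w$ yields exactly the operator $r_\theta$, i.e. that $\hat w$ intertwines the pairing-up of Borels $\cb_{w(\theta)}\leftrightarrow$ and $\cb_\theta$. This is essentially the statement that $\hat w$ sends a Borel $B$ with simple isotropic root $\theta$ to one with simple isotropic root $w(\theta)$, and commutes appropriately with the odd-reflection swap; it follows from the definition of the $\hat W$-action on $\cb$ (a real reflection $\hat w$ sends $B=\hhh+\sum_{\theta\in\Delta(B)^+}\ggg_\theta$ to $\hhh+\sum\ggg_{w(\theta)}$, so positive systems and their simple subsets are transported compatibly) together with Lemma~\ref{oddreflem}. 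I expect this bookkeeping — tracking how simple odd roots and the associated Borel-swaps behave under the $W$-action — to be the main (though routine) obstacle; everything else reduces to the classical generation statement for $W$ and to \cite[Lemma 1.29]{CW}.

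A clean way to organize the write-up: first record that $\hat W'\supseteq W$ using that $W=\langle s_\alpha\mid\alpha\in\Pi\rangle$ (classical) and that each simple $s_\alpha$ with $\alpha\in\Pi\subseteq\tilde\Pi$ is a simple super reflection, noting the extra generator $\hat r_{\alpha_0}$ is harmless since it only enlarges $\hat W'$; then for each isotropic $\theta\in\overline\Delta_1^+$ pick $w\in W$ with $w(\theta)\in\Pi$ via \cite[Lemma 1.29]{CW}, observe $w(\theta)$ is then a simple isotropic odd root (so $\hat r_{w(\theta)}$ is a simple super reflection, listed in $\tilde\Pi$), and conclude $\hat r_\theta=\hat w^{-1}\,\hat r_{w(\theta)}\,\hat w\in\hat W'$ by the conjugation identity above; finally cite the paragraph after Definition~\ref{superWeyl} that $\hat W=\langle W,\ \hat r_\theta:\theta\in\overline\Delta_1^+\rangle$ to get $\hat W=\hat W'$. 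I would state the conjugation identity as a one-line sub-lemma or inline remark, proved directly from the explicit description of how $\hat w$ and $\hat r_{\theta}$ act on $\cb$, so that the main proof stays short.
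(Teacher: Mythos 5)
Your proposal is correct and follows essentially the same route as the paper: first $W\subseteq\langle \hat r_\theta\mid\theta\in\tilde\Pi\rangle$ via the (extended) simple system, then reduction of every odd reflection to the simple isotropic one by $W$-conjugacy using \cite[Lemma 1.29]{CW}, and finally the generation statement $\hat W=\langle W,\hat r_\theta:\theta\in\overline\Delta_1^+\rangle$. The only difference is that you spell out the conjugation identity $\hat w\hat r_\theta\hat w^{-1}=\hat r_{w(\theta)}$ on $\cb$, which the paper uses implicitly — a worthwhile (and correctly verified) addition, not a deviation in method.
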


\begin{proof} We have known that $\hat W$ is generated by $W$ and  $\hat r_\theta$ for  $\theta\in  \overline \Delta_1^+$.
 Note that $W$ is generated by the simple refections corresponding to the fundamental system of $\ggg_\bz$ which is contained in the extended standard simple root system $\tilde\Pi$. Hence
$W\subset  \langle \hat r_\theta\mid \theta\in \tilde\Pi\rangle$. On the other hand, checking  the standard Dynkin diagrams listed in \cite[\S1.3]{CW} (\textit{cf}. \cite[Proposition 1.5]{K2}), we know that there is only one isotropic simple root in the standard simple root system $\Pi$. So the remaining thing is to check that any two isotropic positive  roots $\gamma_i$, $i=1,2$ are conjugate by some $w\in W$.   By \cite[Lemma 1.29]{CW}, there exist $w_i\in W$ such that $w_i(\gamma_i)\in \Pi$. So $w_1(\gamma_1)=w_2(\gamma_2)$. Hence $\gamma_i$ are conjugate under $W$. The proof is completed.
\end{proof}

Thus $\hat W$ is actually a quotient of some Coxeter group associated with the generator system associated with $\tilde\Pi$. So one can study the super Weyl group by the Coxeter group theory. One can study the presentation of $\hat W$ (\textit{cf}. \cite{ChSh}), the reduced expressions and lengths of any elements, and the relation between the corresponding Hecke algebras, and representations of $\ggg$, which will be done  somewhere else.

\begin{example} In the case $\ggg=\gl(1|2)$, partially using the notations from \cite{CW} we have $\Delta_0=\{\pm(\epsilon_1-\epsilon_2)\}$, and $\Delta_1=\{\pm(\delta_1-\epsilon_1),\pm(\delta_1-\epsilon_2)\}$.  There are 6 Borel subalgebras corresponding to 6 systems of simple roots $\{\Pi_i\mid i=1,\cdots,6\}$:
\begin{align*}
&\Pi_1=\{\delta_1-\epsilon_1, \epsilon_1-\epsilon_2\},\;\; \Pi_2=\{\epsilon_1-\delta_1, \delta_1-\epsilon_2\},\;\;\Pi_3=\{\epsilon_1-\epsilon_2, \epsilon_2-\delta_1\}\cr
&\Pi_4=\{\delta_1-\epsilon_2, \epsilon_2-\epsilon_1\},\;\;\Pi_5=\{\epsilon_2-\delta_1, \delta_1-\epsilon_1\},\;\;\Pi_6=\{\epsilon_2-\epsilon_1, \epsilon_1-\delta_1\}
 \end{align*}
 With the numbers indexing those fundamental root systems, the super Weyl group can be regarded as a subgroup of $\mathfrak{S}_6$ presented precisely as follows:
$$\hat W=\langle \hat r_{\delta_1-\epsilon_1}=(12)(56),\; \hat r_{\epsilon_1-\epsilon_2}=((14)(25)(36)\rangle$$
which is the Coxeter group of type $G_2$.

\end{example}

\subsubsection{Standard reduced expressions and the standard lengths of elements in the super Weyl groups}
For a given element $\hat w$ of $\hat W$, set $\Pi'=\hat w(\Pi)$. Note that even if $\hat w\ne \id$, it could happen that $\Pi'=\Pi$. And
$\Pi'\ne \Pi$ if only if $\Phi'^{-}\cap \Pi\ne\emptyset$, where $\Phi'^{-}$ means the negative root system corresponding to $\Pi'$. Two elements $\hat w_i$, $i=1,2$ are called isogenic if $\hat w_1(\Pi)=\hat w_2(\Pi)$. Denote by {\sl the isogeny class} $\hat W(\Pi,\Pi')$ of $\hat w$ which consists of all $\hat\sigma$ sending $\Pi$ into $\Pi'=\hat w(\Pi)$. In $\hat W(\Pi,\Pi')$,  by Lemma \ref{simple ref gen} we can take a representative $\sigma$ such that
 \begin{itemize}
 \item[(1)] $\hat\sigma=\hat r_n\cdots\hat r_{1}$ for $\hat r_i=\hat r_{\theta_i}$.
 \item[(2)]  $\theta_{i}\in \Pi_{i-1}$ where $\Pi_{i-1}=\hat r_{i-1}\cdots \hat r_1(\Pi)$ for $i=1,\cdots,n$, and $\Pi_n=\Pi'$.
 \end{itemize}
Denote by $\ell_\Pi(\hat w)$ the smallest one among all possible $n$ as above, called {\sl the standard length of $\hat w$}.  The expression of $\hat \sigma=\hat r_n\cdots\hat r_{1}$  is called {\sl standard reduced} if it matches  the above (1) (2) with $n=\ell_\Pi(\hat w)$. In this case, we call $\sigma$ is a {\sl standard representative element of $\hat W(\Pi,\Pi')$}. Generally, $\hat W(\Pi,\Pi')$ is not necessarily to be a subgroup. But $\hat W(\Pi,\Pi)$ is a subgroup of $\hat W$.

For any given simple root system $\Pi'$ different from $\Pi$, we will introduce the distance between $\Pi'$ and $\Pi$, denoted by $\mathsf{d}(\Pi,\Pi')$.
\begin{defn}  Define $\mathsf{d}(\Pi,\Pi')=\#(\Delta'^-\cap\Delta^+) -\#(((\Delta'^-\cap\Delta^+)\cap \Delta_1^+ )\backslash ((\Delta'^-\cap\Delta^+)\cap \overline\Delta_1^+))$.
\end{defn}

Obviously,  $\mathsf{d}(\Pi,\Pi')=\mathsf{d}(\Pi',\Pi)$.
Furthermore, $\mathsf{d}(\Pi,\Pi')=0$ if only if $\Delta'^-\cap\Delta^+=\emptyset$, i.e. $\Delta'^+=\Delta^+$, or to say $\Pi=\Pi'$. And $\mathsf{d}(\Pi,\Pi')=1$ if and only if there is one simple super reflection $\hat r_\theta$ with $\theta\in \Delta'^-\cap \Pi$ such that $\Pi'=\hat r_\theta(\Pi)$ for $\theta\in \tilde\Pi$. If $\Pi'\ne \Pi$,  then $\mathsf{d}(\hat r_\theta(\Pi),\Pi')=\mathsf{d}(\Pi,\Pi')-1$ if only if  and  $\theta\in \Delta'^-\cap \Pi$.

\begin{prop} \label{length prop} For any given $\hat w\in \hat W$ and $\Pi'=\hat w(\Pi)$,  $\ell_\Pi(\hat w)= \mathsf{d}(\Pi,\Pi')$. Hence the expression of $\hat w$ in Lemma \ref{basiclem1} is standard reduced, while $\hat w$ is a standard representative element.
    \end{prop}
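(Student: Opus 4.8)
The plan is to prove the two inequalities $\ell_\Pi(\hat w)\ge\mathsf{d}(\Pi,\Pi')$ and $\ell_\Pi(\hat w)\le\mathsf{d}(\Pi,\Pi')$ separately, both resting on a single structural lemma that generalizes the remark recorded just before the Proposition: for \emph{any} simple root system $\Sigma$ with $\Sigma\ne\Pi'$ and \emph{any} root $\theta$ simple with respect to $\Sigma$, one has $\mathsf{d}(\hat r_\theta(\Sigma),\Pi')=\mathsf{d}(\Sigma,\Pi')-1$ if $\theta\in\Delta'^{-}$ and $\mathsf{d}(\hat r_\theta(\Sigma),\Pi')=\mathsf{d}(\Sigma,\Pi')+1$ if $\theta\in\Delta'^{+}$. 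The cited remark is the case $\Sigma=\Pi$, $\theta\in\tilde\Pi$; the point is to establish the same statement for a \emph{moving} simple system.

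To prove this lemma I would use that a root $\theta$ simple with respect to $\Sigma$ is indecomposable in $\Delta(\Sigma)^{+}$, so $\theta/2\notin\Delta$, together with the fact that no direct summand of $\ggg_\bz$ is of type $BC$; then $\theta$ falls into exactly one of the cases $\theta$ even (hence reduced), $\theta\in\overline\Delta_1$ (isotropic odd), or $\theta$ non-isotropic odd with $2\theta\in\Delta_0$. By Lemma \ref{oddreflem} in the second case and the description of real reflections in the other two, $\Delta(\hat r_\theta(\Sigma))^{+}$ is obtained from $\Delta(\Sigma)^{+}$ by replacing the single scaling class $\{\theta\}$ (first two cases) or $\{\theta,2\theta\}$ (third case) by its negative, nothing else changing sign — this is the content, applied to a general current system, of the elementary fact $\hat r_\theta\Delta^{-}\cap\Delta^{+}\subseteq\{\theta,2\theta\}$ noted before Lemma \ref{basiclem1}. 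Unwinding the definition $\mathsf{d}(\Sigma,\Pi')=\#(\Delta(\Sigma)^{+}\cap\Delta'^{-})-\#(\text{non-isotropic odd roots therein})$, every scaling class lying inside $\Delta(\Sigma)^{+}\cap\Delta'^{-}$ contributes exactly $1$ to $\mathsf{d}$ (a non-isotropic pair $\{\theta,2\theta\}$ contributing $2-1=1$, just like a single reduced root), and since $\theta$ and $2\theta$ always lie on the same side of $\Delta'$, the class of $\theta$ contributes to $\mathsf{d}(\Sigma,\Pi')$ precisely when $\theta\in\Delta'^{-}$ and to $\mathsf{d}(\hat r_\theta(\Sigma),\Pi')$ precisely when $\theta\in\Delta'^{+}$; this gives the change by $\mp1$. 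I also record the companion fact, needed for the upper bound: if $\Sigma\ne\Sigma'$ then $\Sigma\cap\Delta(\Sigma')^{-}\ne\emptyset$ — otherwise $\Sigma\subseteq\Delta(\Sigma')^{+}$ forces $\Delta(\Sigma)^{+}\subseteq\Delta(\Sigma')^{+}$, hence equality since each is a half of $\Delta$, hence $\Sigma=\Sigma'$.

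For the lower bound, take any standard-type expression $\hat w=\hat r_n\cdots\hat r_1$, so $\hat r_i=\hat r_{\theta_i}$ with $\theta_i$ simple in $\Pi_{i-1}=\hat r_{i-1}\cdots\hat r_1(\Pi)$ and $\Pi_n=\Pi'$. By the rigidity of each $\hat r_{\theta_i}$ at most one scaling class changes sign per step, so $|\mathsf{d}(\Pi_i,\Pi')-\mathsf{d}(\Pi_{i-1},\Pi')|\le1$; since this value drops from $\mathsf{d}(\Pi,\Pi')$ to $\mathsf{d}(\Pi',\Pi')=0$, we get $n\ge\mathsf{d}(\Pi,\Pi')$, hence $\ell_\Pi(\hat w)\ge\mathsf{d}(\Pi,\Pi')$. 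For the upper bound I would show, by induction on $d=\mathsf{d}(\Sigma,\Sigma')$, that any pair of simple systems $(\Sigma,\Sigma')$ at distance $d$ is joined by a standard-type chain of $d$ super reflections: for $d=0$ we have $\Sigma=\Sigma'$ and take the empty chain; for $d\ge1$ pick $\theta\in\Sigma\cap\Delta(\Sigma')^{-}$ by the companion fact, observe $\mathsf{d}(\hat r_\theta(\Sigma),\Sigma')=d-1$ by the lemma, apply the inductive hypothesis to $(\hat r_\theta(\Sigma),\Sigma')$, and prepend $\hat r_\theta$. Taking $(\Sigma,\Sigma')=(\Pi,\Pi')$ produces an element of the isogeny class $\hat W(\Pi,\Pi')$ with a standard-type expression of length $d$, so $\ell_\Pi(\hat w)\le\mathsf{d}(\Pi,\Pi')$. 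The two inequalities give the equality; and because this chain is exactly the one built in Lemma \ref{basiclem1} (reflect at each stage in a simple root of the current system that is negative for $\Pi'$), and has the minimal length $\mathsf{d}(\Pi,\Pi')=\ell_\Pi(\hat w)$, that expression is standard reduced and the corresponding element is a standard representative of $\hat W(\Pi,\Pi')$.

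The main obstacle I anticipate is the structural lemma — in particular the verification that $\hat r_\theta$ flips exactly one scaling class when $\theta$ is simple for a moving simple system (not merely for the fixed $\Pi$ with $\theta\in\tilde\Pi$), together with the correct bookkeeping of the non-isotropic-odd discount in $\mathsf{d}$ across all three reflection types; once that is in place, both inequalities follow by the routine telescoping and induction above.
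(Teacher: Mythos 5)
Your proposal is correct and follows essentially the same route as the paper: the upper bound $\ell_\Pi(\hat w)\le\mathsf{d}(\Pi,\Pi')$ comes from the greedy chain construction (the paper cites Lemma \ref{basiclem1} for this; you re-derive it by induction on $\mathsf{d}$), and the lower bound from the fact that a single simple super reflection changes $\mathsf{d}(\cdot,\Pi')$ by exactly one, which the paper asserts in the remark preceding the proposition and exploits by peeling off the last reflection, while you telescope over all steps. Your explicit scaling-class proof of that $\pm 1$ fact for a moving simple system makes rigorous a point the paper leaves largely implicit, but it is the same underlying argument.
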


  \begin{proof} By Lemma \ref{basiclem1}, $\ell_\Pi(\hat w)\leq \mathsf{d}(\Pi,\Pi')$. So we only need to prove $n:=
  \ell_\Pi(\hat w)\geq d:=\mathsf{d}(\Pi,\Pi')$.
  Take a  standard representative element $\hat \sigma\in \hat W(\Pi,\Pi')$, and its reduced expression  $\hat\sigma=\hat r_n\cdots\hat r_{1}$ for $\hat r_i=\hat r_{\theta_i}$ and $n=\ell_\Pi(\hat w)$.
  We now prove the statement by induction on $n$.
 If $n=0$ then $\hat\sigma=\id$, and $\Pi=\Pi'$. We don't need do anything. Assume that  $n>0$, and that the statement holds for the case
 when less then $n$.  Notice that $\Pi'=\hat \sigma(\Pi)=\hat r_n\cdots\hat r_{1}(\Pi)=\hat r_n(\Pi_{n-1})$.
  By the inductive hypothesis, $n-1\geq \mathsf{d}(\Pi,\Pi_{n-1})$.
And
$\mathsf{d}(\Pi,\Pi')= \mathsf{d}(\Pi,\hat r_{\theta_n}(\Pi_{n-1}))= \mathsf{d}(\Pi,\Pi_{n-1})+1$ if and only if $\theta_n\in \Delta^+\cap (-\Pi')$.
In the other case, the standard reduced property of $\hat \sigma=\hat r_n\cdots\hat r_{1}$  implies that $\mathsf{d}(\Pi,\Pi')= \mathsf{d}(\Pi,\hat r_{\theta_n}(\Pi_{n-1}))= \mathsf{d}(\Pi,\Pi_{n-1})-1$. So in any case, $n-1\geq \mathsf{d}(\Pi,\Pi_{n-1})=d-1$, or $d+1$, thereby $n\geq d$. Combining the above arguments, we complete the proof for the case equal to $n$, thereby  the proof of the relation $\ell_\Pi(\hat w)\geq \mathsf{d}(\Pi,\Pi')$.
\end{proof}

\begin{lemma}\label{different pi}
  Maintain the notations as in Lemma \ref{basiclem1}.  Then, for $0<j<i\leq n$, $\hat{r}_i$ is different from $\hat r_j$ where  $n=\mathsf{d}(\Pi,\Pi')$.
     \end{lemma}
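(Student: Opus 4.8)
The plan is to argue by contradiction, exploiting the minimality built into the standard reduced expression together with the length formula $\ell_\Pi(\hat w)=\mathsf{d}(\Pi,\Pi')$ of Proposition \ref{length prop}. Suppose $\hat r_i=\hat r_j=\hat r_\theta$ for some $0<j<i\le n$; write $\hat w=\hat r_n\cdots\hat r_1$ for the standard reduced expression coming from Lemma \ref{basiclem1}, and set $\Pi_k=\hat r_k\cdots\hat r_1(\Pi)$ as usual, so $\Pi_0=\Pi$ and $\Pi_n=\Pi'$. The idea is that a repeated reflection can be "cancelled": each application of $\hat r_\theta$ either increases or decreases the distance $\mathsf{d}(\Pi,-)$ by exactly $1$ (by the remarks following the definition of $\mathsf{d}$, i.e. $\mathsf{d}(\hat r_\theta(\Pi_k),\Pi')=\mathsf{d}(\Pi_k,\Pi')\pm 1$ according to whether $\theta$ moves from the positive to the negative side of $\Pi'$ or vice versa). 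Since the whole expression is standard reduced, Proposition \ref{length prop} forces $\mathsf{d}(\Pi,\Pi_k)=k$ for every $k$, so in fact each step must strictly increase the distance: $\mathsf{d}(\Pi,\Pi_{k})=\mathsf{d}(\Pi,\Pi_{k-1})+1$ for all $k$.

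Now focus on the two steps indexed $j$ and $i$ where the same super reflection $\hat r_\theta$ is applied. At step $j$ we have $\theta_j=\theta$ a simple root of $\Pi_{j-1}$, and applying $\hat r_\theta$ increases the distance to $\Pi'$ from $n-(j-1)$ to $n-j$... more to the point, I would track the status of the root $\theta$ (and its double $2\theta$, if $\theta\in\Delta_1^+\setminus\overline\Delta_1^+$, or of $-\theta$ in the odd case, per the case analysis recorded before Lemma \ref{basiclem1}) relative to the running positive systems $\Delta(\Pi_k)^+$. The key point is that $\hat r_\theta$ toggles exactly which of $\pm\theta$ (and $\pm 2\theta$) lies in $\Delta(\Pi_k)^+$: before step $j$, $\theta$ is a simple positive root of $\Pi_{j-1}$; after step $j$, $-\theta$ is simple positive in $\Pi_j$. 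For $\hat r_i$ to again equal $\hat r_\theta$ we need either $\theta$ or $-\theta$ to be a simple isotropic odd (or even) root of $\Pi_{i-1}$; chasing the parity of how many times a reflection whose "moved root set" is $\{\theta\}$ (or $\{\theta,2\theta\}$) can have been applied between steps $j$ and $i$, together with the fact that the total multiset $\{\hat r_k\cdots\hat r_{j+1}(\Delta^-_{(j)})\cap\Delta^+_{(j)}\}$ counts are controlled by $\mathsf d$, will show that the pair of steps $j$ and $i$ between them contribute a net change of $0$ (or $-2$) to the relevant distance, contradicting that every single step strictly increases $\mathsf{d}(\Pi,-)$.

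More concretely, I would phrase the contradiction this way: the composite $\hat r_i\cdots\hat r_{j+1}\hat r_j$ sends $\Pi_{j-1}$ to $\Pi_i$, and since it begins and ends with $\hat r_\theta$, one can rewrite $\hat r_i\cdots\hat r_{j+1}\hat r_j=\hat r_\theta(\hat r_{i-1}\cdots\hat r_{j+1})\hat r_\theta$; conjugating the middle block by $\hat r_\theta$ yields an alternative sequence of reflections (the $\hat r_\theta$-conjugates of $\hat r_{i-1},\dots,\hat r_{j+1}$, each again a real or odd reflection in the appropriate root) taking $\Pi_{j}\to\cdots\to\hat r_\theta(\Pi_i)$, and hence a new expression for $\hat w$ of length $n-2$ realizing the same $\Pi\to\Pi'$ transform, contradicting $\ell_\Pi(\hat w)=n=\mathsf{d}(\Pi,\Pi')$ together with the lower bound $\ell_\Pi(\hat w)\ge\mathsf{d}(\Pi,\Pi')$. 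I expect the main obstacle to be the bookkeeping in the non-reduced root cases ($\theta\in\Delta_1^+\setminus\overline\Delta_1^+$, so $\hat r_\theta=\hat r_{2\theta}$ moves the pair $\{\theta,2\theta\}$) and in the odd-reflection case (where $\hat r_\theta$ is only defined on the subset $\cb_\theta$ of Borel subalgebras and the "same reflection" might re-enter via $-\theta$): verifying that the conjugation $\hat r_\theta(-)\hat r_\theta$ indeed carries simple super reflections of $\Pi_{j-1},\dots$ to simple super reflections of $\Pi_j,\dots$, and that it genuinely shortens the expression rather than merely rearranging it, is where the care is needed. If the conjugation argument proves awkward to make rigorous for odd reflections, the fallback is the direct distance-counting argument of the previous paragraph, tracking the sign of $\theta$ through all intermediate $\Pi_k$ and using that $\mathsf d$ strictly increases at every step.
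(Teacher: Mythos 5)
Your primary route---rewriting $\hat r_i\cdots\hat r_j=\hat r_\theta(\hat r_{i-1}\cdots\hat r_{j+1})\hat r_\theta$ and conjugating the middle block to get an expression of length $n-2$---has a genuine gap, and it is exactly the one you flag at the end. The quantity $\ell_\Pi$ that Proposition \ref{length prop} computes is a minimum over \emph{standard} expressions, i.e.\ those in which each $\theta_k$ is simple for the running system $\Pi_{k-1}$; an arbitrary word in super reflections carrying $\Pi$ to $\Pi'$ is not counted by it. A conjugate $\hat r_\theta\hat r_{\theta_k}\hat r_\theta$ is (for real reflections) the reflection in $\hat r_\theta(\theta_k)$, which is in general not simple for the relevant intermediate system, and when odd reflections are involved it is merely a transformation of $\cb$ with no argument available that it is again a super reflection attached to a root. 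Since the paper establishes no exchange or deletion condition for $\hat W$ (it only remarks that $\hat W$ is a quotient of a Coxeter group), producing a shorter but non-standard word does not contradict $\ell_\Pi(\hat w)=\mathsf{d}(\Pi,\Pi')$. So, as written, this route does not close.

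Your fallback, however, is the right argument and is essentially the paper's own proof, run from the other end. The paper uses the construction of Lemma \ref{basiclem1}: each $\theta_k$ is chosen in $(\Delta')^-\cap\Pi_{k-1}$, and the simple super reflection at $\theta_k$ moves only $\theta_k$ (and $2\theta_k$) to the negative side while returning only $-\theta_k$ (and $-2\theta_k$), which lie in $(\Delta')^+$; hence $\theta_1,\dots,\theta_{i-1}$ and their doubles remain in $\Delta_{i-1}^-$, whereas $\theta_i\in\Pi_{i-1}\subset\Delta_{i-1}^+$, so $\hat r_i\ne\hat r_j$. Your version replaces ``all $\theta_k\in(\Delta')^-$'' by ``all $\theta_k\in\Delta^+$'', deduced from the fact that $\mathsf{d}(\Pi,\Pi_k)$ changes by exactly $\pm1$ at each step and must reach $n=\mathsf{d}(\Pi,\Pi')$ after $n$ steps, hence increases by $1$ every time; that deduction is fine and the same sign-tracking then goes through. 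Two points to make explicit if you write it up: distinctness of the roots $\theta_i,\theta_j$ is not by itself enough, since $\hat r_\theta=\hat r_{-\theta}$ and $\hat r_\theta=\hat r_{2\theta}$ for $\theta\in\Delta_1\setminus\overline\Delta_1$, so you must also exclude $\theta_i\in\{-\theta_j,\pm2\theta_j,\pm\tfrac12\theta_j\}$ (positivity of all $\theta_k$ kills the negative options, persistence of $\theta_j,2\theta_j$ in $\Delta_{i-1}^-$ and simplicity of $\theta_i,\theta_j$ kill the rest); and the ``net change $0$ or $-2$'' parity bookkeeping is unnecessary once you have the strict increase---the direct sign argument is cleaner and is what the paper does.
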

\begin{proof}
   For convenience we denote $\Pi=\Pi_0,~\Pi_{i-1}=\hat{r}_{i-1}{\cdots}\hat{r}_2\hat{r}_1(\Pi)$, one can choose $\theta_1\in(\Delta')^-\cap{\Pi_0}, \theta_i\in(\Delta')^-\cap{\Pi_{i-1}}$. Because for any integral number $0<i\leq{n}$ $\theta_{i-1}.\cdots,\theta_1\in\Delta_{i-1}^-$ but $\theta_i\in\Pi_{i-1}\subset\Delta_{i-1}^+$ so we can have that for any integral number $0<i\leq{n}$ $\hat{r}_i$ is different with $\hat{r}_{j}$ for any integral number $0<j<i$.
\end{proof}

\subsection{Distinguished elements}

\begin{defn} An element $\hat\sigma$ of $\hat W$ is called a distinguished element if $\hat\sigma$ is a standard representative element of $\hat W(\Pi, -\Pi)$.
\end{defn}

\begin{theorem}\label{longest element thm}
 Keep the notations as above. Then there exists a distinguished element $\hat w_0=\hat r_N \cdots \hat r_1\in \hat W$ with $\hat r_i=\hat r_{\theta_i}$ for a series of positive roots $\theta_i, i=1,\cdots,N$,  satisfying the following axioms:
\begin{itemize}
\item[(1)] $\theta_i$ is in the simple root system of the Borel subalgebra corresponding to  $\hat \sigma_{i-1}(\Delta^+)$.
Here $\hat\sigma_0:={\id}$, $\hat\sigma_{i-1}=\hat r_{i-1}\cdots \hat r_1$ $(i>0)$. And  $\hat\sigma_{i-1}(\Delta^+)$ stands for the positive root system after a series of odd and real reflections  $\hat r_1,\cdots,\hat r_{i-1}$.

\item[(2)]  $\hat w_0(\Pi)=-\Pi$ and $\hat w_0(\Delta^+)=-\Delta^+$.
\item[(3)] For $\hat \sigma_i=\hat r_i\cdots\hat r_1$, the following holds
\begin{align*}
\hat\sigma_i(\Delta^+)=\begin{cases}
(\hat\sigma_{i-1}(\Delta^+)\backslash\{\theta_i\})\cup\{-\theta_i\}, &\mbox{ if }\theta_i\in \hat\sigma_{i-1}(\Delta^+)\cap\overline\Delta_0;\cr
(\hat\sigma_{i-1}(\Delta^+)\backslash\{\theta_i\})\cup\{-\theta_i\}, &\mbox{ if }\theta_i\in \hat\sigma_{i-1}(\Delta^+)\cap\overline\Delta_1;\cr
(\hat\sigma_{i-1}(\Delta^+)\backslash\{2\theta_i,\theta_i\})\cup\{-2\theta_i,-\theta_i\}, &\mbox{ if }\theta_i\in \hat\sigma_{i-1}(\Delta^+)\cap
(\Delta_1\backslash \overline\Delta_1).
\end{cases}
\end{align*}
\item[(4)] $N=\#\Delta^+-\#(\Delta_1^+\backslash\overline\Delta^+_1)$.
\item[(5)] $\Delta^+=\{ \theta_i\mid i=1,2\cdots,N\}\cup \{2\theta_i\mid \theta_i \in (\Delta_1^+\slash\overline\Delta_1^+)\cap\sigma_{i-1}(\Delta^+), i=1,\cdots,N\}$.
\end{itemize}
\end{theorem}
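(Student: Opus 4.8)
The plan is to construct $\hat w_0$ explicitly by the same greedy/inductive procedure as in Lemma~\ref{basiclem1}, applied to the pair of simple systems $\Pi_{(1)}=\Pi$ and $\Pi_{(2)}=-\Pi$, and then to verify that the resulting expression has all the claimed combinatorial properties. First I would invoke Proposition~\ref{length prop}: since $\hat W$ acts transitively on fundamental systems (the comment after Definition~\ref{superWeyl}), there is $\hat w\in\hat W$ with $\hat w(\Pi)=-\Pi$, and by that proposition the expression $\hat w=\hat r_N\cdots\hat r_1$ produced by Lemma~\ref{basiclem1} is standard reduced with $N=\ell_\Pi(\hat w)=\mathsf{d}(\Pi,-\Pi)$. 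Taking $\hat w_0$ to be this standard representative of $\hat W(\Pi,-\Pi)$ gives (2) immediately (by definition $\hat w_0(\Pi)=-\Pi$, hence $\hat w_0(\Delta^+)=-\Delta^+$), and (1) is exactly condition (2) in the definition of a standard reduced expression, namely $\theta_i\in\Pi_{i-1}=\hat\sigma_{i-1}(\Delta^+)$'s simple system.

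Next I would establish (3). At step $i$ the reflection $\hat r_i=\hat r_{\theta_i}$ with $\theta_i$ simple in $\hat\sigma_{i-1}(\Delta^+)$; the three cases are precisely the trichotomy recorded just before Lemma~\ref{basiclem1}: if $\theta_i\in\overline\Delta_0$ or $\theta_i\in\overline\Delta_1$ (real reflection at an even simple root, resp.\ odd reflection at an isotropic simple root) then $\hat r_{\theta_i}$ changes the positive system by exactly the single root $\theta_i\mapsto-\theta_i$ (for the real case this uses that the $\theta_i$-row of the Cartan matrix has non-positive off-diagonal entries, exactly as in the proof of Lemma~\ref{basiclem1}; for the odd case it is Lemma~\ref{oddreflem}); and if $\theta_i\in\Delta_1\backslash\overline\Delta_1$ then $\hat r_{\theta_i}=s_{\theta_i}=s_{2\theta_i}$ flips the pair $\{\theta_i,2\theta_i\}$. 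This is a local computation at each step and should be routine once the trichotomy is in hand.

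For (4) and (5) I would track, over all $N$ steps, which roots of $\Delta^+$ get sent to $\Delta^-$. Since $\hat w_0(\Delta^+)=-\Delta^+$, every root of $\Delta^+$ must be ``flipped'' at some stage, and by Lemma~\ref{different pi} no reflection repeats, so each step flips a set of roots disjoint from those flipped before. By (3), step $i$ flips either $\{\theta_i\}$ (the two one-root cases) or $\{\theta_i,2\theta_i\}$ (the divisible case). Summing the cardinalities over $i=1,\dots,N$ and matching against $\#\Delta^+$ gives $\#\Delta^+ = N + \#\{\,i : \theta_i\in\Delta_1\backslash\overline\Delta_1\,\}$; and $\{i:\theta_i\in\Delta_1\backslash\overline\Delta_1\}$ is in bijection with $\Delta_1^+\backslash\overline\Delta_1^+$ because each such $\theta_i$ is the unique step flipping the corresponding divisible odd root (and conversely every root of $\Delta_1^+\backslash\overline\Delta_1^+$ is $\theta_i$ for exactly one such $i$). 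This yields $N=\#\Delta^+-\#(\Delta_1^+\backslash\overline\Delta_1^+)$, which is (4), and reading the partition of $\Delta^+$ off the same bookkeeping gives the disjoint-union description in (5).

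The main obstacle I anticipate is making the bookkeeping in (4)--(5) airtight: one needs to know not just that each step flips $\theta_i$ or $\{\theta_i,2\theta_i\}$ but that, over the whole reduced word, these flipped sets exhaust $\Delta^+$ without overlap. The no-overlap part follows from Lemma~\ref{different pi} together with (3) (a root once moved to the negative side stays there, since later reflections act on the simple system of the current, already-updated positive system and only touch roots that are currently positive); the exhaustion part follows from $\hat w_0(\Delta^+)=-\Delta^+$. I would also need the small auxiliary fact that whenever $\theta_i$ is a non-isotropic odd simple root of some $\hat\sigma_{i-1}(\Delta^+)$ then $2\theta_i$ is also positive there and gets flipped simultaneously — this is where the definition $s_\beta=s_{2\beta}$ and the structure of the extended fundamental system $\tilde\Pi$ (which is designed precisely so that these divisible even roots are reachable as simple roots) enter, and it is the one place where the argument is genuinely super rather than a transcription of the reductive-Lie-algebra longest-element construction.
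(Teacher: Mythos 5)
Your proposal is correct and follows essentially the same route as the paper's own proof: apply Lemma \ref{basiclem1} with $\Pi'=-\Pi$, use Lemma \ref{different pi} (plus the fact that $\theta_i$ and $2\theta_i$ are never flipped at separate steps) for the disjointness and exhaustion bookkeeping behind (3)--(5), and invoke Proposition \ref{length prop} to conclude $\hat w_0$ is distinguished. Your write-up is in fact more detailed than the paper's rather terse argument; the only stray remark is that the extended system $\tilde\Pi$ plays no role here (it is only needed for Lemma \ref{simple ref gen}), since the divisible roots $2\theta_i$ are never simple and are always flipped together with $\theta_i$.
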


\begin{proof} We apply Lemma \ref{basiclem1} to the case when $\Pi'=-\Pi$. By Lemma \ref{different pi},
we know that $\hat r_{\theta_i}\ne \hat r_{\theta_j}$ whenever $i\ne j$. Furthermore,  both $\theta_i$ and $2\theta_i$ never occur simultaneously in $\{\theta_1,\theta_2,\cdots\}$ if  $\theta_i\in \Delta_1^+\backslash \overline\Delta^+_1$. So,  the choice of those $\theta_i$ from the arguments in the proof of Lemma \ref{basiclem1}  implies that all positive roots have to be involved when we finish the operation of  changing $\Pi$ into $-\Pi$ (note that we identify the reflections arising from  $\overline\Delta^+_0$ with  the ones from $\Delta_1^+ \backslash \overline\Delta^+_1$). From the above arguments, the statements (1)-(4) follow. Furthermore, combining with  Proposition \ref{length prop} we know $\hat w_0$ is a distinguished element. The proof is completed.
\end{proof}

\begin{remark}
 By Proposition \ref{length prop} and Theorem \ref{longest element thm},
 we know that the longest standard length of $\hat W$ is $N$, and  $\hat w_0$ is of the longest standard length $N$.  This element plays a role as important as in the longest element $\omega_0$ in the usual Weyl group $W$ of a classical Lie algebra. Be careful that the longest standard length may not coincide with the longest length in the corresponding Coxeter system.
\end{remark}

\section{Graded restricted module categories}
Keep the notations and assumptions as in the previous two sections. Set $\bbz\Pi$ to be the free abelian group spanned by $\Pi$. Then $\ggg=\hhh+\sum_{\alpha\in \Delta}\ggg_\alpha$ is naturally a $\bbz\Pi$-graded Lis superalgebra. We can consider a so-called $\bbz\Pi$-graded $\ggg$-module category which consists of the objects satisfying the $\bbz\Pi$-graded structure compatible with $\bbz\Pi$-graded structure of $\ggg$, i.e.
for $M=\sum_{\lambda\in \bbz\Pi}M_\lambda$, $g_\alpha\cdot M_\lambda\subset M_{\alpha+\lambda}$.

Set  $$Y:=X(\frakt).$$
By Lemma \ref{character gp}, we can extend a $\bbz\Pi$-graded module category of $\ggg$ to some $Y$-graded module category of $\ggg$.
\subsection{$(U_0(\ggg),\frakt)$-module categories}
  By definition, a rational $\frakt$-module $V$  means a $Y$-graded vector space admissible with rational $\frakt$-action, i.e.
$V=\sum_{\mu\in Y}V_\mu$, where
$V_\mu=\{v\in V\mid t\cdot v=\mu(t) v,\; \forall t\in\frakt \}$.
Recall that the automorphism group of $\g$ contains a closed subgroup $G_\ev$.
For $\mu\in Y$,  its
differential $\textsf{d}\mu: \h\rightarrow \bbk$, is a homomorphism
of restricted Lie algebras and satisfies
${\textsf{d}}\mu(H^{[p]})=({\textsf{d}}\mu(H))^{[p]}$.   This means that
${\textsf{d}}\mu\in \Lambda:=\{\lambda\in \hhh^*\mid \lambda(H)^p=\lambda(H^{[p]}), \forall H\in \hhh \}$. The map $\varphi: \mu\mapsto
{\textsf{d}}\mu$ has kernel $pY$. And this induces a
bijection $Y/pY\cong \Lambda$.  We may identify $Y\slash pY$ with
$\Lambda$. Sending $\mu\in Y$ to $\overline\mu\in
\Lambda=Y\slash pY$, we write ${\textsf{d}}\mu(H)$
directly as $\overline\mu(H)$ without any confusion, and call them
\textit{restricted weights}. (Sometimes, ${\textsf d}\mu$ and $\mu$ are not discriminated in use if no confusion happens in context.)

 Naturally, $U_0(\g)$
 and its canonical subalgebras which will be used later
become rational $\frakt$-modules with the action denoted by
$\Ad(T)a$ for $T\in \frakt$ and $a\in U_0(\g)$.

Let us introduce the full subcategory $(U_0(\g),\frakt)\hmod$ of the
$U_0(\g)$-module category $U_0(\g)\hmod$:

\begin{definition}\label{UGTMOD}\footnote{
With the same spirit, there are other formulations different from the category
$(U_0(\g),\frakt)\hmod$ used here, such as the $G_1\frakt$-module category for the first Frobenusou kernel (\textit{cf}. \cite{J01} and
\cite{J3}), the $\bbz$-graded $U_0(\g)$-module category (\textit{cf}.
\cite[\S11]{J1}), and the $u(\g)\#\Dist(\frakt)$-module category
(\textit{cf}. \cite{ln}, see Remark \ref{FOOT}(2)). Here we follow the formulation used in \cite{SZha}.} The category
$(U_0(\g),\frakt)\hmod$ is defined as such a category whose objects
are finite-dimensional $\bbk$-superspaces endowed with both
$U_0(\g)$-module and rational $\frakt$-module structure satisfying the
following compatibility conditions for $V\in (U_0(\g),\frakt)\hmod$:

(i) The action of $U_0(\h)$ coincides with the action of
$\Lie(\frakt)$ induced from $\frakt$.

(ii) For $a\in U_0(\g)$, $T\in \frakt$, and $v\in V$: $T(av)=(\Ad
T(a))Tv$.

The morphisms of $(U_0(\g),\frakt)\hmod$ are defined to be linear maps
of $\bbk$-superspaces acting as both $U_0(\g)$-module homomorphisms, and
rational $\frakt$-module homomorphisms.
\end{definition}

\begin{remark} \label{FOOT} (1) Let $u$ be a Hopf subalgebra of $U_0(\g)$ (or $U(\ggg)$) with adjoint $\hhh$-module structure.
We can define a category of $(u,\frakt)\hmod$ in the same way as
above, of which each object is simply  called a  $\hat u$-module.

(2)  The $\hat u$-module category can be realized a module category
of the precise Hopf algebra $\hat u=u\#\Dist(\frakt)$, where $\Dist(\frakt)$ denote the distribution algebra of $\frakt$ as defined
in \cite{ln}.
\end{remark}

\subsubsection{Baby Verma modules} \label{baby Verma mod}

  Naturally, $U_0(\g)$, $U_0(\bbb)$ become $\frakt$-modules, compatible with the restricted $\hhh$-module structure. For stressing such structure, we denote them by $\hat U_0(\ggg)$ and $\hat U_0(\bbb)$ respectively.
  Hence for any $\mu\in Y$, one naturally has a one-dimensional $\hat U_0(\bbb)$-module $\bbk_\mu$ with trivial $\nnn^+$-action, $\hhh$-action of multiplication  via $\mathsf{d}(\mu)$, and $\frakt$-action of multiplication via  $\mu$. Therefore, we can endow with $\hat U_0(\g)$-module structure on
     $$\hat Z(\mu)=\hat U_0(\g)\otimes_{\hat U_0(\bbb)}\bbk_\mu.$$
     Once the parity of the one-dimensional space $\bbk_\mu$ is given{\footnote{For the case of restricted representation categories or $(u,\frakt)$-module categories, one can give parities for  weight spaces similar to \cite[\S6]{CLW}}}, say $\epsilon\in \bbz_2=\{\bz,\bo\}$, then the super-structure of $\hat Z(\mu)$ is determined by the super structure of $\hat U(\nnn^-)=\hat U(\nnn^-)_\bz \oplus \hat U(\nnn^-)_\bo$ together with  $\epsilon$  as follows
     $$\hat Z(\mu)=\hat Z(\mu)_\bz\oplus \hat Z(\mu)_\bo, \mbox{ where } \hat Z(\mu)_{\delta+\epsilon}=\hat U_0(n^-)_\delta\otimes\bbk_\mu \mbox{ for }\delta\in\bbz_2.$$
     Note that 
 $Y$ is a poset with partial order "$\leq$": $\lambda\leq \mu$ if and only if $\mu-\lambda \in \bbz_{\geq 0}\Pi$. By the definition, $\hat Z(\mu)$ is a highest weight module with highest weight $\mu$.
  As $U_0(\ggg)$-modules, $\hat Z(\mu)$ is isomorphic to $Z(\bar\mu):=U_0(\ggg)\otimes_{U_0(\bbb)}\bbk_{\bar\mu}$. Both of them share the same super space structure.

     Generally, we can define for any $\lambda\in \Lambda$,
     $$Z(\lambda)=U_0(\ggg)\otimes_{U_0(\bbb)}\bbk_{\lambda},$$
      called a baby Verma module. So there is a natural forgetful functor $\mathcal{F}$ from $(U_0(\ggg),\frakt)\hmod$ to $U_0(\ggg)\hmod$ which  sends $\hat Z(\mu)$ to $Z(\textsf{d}\mu)$ for any $\mu\in Y$.

 Following the same  arguments as in \cite[Proposition 10.2]{J1} or  \cite[Lemma 2.1]{SZha}, we can easily obtain
 \begin{lemma} In the categories $U_0(\ggg)\hmod$ and $(U_0(\ggg),\frakt)\hmod$, the following statements hold.
 \begin{itemize}
 \item[(1)]  Both of $Z(\lambda)$ and $\hat Z(\mu)$ for any $\lambda\in \Lambda$ and $\mu\in Y$ admit unique maximal submodules respectively. The unique irreducible quotients are denoted by $L(\lambda)$ and $\hat L(\mu)$ respectively.
 \item[(2)] The above $\hat L(\mu)$ belongs to $(U_0(\ggg),\frakt)\hmod$.
 \end{itemize}
  \end{lemma}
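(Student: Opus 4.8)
The plan is to reduce both assertions to the corresponding classical statements for the even part $\ggg_\bz$, using the fact that $U_0(\ggg)$ is a free module over $U_0(\ggg_\bz)$ and that the forgetful functor $\mathcal{F}$ behaves well with respect to baby Verma modules. First, for the existence of a unique maximal submodule: I would observe that, just as in \cite[Proposition 10.2]{J1}, the weight space $Z(\lambda)_\lambda$ (resp. $\hat Z(\mu)_\mu$) of the highest weight is one-dimensional, and that any submodule $M \subsetneq Z(\lambda)$ must satisfy $M_\lambda = 0$: otherwise $M$ would contain the generator $1\otimes 1$ and hence all of $Z(\lambda)$. Since $Z(\lambda)$ is finite-dimensional and $\bbz\Pi$-graded (resp. $Y$-graded), the sum of all proper submodules is again a proper submodule — it still has zero $\lambda$-component — and this is the unique maximal submodule. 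The quotient $L(\lambda)$ (resp. $\hat L(\mu)$) is then irreducible by construction, and the highest weight line survives, so $L(\lambda) \neq 0$.

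The subtlety in the super setting, compared to the Lie algebra case, is the parity: one must make sure the maximal submodule is a \emph{super}submodule, i.e.\ $\bbz_2$-graded. But this is automatic, because $Z(\lambda)$ and $\hat Z(\mu)$ carry the explicit $\bbz_2$-grading described before the lemma (coming from the grading on $\hat U_0(\nnn^-)$ together with the chosen parity $\epsilon$), each weight space is homogeneous or decomposes into its even and odd parts compatibly, and the action of $\ggg$ is parity-preserving in the graded sense; hence any $\ggg$-submodule is automatically a direct sum of its intersections with the even and odd parts. So the sum of all proper submodules is a proper supersubmodule, giving a well-defined unique maximal object in the category of super $U_0(\ggg)$-modules (resp. in $(U_0(\ggg),\frakt)\hmod$). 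I would spell this out in one or two sentences.

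For part (2), I would argue that $\hat L(\mu)$ lies in $(U_0(\ggg),\frakt)\hmod$ because $\hat Z(\mu)$ does and the category is closed under quotients. Concretely: the unique maximal $\hat U_0(\ggg)$-submodule $\hat N(\mu)$ of $\hat Z(\mu)$ is, by the argument above, a $Y$-graded subspace stable under both the $U_0(\ggg)$-action and the rational $\frakt$-action (the $\frakt$-action being recovered from the $Y$-grading), and the compatibility conditions (i), (ii) of Definition \ref{UGTMOD} pass to the quotient $\hat Z(\mu)/\hat N(\mu) = \hat L(\mu)$ since they hold on $\hat Z(\mu)$. One should also check that $\hat N(\mu)$ maps under $\mathcal{F}$ onto the maximal submodule $N(\mathsf{d}\mu)$ of $Z(\mathsf{d}\mu)$, so that $\mathcal{F}(\hat L(\mu)) = L(\mathsf{d}\mu)$; this follows because $\mathcal{F}$ is exact and the maximal submodule is characterized by the vanishing of the highest weight component, a condition preserved by $\mathcal{F}$.

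The main obstacle, such as it is, is purely bookkeeping: verifying that "maximal submodule" in the super/graded category coincides with "maximal submodule" in the plain $U_0(\ggg)$-module category, i.e.\ that forgetting the $\bbz_2$-grading (and the $\frakt$-structure) does not produce new, larger submodules. This is handled once we note that every $U_0(\ggg)$-submodule of $Z(\lambda)$ is automatically $\bbz_2$-homogeneous (because $\ggg$ itself is $\bbz_2$-graded and acts compatibly, so the even and odd projections of any submodule element again lie in the submodule), and that for the $(U_0(\ggg),\frakt)\hmod$ version the $Y$-grading — hence the $\frakt$-action — is already determined by the weight space decomposition under $U_0(\hhh)$ refined by $Y$. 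With these two observations in place, everything reduces verbatim to the argument in \cite[Proposition 10.2]{J1} and \cite[Lemma 2.1]{SZha}, which is why the lemma can be stated as "easily obtained."
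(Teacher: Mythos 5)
Your proposal is correct and follows essentially the same route the paper intends: the paper gives no detailed argument, simply invoking the standard highest-weight reasoning of \cite[Proposition 10.2]{J1} and \cite[Lemma 2.1]{SZha} (one-dimensional highest weight space, every proper submodule avoids it, the sum of proper submodules is again proper, and the quotient inherits the $Y$-grading and $\frakt$-compatibility since $(U_0(\ggg),\frakt)\hmod$ is closed under quotients), which is exactly what you write out. One caveat: your auxiliary claim that every $U_0(\ggg)$-submodule of $Z(\lambda)$ is automatically $\bbz_2$-homogeneous ``because the even and odd projections of any submodule element again lie in the submodule'' is false as a general principle (gradedness of the algebra does not force gradedness of submodules); it is also unnecessary here, since by the paper's standing conventions ``module'' means supermodule, so submodules in both categories are $\bbz_2$-graded by definition and the maximality argument takes place entirely within the supermodule category.
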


\subsubsection{Isomorphism classes of irreducible modules in $U_0(\ggg)\hmod$ and $(U_0(\ggg),\frakt)\hmod$}

\begin{proposition}
\begin{itemize}
\item[(1)]  The iso-classes of irreducible modules in
$(U_0(\g),\frakt)\hmod$ are in one-to-one correspondence with
$Y$. Precisely, each simple objects in $(U_0(\g),\frakt)\hmod$
is isomorphic to $\hat L(\mu)$ for $\mu\in Y$.

\item[(2)] $\hat Z(\mu)|_{U_0(\g)}\cong Z(\overline \mu)$, and $\hat
L(\mu)|_{U_0(\g)}\cong L(\overline \mu)$. Furthermore, sending
$\mu\in Y$ to $\overline\mu\in \Lambda$
 gives rise to the surjective map  $\hat
L(\mu)\mapsto L(\overline \mu)$ from the set of iso-classes of
simple objects of $(U_0(\g),\frakt)\hmod$ and to those of $U_0(\g)\hmod$.
\end{itemize}
\end{proposition}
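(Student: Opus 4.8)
The plan is to reduce everything to the corresponding statements for the abelian category $(U_0(\ggg),\frakt)\hmod$ versus $U_0(\ggg)\hmod$ that are already classical in the purely even setting (see \cite[Proposition 10.2, \S II.9]{J1}), and to keep careful track of the $\bbz_2$-grading and the $Y$-action. For part (1), I would first note that by the preceding lemma every simple object of $(U_0(\ggg),\frakt)\hmod$ is of the form $\hat L(\mu)$ for some $\mu\in Y$, since any simple $\hat U_0(\ggg)$-module is a quotient of $\hat Z(\mu)$ for the weight $\mu\in Y$ occurring as a highest weight (here one uses that $\hhh$ acts semisimply through the rational $\frakt$-action, so a highest-weight vector exists and its weight lifts canonically to $Y$, not merely to $\Lambda$). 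It then remains to show that $\hat L(\mu)\cong\hat L(\mu')$ forces $\mu=\mu'$: this follows because an isomorphism of $(U_0(\ggg),\frakt)$-modules must preserve the $Y$-grading, hence sends the highest-weight line to the highest-weight line, giving $\mu=\mu'$. Conversely each $\mu\in Y$ gives a genuine object $\hat L(\mu)$ by the lemma, so the correspondence $\mu\leftrightarrow\hat L(\mu)$ is a bijection with $Y$.

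For part (2), the isomorphism $\hat Z(\mu)|_{U_0(\ggg)}\cong Z(\overline\mu)$ is immediate from the construction in \S\ref{baby Verma mod}: both are $U_0(\ggg)\otimes_{U_0(\bbb)}\bbk$ on the weight $\mu$ resp.\ $\overline\mu={\sf d}\mu$, and the remark there already records that they have the same underlying superspace. Applying the forgetful functor $\mathcal F$ to the surjection $\hat Z(\mu)\twoheadrightarrow\hat L(\mu)$ gives a surjection $Z(\overline\mu)\twoheadrightarrow \mathcal F(\hat L(\mu))$ of $U_0(\ggg)$-modules; since $\mathcal F(\hat L(\mu))$ is simple (forgetting the $\frakt$-structure cannot break irreducibility, as any $U_0(\ggg)$-submodule is automatically a $\frakt$-submodule via the compatibility condition (ii) together with the fact that $\frakt$ is generated as a group by its action and the exponentials of $\hhh$, as in \cite{J1}), it must be the unique simple quotient $L(\overline\mu)$ of $Z(\overline\mu)$. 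Finally, surjectivity of $\hat L(\mu)\mapsto L(\overline\mu)$ onto iso-classes of simples of $U_0(\ggg)\hmod$ follows because $\varphi\colon Y\to\Lambda$, $\mu\mapsto\overline\mu$, is surjective (the excerpt notes $Y/pY\cong\Lambda$), and every simple $L(\lambda)$, $\lambda\in\Lambda$, lifts: choose any $\mu$ with $\overline\mu=\lambda$ and use $\mathcal F(\hat L(\mu))\cong L(\lambda)$.

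The one point requiring genuine care — and the step I expect to be the main obstacle — is the claim that forgetting the $\frakt$-structure preserves simplicity, i.e.\ that a $U_0(\ggg)$-submodule of an object of $(U_0(\ggg),\frakt)\hmod$ is automatically $\frakt$-stable. In the purely even case this is \cite[\S II.9]{J1}; in the super case one must check that the compatibility $T(av)=(\Ad T(a))\,Tv$ together with $\hhh=\Lie(\frakt)$ acting compatibly still forces $\frakt$-invariance of $U_0(\ggg)$-submodules, which is where the hypothesis that $(G_\ev,\ggg)$ is a Chevalley super Harish-Chandra pair (so $\frakt\subset G_\ev$ acts algebraically and $\Dist(\frakt)$ sits inside an algebra acting on $V$) is used. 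Once this is in place, everything else is a transcription of Jantzen's argument with the $\bbz_2$-grading carried along formally, exactly parallel to the proof of the lemma on unique maximal submodules cited just above.
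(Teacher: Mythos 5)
Your reduction of part (2) to the claim that every $U_0(\g)$-submodule of an object of $(U_0(\g),\frakt)\hmod$ is automatically $\frakt$-stable is exactly where the argument breaks: that claim is false, and the justification you offer (that $\frakt$ is ``generated by its action and the exponentials of $\hhh$'') has no content in characteristic $p$. The map $Y\rightarrow\Lambda=Y/pY$ has kernel $pY\neq 0$, so the $\hhh$-action sees $\frakt$-weights only modulo $p$. Concretely, take $\mu\in Y$, $\mu\neq 0$, and let $\bbk_0$ and $\bbk_{p\mu}$ be the one-dimensional objects on which $\g$ acts trivially (through the counit of $U_0(\g)$) and $\frakt$ acts by $0$, respectively by $p\mu$; these satisfy Definition \ref{UGTMOD}(i) because $\mathsf{d}(p\mu)=0$, and (ii) trivially. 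The diagonal line in $\bbk_0\oplus\bbk_{p\mu}$ is a $U_0(\g)$-submodule that is not $\frakt$-stable. Moreover, applied to the module you actually need it for, $\hat L(\mu)$, the assertion ``every $U_0(\g)$-submodule is $\frakt$-stable'' is equivalent to the simplicity of $\mathcal{F}(\hat L(\mu))$, which is precisely what is to be proved; so your key step is both false in general and circular in the relevant case. The appeal to $\Dist(\frakt)$ does not rescue it: $U_0(\g)$ contains only the image of $U(\hhh)$, not the divided powers of $\Dist(\frakt)$, so $U_0(\g)$-submodules need not be $\Dist(\frakt)$-stable.

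The standard repair --- and the route the paper implicitly takes, since it states the proposition without proof and refers for the surrounding material to \cite[\S 11]{J1}, \cite[\S II.9]{J3} and \cite{SZha} --- goes through the baby Verma module rather than through $\hat L(\mu)$ directly. Let $M$ be the unique maximal $U_0(\g)$-submodule of $\hat Z(\mu)|_{U_0(\g)}\cong Z(\overline\mu)$ (this restriction isomorphism is indeed immediate from the construction, as you say). For $t\in\frakt$, the compatibility $t(av)=(\Ad t(a))tv$ shows that $t\cdot M$ is again a $U_0(\g)$-submodule, and it is proper because $t$ is invertible; uniqueness of $M$ then forces $tM\subseteq M$, so $M$ is $\frakt$-stable, i.e.\ a subobject of $\hat Z(\mu)$ in $(U_0(\g),\frakt)\hmod$. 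Hence $M$ contains the unique maximal graded submodule of $\hat Z(\mu)$ (which is a proper $U_0(\g)$-submodule) and is contained in it (being graded and proper), so the two coincide and $\hat L(\mu)|_{U_0(\g)}\cong Z(\overline\mu)/M\cong L(\overline\mu)$ is simple. With this substitution the rest of your outline works: part (1) via the existence of a highest $Y$-weight vector and the fact that isomorphisms in the graded category preserve $Y$-weights, and the surjectivity in (2) from $Y\twoheadrightarrow\Lambda$ together with the fact that every simple $U_0(\g)$-module is some $L(\lambda)$, $\lambda\in\Lambda$. (If one is scrupulous about the $\bbz_2$-grading, the classification in (1) is only up to the choice of parity of the highest weight line, a point the paper also passes over.)
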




By the above proposition, good understanding of
$(U_0(\g),\frakt)\hmod$ can provide us  sufficient  information on
restricted simple modules of $\g$.

\vskip5pt
In this paper, we will {\sl focus on  $(U_0(\g),\frakt)\hmod$. For the simplicity of notations, we simply write $\mathcal{C}$ } for this category in the most time of the rest of the paper.

\subsubsection{Base change of $\mathcal{C}$}
   Keep the above notations. In particular, let $\mathfrak{g}$ be a given basic classical Lie superalgebra over $\bbk$ with $\Lie(G)=\ggg$ for an algebraic supergroup $G$ of Chevalley type, $Y=X(\mathfrak{T})$.   Let $A$ be a commutative $\bbk$-algebra with unity element.  We will extend the category $(U_0(\ggg),\mathfrak{T})\hmod$ over $\bbk$ to the category $(U_0(\ggg)\otimes A,\mathfrak{T})\hmod$ in the same way as in \cite{J2}, simply denoted by $\mathcal{C}_A$ over $A$,  which satisfies the same axioms as in Definition \ref{UGTMOD}(i)(ii) together with a given $\bbk$-algebra homomorphism $U(\hhh)\rightarrow A$.
      For this we  first introduce
  $U$ which denotes  the quotient of $U(\ggg)$ by the ideal generated by  $x^p_\alpha-x_\alpha^{[p]}$ for $x_\alpha\in \ggg_\alpha$,  $\forall \alpha\in \Delta_0$.
  Then we have  an isomorphism of vector spaces
  $$
U_0(\nnn^-)\otimes U(\hhh)\otimes U_0(\nnn^+)\rightarrow U.$$
We shall write $U^0$ for the image of $U(\hhh)$ in $U$. This image is isomorphic to $U(\hhh)$.

    \begin{defn}\label{Ext A-Mod}
    For a given $\bbk$-algebra homomorphism $\pi: U^0\rightarrow A$, we define the extended category $\mathcal{C}_A$ of  $\mathcal{C}$ which consists of  the objects satisfy the condition (1) (2) and (3) as below: for  $M=\sum_{\mu\in Y}M_\lambda \in{\text{obj}(\mathcal{C}_A)}$
\begin{itemize}
\item[(1)] $M$ is a $\hat U_0(\ggg)\otimes{A}$ module with  $Y$-graded structure admissible with $\mathfrak{T}$-action, as shown as formulated in  Definition \ref{UGTMOD}(i)(ii).
\item[(2)] $A\cdot{M_\mu}\subset M_\mu$.
\item[(3)] The action of $\mathfrak{h}$ on each graded component $M_{\mu}$ is diagonalisable and compatible with $Y$-graded structure and $\pi$, this is to say for $Z:=\Hom_\bbk(\hhh,A)$,
   $$M_{\mu}=\bigoplus_{\nu\in Z}M_\mu^{(\nu)}$$
   satisfying that   $M_{\mu}^{(\nu)}\neq{0}$ implies  ${\nu=\pi+\mathsf{d}(\mu)}$.
\end{itemize}
\end{defn}

Given a commutative $\bbk$-algebra $A$,  a $\bbk$-algebra homomorphism $\pi:U^0\rightarrow A$, and $\lambda\in Y$,  by Remark \ref{FOOT} we can  define a baby Verma module  $\hat Z_A(\lambda)$ in $\mathcal{C}_A$
 $$\hat Z_A(\lambda)=\hat U\otimes_{\hat U^0\hat U_0(\mathfrak{\nnn}^+)}A_{\lambda}$$
 where $A_{\lambda}=A\otimes\bbk_{\lambda}$ is endowed with a $\hat U^0\hat U_0(\mathfrak{\nnn}^+)$-module structure with $H$-action for $H\in\hhh$ as multiplication  by $\pi(H)+\mathsf{d}(\lambda)(H)$ and trivall $\nnn^+$-action, and  $A_\lambda$ admits a trivial $\nnn^+$-module structure and shares the same parity structure with $\bbk_\lambda$.

\section{Baby Verma modules and their twists in $\mathcal{C}_A$}\label{baby Verma hom}

Keep the notations as in the previous sections. 
 For $\sigma\in \hat W$, we have a new Borel subalgebra $\sigma(\bbb)=\hhh\oplus \sigma(\nnn^+)$ where $\sigma(\nnn^+):=\sum_{\alpha\in \sigma(\Delta^+)}\ggg_\alpha$. Obviously, $\sigma(\nnn^+)$ is still a restricted Lie super subalgebra of $\ggg$.  Hence we have a twisted baby Verma module for $\lambda\in Y$
$$\hat Z_A^\sigma(\lambda):=\hat U\otimes_{\hat U^0\hat U_0(\sigma(\nnn^+))} A_\lambda.$$
In this section, we will make some preparation for constructing Jantzen filtration for baby Verma modules in the category  $\mathcal{C}$. More precisely speaking, we need to describe (twisting) homomorphisms in $\mathcal{C}_A$ between a given twisted Verma modules $\hat Z_A^\sigma(\lambda)$  and some of its twisted ones via super reflection $\hat Z_A^{\hat r\cdot \sigma}(\lambda^{\hat r})$ (see (\ref{lambda r}) for $\lambda^{\hat r}$). For this, we only need to understand such homomorphisms from a standard baby Verma modules $\hat Z(\lambda)$ to its super-simple-reflection twist $\hat Z^{\hat r}(\lambda^{\hat r})$. We will exploit the arguments of constructing homomorphisms between twisted baby Verma modules  in \cite{J2} to the super case, for which we need to deal with extra complicated situations arising from odd roots and odd reflections.

\subsection{Homomorphisms between baby Verma modules and their twists via super reflections}\label{varity of varphi}
Now we begin with $\hat Z_A(\lambda)$. For a given super simple reflection $\hat r:=\hat r_\alpha$ for $\alpha\in \Pi$, we will define  different twisting homomorphisms  $\varphi$ from $\hat Z_A(\lambda)$ to $\hat Z_A^{\hat r}(\lambda^{\hat r_\alpha})$, according to the different type for $\alpha$ (the definition of $\lambda^{\hat r_\alpha}$ is given in \ref{lambda r}). We construct those homomorphisms, according to three different cases of super simple reflections.

Recall the Chevalley bases introduced in \S\ref{Chevalley Basis}, and the Weyl vector  $\rho=\rho_0-\rho_1$ where $\rho_0={1\over 2}\sum_{\theta\in \Delta^+_0} \theta$ and  $\rho_1={1\over 2}\sum_{\theta\in \Delta^+_1} \theta$. We set $\hat r_\alpha\rho_0:={1\over 2}\sum_{\theta\in\hat r_\alpha(\Delta)^+_0}\theta$ and $\hat r_\alpha\rho_1:={1\over 2}\sum_{\theta\in\sigma_i(\Delta)^+_1}\theta$
for  $\hat r_\alpha(\Delta)^+=\hat r_\alpha(\Delta)_0^+\cup\hat r_\alpha(\Delta)_1^+$  the positive root set associated with the simple root system $\hat r_\theta(\Pi)$. Then for $\lambda\in Y$, we set
\begin{align}\label{lambda r}
\lambda^{\hat r_\alpha}:=
(\lambda-(p-1)(\rho_0-\hat r_\alpha\rho_0)-(\rho_1-\hat r_\alpha\rho_1)).
\end{align}

In this section, we will give and investigate a homomorphism
$$\varphi: \hat Z_A(\lambda)\rightarrow \hat Z_A^{\hat{r}_{\alpha}}(\lambda^{\hat r_\alpha})$$
whose exact definition will be presented sequently, according to the different types for  $\alpha$.
 \vskip5pt
 \subsubsection{(Case 1): $\alpha\in \Pi\cap \overline \Delta_0$  an even simple root, but half of which is not a root}

  In this case, $\lambda^{\hat r_\alpha}=\lambda-(p-1)\alpha$. We can construct a homomorphism $\varphi$ by the same way as in \cite[Lemma 3.4]{J2}:
$$\varphi: \hat Z_A(\lambda)\rightarrow \hat Z_A^{\hat{r}_{\alpha}}(\lambda-(p-1)\alpha))$$
given by $\varphi(1\otimes 1)=X_\alpha^{p-1}\otimes 1$,
and a converse homomorphism
$$\varphi':~Z_A^{\hat{r}_{\alpha}}(\lambda-(p-1)\alpha)
\rightarrow{Z_A(\lambda)}$$
given by  $\varphi'(1\otimes 1)=Y_{\alpha}^{p-1}\otimes 1$. Here and thereafter $1\otimes 1$ stands for the highest weight vector in its  baby Verma module in $\mathcal{C}_A$.

Then we have the following result by the same arguments as in modular representations of reductive Lie algebras,  which is due to Jantzen.
\begin{lemma} (\cite[Lemma 3.5]{J2}) \label{lemma case 1}
Assume $A=\bbk$ and $\pi(\mathfrak{h})=0$. Take $d\in \{0,1,\cdots, p-1\}$ such that $d=\langle\lambda,\chi_\alpha\rangle(\mbox{mod } p)$ where $\alpha$ is as in (Case 1). Then the following statements hold.
\begin{itemize}
\item[(1)] If $d=p-1$, then $\varphi$ and $\varphi'$ are isomorphisms.
\item[(2)] If $d<p-1$,   $\ker\varphi=\im\varphi'\cong{\coker\varphi}$ and $\ker\varphi'=\im\varphi\cong{\coker\varphi'}$.

Furthermore,
 we have an exact sequence
$${\cdots}\rightarrow{\hat Z(\lambda-(p+d+1)\alpha)}\rightarrow{\hat Z(\lambda-p\alpha)}\rightarrow{\hat Z(\lambda-(d+1)\alpha)}\rightarrow{\ker\varphi}\longrightarrow 0.$$
\end{itemize}
\end{lemma}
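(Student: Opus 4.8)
The plan is to reduce the whole statement to a known computation from \cite[Lemma 3.5]{J2} for the even part $\ggg_{\bar 0}$, and then lift it through the base change. First I would record the explicit images of the standard PBW-type weight vectors under $\varphi$ and $\varphi'$: setting $v_i=\tfrac{Y_\alpha^i}{i!}\otimes 1\in \hat Z(\lambda)$ and $v'_i=\tfrac{X_\alpha^i}{i!}\otimes 1\in \hat Z^{\hat r_\alpha}(\lambda-(p-1)\alpha)$ for $0\le i\le p-1$, a direct $\sssl(2)$-style computation inside the copy of $U_0(\sssl(2))$ attached to the (non-isotropic, even) root $\alpha$ gives
\begin{align*}
\varphi(v_i)&=(-1)^i\tfrac{(p-1)!}{i!}\Bigl(\textstyle\prod_{j=1}^{i}(\pi(H_\alpha)+\lambda(H_\alpha)-j+1)\Bigr)v'_{p-1-i},\\
\varphi'(v'_i)&=\tfrac{(p-1)!}{i!}\Bigl(\textstyle\prod_{j=1}^{i}(\pi(H_\alpha)+\lambda(H_\alpha)+j+1)\Bigr)v_{p-1-i}.
\end{align*}
Here the key point is that, since $\alpha\in\overline\Delta_0$, the subalgebra generated by $X_\alpha,Y_\alpha,H_\alpha$ is an ordinary $\sssl(2)$ acting on $\hat Z(\lambda)$ exactly as in the reductive Lie algebra case, and $X_\beta$-commutators for $\beta\ne\alpha$ do not interfere with these specific vectors; so the calculation is literally the one in \cite{J2}, with no odd-root contributions.

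Next, specializing $A=\bbk$ and $\pi(\hhh)=0$, write $d\in\{0,\dots,p-1\}$ for the residue of $\lambda(H_\alpha)=\langle\lambda,\chi_\alpha\rangle\pmod p$. For (1): if $d=p-1$ then in each product $\prod_{j=1}^{i}(\lambda(H_\alpha)-j+1)$ with $i\le p-1$ no factor vanishes mod $p$, and likewise $(p-1)!/i!\ne 0$; hence $\varphi$ sends a basis to a basis of $\hat Z^{\hat r_\alpha}(\lambda-(p-1)\alpha)$, so $\varphi$ (and symmetrically $\varphi'$) is an isomorphism. For (2), when $d<p-1$: the product defining $\varphi(v_i)$ vanishes precisely when $i\ge d+1$, and the one defining $\varphi'(v'_i)$ vanishes precisely when $i\ge p-1-d$; together with $\varphi'\varphi$ and $\varphi\varphi'$ being, on each $v_i$ resp.\ $v'_i$, multiplication by $\prod_{j=1}^{p-1}(\lambda(H_\alpha)\pm j+1)$-type scalars that annihilate the complementary range, one gets $\ker\varphi=\im\varphi'$, $\ker\varphi'=\im\varphi$, and the stated cokernel identifications by dimension count (the two images are complementary graded pieces). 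I would then note that $\ker\varphi$ is the $U_0(\ggg)$-submodule of $\hat Z(\lambda)$ generated by $v_{d+1}$, which is a highest weight vector of weight $\lambda-(d+1)\alpha$ killed by $\nnn^+$ (using $X_\alpha v_{d+1}=0$ mod the relevant scalar and $X_\beta v_{d+1}=0$ for the other simple $\beta$, since $v_{d+1}$ lies on the $\alpha$-string through the highest weight vector), giving the surjection $\hat Z(\lambda-(d+1)\alpha)\twoheadrightarrow\ker\varphi$.

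Finally I would splice in the tail of the exact sequence. The map $\hat Z(\lambda-p\alpha)\to\hat Z(\lambda-(d+1)\alpha)$ is the analogous $\varphi'$-type homomorphism for the pair of weights $\{\lambda-(d+1)\alpha,\lambda-p\alpha\}$ along the same root $\alpha$ (whose residues now are $d$ and $0$), and its image is exactly the kernel of $\hat Z(\lambda-(d+1)\alpha)\to\ker\varphi$; iterating this — each step being another instance of Lemma for the shifted weight — produces the infinite resolution. The cleanest way to organize this is to observe that the whole picture only involves the rank-one situation and is identical to \cite[Lemma 3.5]{J2}, so the exactness is inherited verbatim; the only thing to check on our side is that passing from $\ggg$ to this $\sssl(2)$ and back is compatible with the $\frakt$-grading and with the definition \eqref{lambda r} of $\lambda^{\hat r_\alpha}$ (which in Case 1 reduces to $\lambda-(p-1)\alpha$, since $\rho_1-\hat r_\alpha\rho_1$ contributes nothing and $\rho_0-\hat r_\alpha\rho_0=\tfrac{p-1}{?}$... more precisely $\hat r_\alpha(\Delta^+_0)$ differs from $\Delta^+_0$ only by $\alpha\mapsto-\alpha$, giving $\rho_0-\hat r_\alpha\rho_0=\alpha$ and hence $(p-1)(\rho_0-\hat r_\alpha\rho_0)=(p-1)\alpha$). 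The main obstacle, such as it is, is bookkeeping: making sure the parity/$Y$-grading conventions of $\mathcal{C}_A$ are respected at each arrow and that the "other" root vectors genuinely act trivially on the $\alpha$-string vectors $v_i$; the algebra itself is entirely classical and borrowed from \cite{J2}.
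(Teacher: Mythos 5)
Your proposal is correct and follows essentially the same route as the paper, which proves this lemma simply by invoking Jantzen's rank-one computation for the even simple root $\alpha$ (\cite[Lemma 3.5]{J2}): the explicit formulas for $\varphi(v_i)$ and $\varphi'(v'_i)$ you derive are exactly the ones underlying the paper's citation, and your vanishing analysis, kernel/cokernel identifications, and the $Y_\alpha^{d+1}$, $Y_\alpha^{p-d-1}$ alternating resolution of $\ker\varphi$ reproduce Jantzen's argument, with the only super-specific check (that $\lambda^{\hat r_\alpha}=\lambda-(p-1)\alpha$ and that the other root vectors do not interfere on the $\alpha$-string) handled as the paper does in its Cases 2 and 3.
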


\vskip10pt
\subsubsection{(Case 2): $\alpha\in \Pi \cap \overline \Delta_1$  a simple isotropic odd root}\label{subsec case 2}

 In this case, $\lambda^{\hat r_\alpha}=\lambda-\alpha$. We can construct a homomorphism
 $$\varphi: \hat Z_A(\lambda)\rightarrow \hat Z^{\hat r_\alpha}(\lambda-\alpha)\mbox{ and } \;\;\;\varphi': Z_A^{\hat{r}_{\alpha}}(\lambda-\alpha)\rightarrow Z_A(\lambda)  $$
 given by $\varphi(1\otimes1)=X_\alpha\otimes 1$ and  $\varphi'(1\otimes 1)=Y_{\alpha}\otimes1 $ respectively.

Set $v_0=1\otimes 1, v_1=Y_{\alpha}\otimes 1 \in  \hat Z_A(\lambda)$ and $v'_0=1\otimes 1, v'_1=X_{\alpha}\otimes 1 \in \hat Z^{\hat r_\alpha}(\lambda-\alpha)$.
By a straightforward  calculation£¬
we have
\begin{align}\label{case 2 comp}
&\varphi(v)=\begin{cases}
v'_1,&{v=v_0},\cr
(\pi(H_{\alpha})+\lambda(H_{\alpha}))v_0',&{v=v_1};
\end{cases}\cr
&\varphi'(v')=\begin{cases}
v_1,&{v'=v'_0},\cr
(\pi(H_{\alpha})+\lambda(H_{\alpha})){v_0},&{v'=v'_1}.
\end{cases}
\end{align}

We have the following lemma.

\begin{lemma}\label{lemma case 2}
Assume that $\alpha$ is as in (Case 2), $A=\bbk$ and $\pi(\mathfrak{h})=0$. Still take $d\in \{0,1,\cdots, p-1\}$ such that $d=\langle\lambda,\chi_\alpha\rangle(\mbox{mod } p)$. The following statements hold.
 \begin{itemize}
 \item[(1)] If $d\ne 0$, i.e. $\langle\lambda, \chi_{\alpha}\rangle\not\equiv 0 (\mbox{mod }p)$, then $\varphi$ and $\varphi'$ are isomorphisms.
 \item[(2)] If $d=0$, i.e. $\langle\lambda, \chi_\alpha\rangle\equiv 0 (\mbox{mod }p)$, then
$$\ker\varphi=\im\varphi'\cong{\coker\varphi};\;\;\; \ker\varphi'=\im\varphi\cong{\coker\varphi'}.$$

Furthermore, we have an exact sequence$${\cdots}\rightarrow{\hat Z(\lambda-2\alpha)}\rightarrow{\hat Z(\lambda-\alpha)}\rightarrow{\ker\varphi}\rightarrow0$$ in $\mathcal{C}$.
    \end{itemize}
\end{lemma}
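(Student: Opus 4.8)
The plan is to analyze the two homomorphisms $\varphi$ and $\varphi'$ directly through the explicit formulas in (\ref{case 2 comp}), which already describe the full action of $\varphi$ and $\varphi'$ on the natural bases of the relevant baby Verma modules. First I would record the key structural fact that, since $\alpha$ is an isotropic odd root, we have $2\alpha\notin\Delta$ and $X_\alpha^2 = 0$, $Y_\alpha^2 = 0$ in $U(\ggg)$; hence, as a $U_0(\nnn^-)$-module, $\hat Z_A(\lambda)$ restricted to the odd root vector $Y_\alpha$ is spanned by $v_0 = 1\otimes 1$ and $v_1 = Y_\alpha\otimes 1$ over the subalgebra generated by the remaining root vectors. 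More precisely, I would observe that $\varphi$ and $\varphi'$ are morphisms in $\mathcal{C}_A$, so they are determined by the images of the highest weight vectors, and then note that the whole module decomposes compatibly so that it suffices to track what happens on the "$\alpha$-string" $\{v_0, v_1\}$ and $\{v_0', v_1'\}$; this reduces everything to the two-by-two situation recorded in (\ref{case 2 comp}).

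For part (1), with $A = \bbk$ and $\pi(\hhh) = 0$, the scalar $\pi(H_\alpha) + \lambda(H_\alpha) = \lambda(H_\alpha) = \langle\lambda,\chi_\alpha\rangle$ is nonzero in $\bbk$ precisely when $d\neq 0$. Then (\ref{case 2 comp}) shows $\varphi(v_0) = v_1'$ and $\varphi(v_1) = \langle\lambda,\chi_\alpha\rangle v_0'$, so $\varphi$ maps the basis $\{v_0, v_1\}$ to a basis $\{v_1', \langle\lambda,\chi_\alpha\rangle v_0'\}$ of the target (up to the nonzero scalar), and likewise for $\varphi'$; I would spell out that this gives a bijection on the full modules (tensoring up the identification of the $\alpha$-strings with the common factor coming from the other root vectors), hence $\varphi$, $\varphi'$ are isomorphisms. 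For part (2), when $d = 0$ the scalar vanishes, so $\varphi(v_1) = 0$ and $\varphi(v_0) = v_1'$, giving $\ker\varphi = \hat U(\text{rest})\cdot v_1$ and $\im\varphi = \hat U(\text{rest})\cdot v_1'$; symmetrically for $\varphi'$. I would then check directly that $\im\varphi' = \ker\varphi$ and $\im\varphi = \ker\varphi'$ (both equal the "$v_1$-part", resp. "$v_1'$-part"), and that $\coker\varphi \cong \hat Z_A(\lambda)/\ker\varphi' \cdot(\ldots)$ — i.e., the short exact sequences $0\to\ker\varphi\to\hat Z_A(\lambda)\to\im\varphi\to 0$ together with $\im\varphi = \ker\varphi'\cong\coker\varphi'$ give all four isomorphisms after a small diagram chase.

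For the final exact sequence, the idea is that $\ker\varphi$ is generated as a $U_0(\ggg)$-module by the single vector $v_1 = Y_\alpha\otimes 1$, which has weight $\lambda - \alpha$ and is annihilated by $X_\alpha$ (since $X_\alpha Y_\alpha\otimes 1 = [X_\alpha, Y_\alpha]\otimes 1 = H_\alpha\otimes 1 = \langle\lambda,\chi_\alpha\rangle(1\otimes 1) = 0$ when $d = 0$) but possibly not by the other positive root vectors. I would produce a surjection $\hat Z(\lambda - \alpha)\twoheadrightarrow\ker\varphi$ sending the highest weight vector to $v_1$; its kernel is a highest weight submodule, and iterating this argument — or more efficiently, invoking the analogue of the even case (Case 1, Lemma \ref{lemma case 1}), where such resolutions of kernels by baby Verma modules are standard — yields the displayed sequence $\cdots\to\hat Z(\lambda - 2\alpha)\to\hat Z(\lambda - \alpha)\to\ker\varphi\to 0$ in $\mathcal{C}$. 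The main obstacle I anticipate is the last point: verifying that the map $\hat Z(\lambda - \alpha)\to\ker\varphi$ is well-defined and surjective, and identifying the subsequent terms of the resolution — here one must be careful about the parity/$\bbz_2$-grading bookkeeping of the odd generator $Y_\alpha$ and about the fact that, unlike in the even $\sssl_2$-situation, the relevant "$\alpha$-$\mathfrak{sl}(1|1)$" subalgebra has a two-dimensional, rather than $p$-dimensional, restricted Verma module, so the tail of the resolution is governed by a different pattern than in (Case 1). I would therefore treat the construction of this resolution carefully, likely factoring through the $\mathfrak{osp}(1|2)$- or $\mathfrak{sl}(1|1)$-subalgebra generated by $X_\alpha, Y_\alpha, H_\alpha$ and using its explicit restricted representation theory.
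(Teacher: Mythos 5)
Your proposal is correct and follows essentially the same route as the paper's proof: reduce to the $\{v_0,v_1\}$-string over the complementary subalgebra $\mathfrak m=\bigoplus_{\beta\in\Delta^+,\,\beta\neq\alpha}\ggg_{-\beta}$ using (\ref{case 2 comp}), read off kernels, images and cokernels, and build the resolution by the multiplication-by-$Y_\alpha$ maps $\hat Z(\lambda-k\alpha)\rightarrow\hat Z(\lambda-(k-1)\alpha)$. The one point you leave open --- whether $v_1=Y_\alpha\otimes 1$ is annihilated by the positive root vectors other than $X_\alpha$ --- is automatic because $\alpha$ is simple (for $\beta\in\Delta^+\setminus\{\alpha\}$ one has $X_\beta Y_\alpha\otimes 1=[X_\beta,Y_\alpha]\otimes 1$ with $\beta-\alpha$ never a negative root), so when $d=0$ the vector $v_1$ is a genuine highest weight vector and your surjection $\hat Z(\lambda-\alpha)\twoheadrightarrow\ker\varphi$ is well defined, exactly as in the paper.
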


\begin{proof} (1)  Set $$\mathfrak{m}:=\bigoplus_{\beta\in \Delta^+,\beta\neq\alpha}{\mathfrak{g}_{-\beta}}.$$
Then $\mathfrak{m}$ is a restricted Lie super subalgebra of $\nnn^-$. Take a basis $B$ of $U_0(\mathfrak{m})$. Then $\{mv_i\mid i=0,1; m\in{B}\}$ constitute a basis of $\hat Z(\lambda)$. And $\{mv'_i\mid i=0,1; m\in{B}\}$ constitute a basis of $\hat Z^{\hat r_\alpha}(\lambda-\alpha)$.
Thus, by (\ref{case 2 comp} we know that $\varphi$ and $\varphi'$ are isomorphisms when $\langle\lambda, \chi_\alpha\rangle\not\equiv 0 (\mbox{mod }p)$. The statement (1) is proved.

(2) Now suppose $\langle\lambda, \chi_\alpha\rangle\equiv 0 (\mbox{mod }p)$. Then by (\ref{case 2 comp}) again, we have
\begin{align}\label{image varphi case 2}
&\ker\varphi=\bbk\mbox{-span}\{Bv_1\},\;\im\varphi=\bbk\mbox{-span}\{Bv'_1\},\cr
&\ker\varphi'=\bbk\mbox{-span}\{Bv'_1\},\;\im\varphi'=\bbk\mbox{-span}\{Bv_1\}.
\end{align}
 So we have
$$\coker\varphi=\frac{\hat Z^{\hat v_\alpha}(\lambda-\alpha)}{\im\varphi}=\frac{\hat Z{\hat v_\alpha}(\lambda-\alpha)}{\ker\varphi'}\cong{\im\varphi'}.$$
Similarly,  $\coker\varphi'\cong{\im\varphi}.$  The satement (2) is proved.

(3) We continue to consider  homomorphisms $\varphi_1:{\hat Z(\lambda-2\alpha)}\rightarrow{\hat Z(\lambda-\alpha)}$ via $\varphi_1(1\otimes 1)={Y_{\alpha}\otimes 1}$; and $\varphi_2:{\hat Z(\lambda-\alpha)}\rightarrow{\hat Z(\lambda)}$ via $\varphi_2(1\otimes 1)={Y_{\alpha}\otimes 1}$. Then we have an exact sequence
$${\cdots}\rightarrow{\hat Z(\lambda-2\alpha)}{\overset{\varphi_2}{\longrightarrow}}{\hat Z(\lambda-\alpha)}{\overset{\varphi_1}{\longrightarrow}}{\ker\varphi}\rightarrow 0.$$
We complete  the proof.
\end{proof}

\vskip10pt
\subsubsection{(Case 3): $\alpha\in \Pi\cap(\Delta_1\backslash\overline\Delta_1)$}\label{subsec case 3}

 In this case, $2\alpha\in \Delta_0$,  and $\hat r_{\alpha}=\hat r_{2\alpha}$. Then $\lambda^{\hat r_\alpha}=\lambda-(2p-1)\alpha$.
We can define  homomorphisms
$$\varphi:\hat Z_A(\lambda)\rightarrow{\hat Z_A^{\hat r_\alpha}(\lambda-(2p-1)\alpha)}$$
via $\varphi(1\otimes 1)={X_\alpha^{2p-1}\otimes 1}$ and
$$\varphi':\hat Z_A^{\hat r_{\alpha}}(\lambda-(2p-1)\alpha)\rightarrow{\hat Z_A(\lambda)}$$
via $\varphi'(1\otimes 1)={Y_\alpha^{2p-1}\otimes 1}$.

For $0\leq{i}\leq 2p-1$, set
$v_0:=1\otimes1_{\lambda}$, $v_i:=Y_\alpha^i\otimes 1\in \hat Z_A(\lambda)$  and  $v'_0:=1\otimes 1$, $v'_i=X_\alpha^i\otimes 1\in \hat Z_A^{r_\alpha}(\lambda-(2p-1)\alpha)$ for $i=1,\cdots, 2p-1$.
Simple calculation show  for $i>0$ that
\begin{align*} X_\alpha. Y_\alpha^i\otimes1=\begin{cases}
-iY_\alpha^{i-1}\otimes1,&\mbox{ when }i \mbox{~is~even}\cr
(\pi(H_\alpha)+\lambda(H_\alpha)-(i-1))Y_\alpha^{i-1}\otimes1,&\mbox{ when } i~\mbox{ is~odd}
\end{cases}
\end{align*}
in $\hat Z_A(\lambda)$
and
\begin{align*}
Y_\alpha.X_\alpha^i\otimes1=\begin{cases}
iX_\alpha^{i-1}\otimes1, &\mbox{ when }i \mbox{ is~even}\\
(\pi(H_\alpha)+\lambda(H_\alpha)+i+1) X_\alpha^{i-1}\otimes1, &\mbox{ when }i \mbox{~is~odd}
\end{cases}
\end{align*}
in $\hat Z^{\hat r_{\alpha}}_A(\lambda-(2p-1)\alpha)$.
Thus we can get the following by induction on $i$:
\begin{align}\label{case 3 comp}
&\varphi(v_i)=\begin{cases}
\prod_{j=1}^{{i\over 2}}(\pi(H_\alpha)+\lambda(H_\alpha)+2(p-j+1))\prod_{j=1}^{i\over 2}(2(p-j))v'_{2p-1-i},&\mbox{ when } i \mbox{~is~even}\\
\prod_{j=1}^{{i+1\over 2}}(\pi(H_\alpha)+\lambda(H_\alpha)+2(p-j+1))\prod_{j=1}^{i-1\over 2}(2(p-j))v'_{2p-1-i},&\mbox{ when }i \mbox{~is~odd}
\end{cases}\cr
&\varphi'(v'_i)=\begin{cases}
(-1)^{\frac{i}{2}}\prod_{j=1}^{\frac{i}{2}}(\pi(H_\alpha)+\lambda(H_\alpha)-2(p-j))\prod_{j=1}^{\frac{i}{2}}(2(p-j))v_{2p-1-i},&\mbox{ when } i~\mbox{ is~even}\\
(-1)^{\frac{i-1}{2}}\prod_{j=1}^{\frac{i+1}{2}}(\pi(H_\alpha)+\lambda(H_\alpha)-2(p-j))\prod_{j=1}^{\frac{i-1}{2}}(2(p-j))v_{2p-1-i},&\mbox{ when } i\mbox{~is~odd}.
\end{cases}
\end{align}

Then we have the following
\begin{lemma}\label{lemma case 3}
Assume that $A=\bbk$ and $\pi(\mathfrak{h})=0$. Take $d\in \{0,1,\cdots, p-1\}$ such that $\langle\lambda, \chi_\alpha\rangle\equiv d(\mbox{mod } p)$ where $\alpha$ is as in (Case 3).
\begin{itemize}
    \item[(1)]  $\ker\varphi=\im\varphi'\cong \coker\varphi$ and
$\ker\varphi'=\im\varphi\cong{\coker\varphi'}$.

\item[(2)]  Furthermore, we have an exact sequence
\begin{itemize}
\item[(i)] when $d$ is an odd number,
$${\cdots}\rightarrow{\hat Z(\lambda-(3p+d+1)\alpha)}\rightarrow{\hat Z(\lambda-2p\alpha)}
\rightarrow{\hat Z(\lambda-(p+d+1)\alpha)}\rightarrow{\ker\varphi}\rightarrow 0;$$
\item[(ii)] When $d$ is an  even number,
$${\cdots}\rightarrow{\hat Z(\lambda-(2p+d+1)\alpha)}\rightarrow{\hat Z(\lambda-2p\alpha)}
\rightarrow{\hat Z(\lambda-(d+1)\alpha)}\rightarrow{\ker\varphi}\rightarrow 0;$$
\end{itemize}
\end{itemize}
\end{lemma}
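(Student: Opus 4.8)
The plan is to mimic the structure of the proofs of Lemma \ref{lemma case 2} and of \cite[Lemma 3.5]{J2}, working directly from the explicit formulas (\ref{case 3 comp}). First I would fix the restricted Lie super subalgebra $\mmm := \bigoplus_{\beta\in\Delta^+,\ \beta\ne\alpha,\ \beta\ne 2\alpha} \ggg_{-\beta}$, so that $U_0(\nnn^-) = U_0(\mmm)\otimes \bbk[Y_\alpha]/(Y_\alpha^{2p})$ as vector spaces (here one uses $2\alpha\in\Delta_0$ and $p$ odd, so $Y_\alpha$ has order $2p$ in $U_0$). Picking a basis $B$ of $U_0(\mmm)$, the sets $\{m v_i : m\in B,\ 0\le i\le 2p-1\}$ and $\{m v'_i : m\in B,\ 0\le i\le 2p-1\}$ are bases of $\hat Z(\lambda)$ and of $\hat Z^{\hat r_\alpha}(\lambda-(2p-1)\alpha)$ respectively, and $\varphi$, $\varphi'$ are morphisms in $\mathcal{C}$ that are $U_0(\mmm)$-linear on the $v_i$, $v'_i$. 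So the whole statement reduces to analysing the two $2p\times 2p$ "triangular" matrices whose entries are the scalars $\varphi(v_i)\in\bbk v'_{2p-1-i}$ and $\varphi'(v'_i)\in\bbk v_{2p-1-i}$ read off from (\ref{case 3 comp}), together with the fact that $\varphi'\circ\varphi$ and $\varphi\circ\varphi'$ are scalars times the identity (or, more precisely, diagonal in the bases).

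For part (1): from (\ref{case 3 comp}), $\varphi(v_i)$ is a nonzero scalar multiple of $v'_{2p-1-i}$ exactly when a certain product of linear factors in $\langle\lambda,\chi_\alpha\rangle = \lambda(H_\alpha)$ (with $\pi(\hhh)=0$) is nonzero mod $p$; the factors $2(p-j)$ are automatically nonzero for $1\le j<p$ since $p$ is odd. I would compute, for each $i$, precisely which factor $(\lambda(H_\alpha)+2(p-j+1))$ or $(\lambda(H_\alpha)-2(p-j))$ vanishes mod $p$ in terms of $d$, and thereby identify $\ker\varphi$ (resp.\ $\im\varphi$, $\coker\varphi$) as the span of $B v_i$ (resp.\ $B v'_{2p-1-i}$) over the appropriate range of $i$. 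The identities $\ker\varphi = \im\varphi'$ and $\im\varphi = \ker\varphi'$ follow by comparing these ranges — this is where one must be careful, because the parity of $i$ changes which of the two product formulas applies, so the "cutoff" index is governed by the parity of $d$; this is exactly the source of the dichotomy in part (2). Then $\coker\varphi = \hat Z^{\hat r_\alpha}(\lambda-(2p-1)\alpha)/\im\varphi = \hat Z^{\hat r_\alpha}(\lambda-(2p-1)\alpha)/\ker\varphi' \cong \im\varphi'$, and symmetrically for $\coker\varphi'$, just as in Lemma \ref{lemma case 2}(2).

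For part (2): once $\ker\varphi$ is pinned down as a highest weight submodule generated (over $U_0(\mmm)$, hence over $U_0(\ggg)$) by $v_j$ for the minimal surviving index $j$, I would identify $j$ explicitly — it will be $p+d+1$ when $d$ is odd and $d+1$ when $d$ is even — and then build the claimed resolution by splicing the two-term "$Y_\alpha$-multiplication" maps $\hat Z(\mu - k\alpha)\to\hat Z(\mu-k'\alpha)$ along the $\alpha$-string, exactly as at the end of the proof of Lemma \ref{lemma case 2}(3) and in \cite[Lemma 3.5]{J2}; each such map has image and kernel again computable from the same kind of product formula with $\lambda$ replaced by $\lambda - k\alpha$, so the sequence is exact at every spot. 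The main obstacle is purely bookkeeping: keeping straight, across the two parities of $i$ and the two parities of $d$, which linear factor vanishes and hence where the kernel/image "jumps", and checking that the indices $3p+d+1$, $2p$, $p+d+1$ (odd case) and $2p+d+1$, $2p$, $d+1$ (even case) are the ones forced by this analysis. I do not anticipate any conceptual difficulty beyond that; everything else is parallel to the reductive Lie algebra case and to (Case 2).
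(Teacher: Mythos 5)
Your proposal is correct and follows essentially the same route as the paper: the same subalgebra $\mathfrak{m}$ (omitting $\ggg_{-\alpha}$ and $\ggg_{-2\alpha}$) and basis $\{mv_i\}$, $\{mv'_i\}$, the same reduction of (1) to reading off from (\ref{case 3 comp}) which factors vanish mod $p$ according to the parities of $i$ and $d$, and the same construction of the exact sequence in (2) by splicing $Y_\alpha$-power maps once $\ker\varphi$ is identified as generated by $v_{p+d+1}$ ($d$ odd) or $v_{d+1}$ ($d$ even). The cutoff indices and the resolution degrees you predict agree with those in the paper's proof.
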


\begin{proof}  Still consider the restricted Lie super subalgebra of $\nnn^-$: $$\mathfrak{m}=\bigoplus_{\beta\in{\Delta},\beta\not\in\bbn\alpha}{\mathfrak{g}_{-\beta}}$$
 where $\bbn$ stands for the positive integer set.  Take a basis $B$ of $U_0(\mathfrak{m})$. Then $\{mv_i\mid i=0,1,\cdots,2p-1; m\in{B}\}$ constitute a basis of $\hat Z(\lambda)$. And $\{mv'_i\mid i=0,1,\cdots, 2p-1; m\in{B}\}$ constitute a basis of $\hat Z^{\hat r_\alpha}(\lambda-\alpha)$.

Under the assumption, the formulas in (\ref{case 3 comp}) become
\begin{align*}
\varphi(v_i)=\begin{cases}
\prod_{j=1}^{\frac{i}{2}}(d+2-2j))\prod_{j=1}^{\frac{i}{2}}(2(p-j))v'_{2p-1-i},&\mbox{ when }i \mbox{~is~even}\cr \prod_{j=1}^{\frac{i+1}{2}}(d+2-2j))\prod_{j=1}^{\frac{i-1}{2}}(2(p-j))v'_{2p-1-i}, &\mbox{ when } i \mbox{~is~odd}
\end{cases}
\end{align*}
and
\begin{align*}
\varphi'(v'_i)=\begin{cases}
(-1)^{\frac{i}{2}}\prod_{j=1}^{\frac{i}{2}}(d+2j)\prod_{j=1}^{\frac{i}{2}}(2(p-j))v_{2p-1-i},&\mbox{ when } i \mbox{~is~even}\cr
(-1)^{\frac{i-1}{2}}\prod_{j=1}^{\frac{i+1}{2}}(d+2j)\prod_{j=1}^{\frac{i-1}{2}}(2(p-j))v_{2p-1-i},& \mbox{ when }i \mbox{~is~odd}.
\end{cases}
\end{align*}
From the above formula, by a direct computation we have the following observations
 \begin{itemize}
\item [(i)] $\im\varphi'$ is spanned by $\{mv_i\mid{i} \geq p+d+1,m\in{B}\}$ if $d$ is odd, and spanned by $\{mv_i\mid i=1,\cdots,2p-1,m\in{B}\}$ if $d=0$; by  $\{mv_i\mid{i} \geq d+1,m\in{B}\}$ if $d$ is a positive even number;
 \item[(ii)] $\ker\varphi'$ is spanned by $\{mv'_i\mid{i}\geq{p-d-1},~m\in{B}\}$ if $d$ is odd; and by $mv'_{2p-1}$ if $d=0$; and by  $\{mv'_i\mid{i}\geq{2p-d-1},~m\in{B}\}$ if $d$ is a positive even number;
 \item[(iii)] $\ker\varphi$ is spanned by $\{mv_i\mid{i}\geq{p+d+1},m\in{B}\}$ if $d$ is odd, and by $\{mv_i\mid i=1,\cdots,2p-1,m\in{B}\}$ if $d=0$; spanned by $\{mv_i\mid{i}\geq{d+1},m\in{B}\}$ if $d$ is a positive even number.
 \item[(iv)] $\im\varphi$ is spanned by $\{mv'_i\mid{i}\geq p-d-1,m\in{B}\}$ if $d$ is odd, and by $mv'_{2p-1}$ if $d=0$, and by  $\{mv'_i\mid{i}\geq{2p-d-1},~m\in{B}\}$ if $d$ is a positive even number;
 \end{itemize}
 Then we have
\begin{align}\label{case 3 imagevarphi}
\ker\varphi'=\im\varphi=\begin{cases} \sum_{m\in B}\sum_{i=p-d-1}^{2p-1} mv'_i, & \mbox{ when }d \mbox{ odd},\cr
\sum_{m\in B}\sum_{i=2p-d-1}^{2p-1}  mv'_{i}, &\mbox{ when } d \mbox{ even}.
\end{cases}
\end{align}
Therefore, we have
\begin{align}\label{case 3 cokervarphi}
\coker\varphi&=\hat Z_A^{\hat r_\alpha}(\lambda-(2p-1)\alpha)\slash \im\varphi\cr
&=\hat Z_A^{\hat r_\alpha}(\lambda-(2p-1)\alpha)\slash \ker\varphi'\cr
&\cong \im\varphi'=\ker\varphi.
\end{align}
Similarly,  $\coker\varphi'\cong \ker\varphi'$. We complete the proof of (1).

For the proof of (2), we divide our argument into two cases. When $d$ is an odd number, $\ker\varphi$ is generated by $v_{p+d+1}$.  Thus, we can construct the following sequence of homomorphisms in $\mathcal C$
\begin{align*}
{\cdots}{\overset{\varphi_5}{\longrightarrow}}{\hat Z(\lambda-4p\alpha)}&{\overset{\varphi_4}{\longrightarrow}}{\hat Z(\lambda-(3p+d+1)\alpha)}\cr
&{\overset{\varphi_3}{\longrightarrow}}{\hat Z(\lambda-2p\alpha)}
{\overset{\varphi_2}{\longrightarrow}}{\hat Z(\lambda-(p+d+1)\alpha)}{\overset{\varphi_1}{\longrightarrow}}{\ker\varphi}\longrightarrow 0
\end{align*}
where $\varphi_i, i=1,2,3,\cdots$  are  defined  via $\varphi_1(1\otimes 1)=v_{p+d+1}$,  $\varphi_{2j}(1\otimes 1)= Y_\alpha^{p-d-1}\otimes 1$, and
 $\varphi_{2j+1}(1\otimes 1)=Y_\alpha^{p+d+1}\otimes 1$, $j=1,2, \cdots$ respectively.
It is easy to check that this sequence is exact.

 When $d$ is an even,  $\ker\varphi$ is generated by $v_{d+1}$.  We construct the following sequence of homomorphisms in $\mathcal C$
\begin{align*}
{\cdots}{\overset{\varphi_5}{\longrightarrow}}{\hat Z(\lambda-4p\alpha)}&{\overset{\varphi_4}{\longrightarrow}}{\hat Z(\lambda-(2p+d+1)\alpha)}\cr
&{\overset{\varphi_3}{\longrightarrow}}{\hat Z(\lambda-2p\alpha)}
{\overset{\varphi_2}{\longrightarrow}}{\hat Z(\lambda-(d+1)\alpha)}{\overset{\varphi_1}{\longrightarrow}}{\ker\varphi}\longrightarrow 0
\end{align*}
where $\varphi_1$ is  defined  via $\varphi_1(1\otimes 1)=v_{d+1}$, furthermore, $\varphi_i, i=2,3,\cdots$  are  defined  via   $\varphi_{2j}(1\otimes 1)= Y_\alpha^{2p-d-1}\otimes 1$, and
 $\varphi_{2j+1}(1\otimes 1)=Y_\alpha^{d+1}\otimes 1$, $j=1,2, \cdots$ respectively.
It is easy to check that this sequence is exact.  We complete proof.
\end{proof}

\begin{remark} \label{same Gro} The lemma implies that both (twisted) baby Verma modules arising from $\varphi$  define the same class in the Grothendieck group of $\mathcal{C}$.
\end{remark}
\subsection{Base change of exact sequences}
For given two commutative $\bbk$-algebras $A'$ and $A''$,  and a $\bbk$-algebra homomorphism  $f:A'\rightarrow{A''}$, we have  a functor from $\mathcal{C}_{A'}$ to $\mathcal{C}_{A''}$ with $f(M)=M\otimes_{A'}A''$. Especially, for (twisted) baby Verma modules we have $\hat Z_{A''}^{\hat w}(\lambda)=\hat Z_{A'}^{\hat w}(\lambda)\otimes_{A'}A''$.

\subsubsection{} From now on, we  will always set $A$ to be the localization of the polynomial ring $\bbk[t]$ in indeterminant $t$ at the maxiaml ideal generated by $t$ (unless other statement).  Then $A=\bbk+At$. There is a natural  homomorphism $\textsf{p}:A\rightarrow{A/{At}}\cong \bbk$. So we get a functor from $\mathcal{C}_A$ to $\mathcal{C}_\bbk$. Here $\mathcal{C}=\mathcal{C}_\bbk$ is actually the category $(U(\ggg),\frakt)\hmod$.

\begin{lemma} \label{base change diag}
Assume that  $A$ as above, and  $\pi(H_{\alpha})=ct$ for $c\in\bbk^\times$. The following statements hold.

(1) The  diagrams below of homomorphisms in $\mathcal{C}_A$ and $\mathcal{C}$ corresponding to different cases (Cases 1-3)  listed in the previous subsection are commutative:
\begin{itemize}
\item[in (Case 1),]
\begin{equation}       
  \begin{array}{ccccccc}   
  \hat Z_A(\lambda)& \overset{\varphi}{\longrightarrow} & \hat Z_A^{\hat r_{\alpha}}(\lambda-(p-1)\alpha)& \rightarrow & \coker{\varphi} & \rightarrow &0\\  
     \downarrow &  & \downarrow & &\downarrow\\
     \hat Z(\lambda)& \overset{\overline\varphi}{\longrightarrow} &\hat Z^{r_{\alpha}}(\lambda-(p-1)\alpha)& \rightarrow & \coker\overline{\varphi} & \rightarrow &0;\\
  \end{array}
\end{equation}

\item[in (Case 2),]
\begin{equation}       
  \begin{array}{ccccccc}   
  \hat Z_A(\lambda)&\overset{\varphi}{\longrightarrow} & \hat Z_A^{\hat r_{\alpha}}(\lambda-\alpha)& \rightarrow & \coker{\varphi} & \rightarrow &0\\  
     \downarrow &  & \downarrow & &\downarrow\\
     \hat Z(\lambda)&\overset{\overline\varphi}{\longrightarrow} &\hat Z^{s_{\alpha}}(\lambda-\alpha)& \rightarrow & \coker\overline{\varphi} & \rightarrow &0;
  \end{array}
\end{equation}

\item[in (Case 3),]
\begin{equation}       
  \begin{array}{ccccccc}   
  \hat Z_A(\lambda)&{\overset{\varphi}{\longrightarrow}} & \hat Z_A^{\hat r_{\alpha}}(\lambda-(2p-1)\alpha)& \rightarrow & \coker{\varphi} & \rightarrow &0\\  
     \downarrow &  & \downarrow & &\downarrow\\
     \hat Z(\lambda)&{\overset{\overline\varphi}{\longrightarrow}} &\hat Z^{r_{\alpha}}(\lambda-(2p-1)\alpha)& \rightarrow & \coker\overline{\varphi} & \rightarrow & 0
  \end{array}
\end{equation}
\end{itemize}
where $\overline\varphi$ has the same meaning as  the $\varphi$ from Lemma \ref{lemma case 3},  the homomorphisms in the first two columns arises from base change.

(2) The third vertical homomorphisms in each case are bijective.
\end{lemma}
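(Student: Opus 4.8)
The plan is to reduce the whole statement to a single explicit computation — that a certain deformation scalar has $t$-adic valuation one. For part (1), I would first record that the two leftmost vertical arrows are the reduction functor $M\mapsto M\otimes_A\bbk$ along $\textsf{p}\colon A\to A/At\cong\bbk$, and that under the canonical identifications $\hat Z_A^{\hat w}(\mu)\otimes_A\bbk\cong\hat Z^{\hat w}(\mu)$ (highest weight vector to highest weight vector) the map $\varphi\otimes_A\bbk$ sends $1\otimes1$ to the same element as $\overline\varphi$, namely $X_\alpha^{p-1}\otimes1$ in (Case 1), $X_\alpha\otimes1$ in (Case 2), and $X_\alpha^{2p-1}\otimes1$ in (Case 3). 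Since a morphism out of a (twisted) baby Verma module is determined by the image of its highest weight vector, this forces $\varphi\otimes_A\bbk=\overline\varphi$, so the left square commutes. Applying the right-exact functor $-\otimes_A\bbk$ to $\hat Z_A(\lambda)\xrightarrow{\varphi}\hat Z_A^{\hat r_\alpha}(\lambda^{\hat r_\alpha})\to\coker\varphi\to0$ yields the exact row $\hat Z(\lambda)\xrightarrow{\overline\varphi}\hat Z^{\hat r_\alpha}(\lambda^{\hat r_\alpha})\to\coker\varphi\otimes_A\bbk\to0$, hence a canonical isomorphism $\coker\varphi\otimes_A\bbk\xrightarrow{\ \sim\ }\coker\overline\varphi$; the third vertical arrow, being the map on cokernels induced by the commuting left square, factors as $\coker\varphi\twoheadrightarrow\coker\varphi\otimes_A\bbk\xrightarrow{\ \sim\ }\coker\overline\varphi$, so the right square commutes as well.

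For part (2), this factorization shows the third vertical arrow is bijective if and only if $t$ annihilates $\coker\varphi$, i.e.\ $t\,\hat Z_A^{\hat r_\alpha}(\lambda^{\hat r_\alpha})\subseteq\im\varphi$. I would obtain this inclusion from the converse homomorphism $\varphi'$ constructed in each case: since $\varphi$ and $\varphi'$ are morphisms in $\mathcal{C}_A$ between (twisted) baby Verma modules whose highest weight spaces are free of rank one over $A$, the composite $\varphi\circ\varphi'$ preserves weight and highest-weight-ness, hence is a scalar $c'_\lambda\in A$; then $c'_\lambda y=\varphi(\varphi'(y))\in\im\varphi$ for every $y$, so $c'_\lambda\,\hat Z_A^{\hat r_\alpha}(\lambda^{\hat r_\alpha})\subseteq\im\varphi$. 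Thus it suffices to show $c'_\lambda=tu$ for a unit $u\in A^\times$ — apart from the subcases where $\varphi$ is already an isomorphism over $A$ (namely (Case 1) with $d=p-1$ and (Case 2) with $d\neq0$), where $\coker\varphi=0$ and nothing is to be proved.

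It remains to evaluate $c'_\lambda$, which I would read off the explicit formulas for $\varphi,\varphi'$ on the vectors $v_i,v_i'$: formula (\ref{case 2 comp}) in (Case 2), formula (\ref{case 3 comp}) in (Case 3), and the analogous product formula in (Case 1), specialized at $\pi(H_\alpha)=ct$ with $c\neq0$. In each case $c'_\lambda$ comes out as a product of nonzero integer scalars (units of $\bbk$, hence of $A$) and linear factors of the shape $ct+\langle\lambda,\chi_\alpha\rangle+m$ with $m\in\mathbb{Z}$; since $\langle\lambda,\chi_\alpha\rangle\equiv d\ (\mathrm{mod}\ p)$, exactly one of these factors has $m+\langle\lambda,\chi_\alpha\rangle\equiv0\ (\mathrm{mod}\ p)$ and therefore collapses literally to $ct$, while every other linear factor has nonzero constant term and is a unit of $A$. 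This gives $c'_\lambda=tu$ with $u\in A^\times$, completing part (2).

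I expect this last verification to be the only genuine work, and (Case 3) to be where the bookkeeping concentrates: there the exponents $2p-1$ and the even/odd case split of (\ref{case 3 comp}) force one to track the factors $ct+\langle\lambda,\chi_\alpha\rangle+2(p-j+1)$ together with the scalars $2(p-j)$ over the relevant range of $j$, and to confirm — using that $p$ is odd (so $2$ is invertible modulo $p$) and $c\neq0$ — that precisely the index $j$ with $2j\equiv d+2\ (\mathrm{mod}\ p)$ supplies the single power of $t$ while all remaining factors stay invertible. Alternatively, one may recompute the descriptions of $\im\varphi$ and $\im\varphi'$ from Lemma \ref{lemma case 3} (and its (Case 1)--(Case 2) analogues) with $\pi(H_\alpha)=ct$ in place of $\pi(H_\alpha)=0$; either route is a bounded, mechanical check.
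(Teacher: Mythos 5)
Your proposal is correct, and for part (2) it is organized differently from the paper's proof. The paper argues basis-by-basis: it specializes the formulas (\ref{case 2 comp}) and (\ref{case 3 comp}) at $\pi(H_\alpha)=ct$, determines $\im\varphi$ over $A$ explicitly on the spanning set $\{mv_i\}$ (the displayed formulas (\ref{varphi formula A case 2}), (\ref{odd varphi A}), (\ref{even varphi A})), reads off $\coker\varphi$ as a span of classes $(A/At)mv_i'$, and matches this against the $\pi=0$ descriptions (\ref{image varphi case 2}) and (\ref{case 3 imagevarphi}) of $\coker\overline\varphi$; Case 1 is delegated to Jantzen \cite{J2}. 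You instead make the homological reduction explicit -- right-exactness of $-\otimes_A\bbk$ gives $\coker\overline\varphi\cong\coker\varphi\otimes_A\bbk$ and the factorization of the third vertical map, so bijectivity is equivalent to $t\,\coker\varphi=0$ -- and then verify this by a single valuation computation: $\varphi\circ\varphi'$ is multiplication by a scalar $c'_\lambda\in A$ (this uses that the highest weight space of the twisted Verma module is $A$-free of rank one, which holds since $\alpha$ is simple, or can simply be read off the formulas), and $c'_\lambda$ is a unit times $t$ (or a unit, in the subcases where $\varphi$ is invertible over $A$); your bookkeeping of which linear factor degenerates to $ct$ is correct in all three cases, including the Case 3 count using that $2$ is invertible mod $p$. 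What the two routes buy: yours is shorter, treats Case 1 on the same footing as Cases 2 and 3 without invoking \cite{J2}, and isolates the one fact that matters ($c'_\lambda\in tA^\times\cup A^\times$); the paper's more laborious computation, however, also produces the explicit image formulas (\ref{odd varphi A})--(\ref{even varphi A}) that are reused later (in \S\ref{inverse image} and in the computation of $N_\lambda$ in \S\ref{nonzero Xi}), so if you adopted your proof you would still need those formulas separately for the subsequent arguments.
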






\begin{proof} (I) For (Case 1), the same arguments can be done as in \cite{J2}. For the remaining cases, we easily shows the existance of the diagram because we can identify both $\hat Z(\lambda)$ and $\hat Z^{\hat r_\alpha}(\mu)$  with $\hat Z_A(\lambda)\otimes_A(A\slash At)$ and $\hat Z^{\hat r_\alpha}(\mu)\otimes_A(A\slash At)$ respectively.  So we only need to verify that the induced maps between cokernels are bijective for the remaining cases.

(II) We begin with (Case 2). By (\ref{case 2 comp}) we have
\begin{align}\label{varphi formula A case 2}
&\varphi(Amv)=\begin{cases}
Amv'_1,&{v=v_0},\cr
A(ct+d)mv_0',&{v=v_1};
\end{cases}\cr
&\varphi'(Amv')=\begin{cases}
Amv_1,&{v'=v'_0},\cr
A(ct+d)mv_0,&{v'=v'_1},
\end{cases}
\end{align}
where $m$, $v$ and $v'$ are the same as in the proof of Lemma \ref{lemma case 2}. When $\langle\lambda,\chi_\alpha\rangle\not\equiv 0(\mbox{mod }p)$, $ct+d$ is a unit in $A$. In this case both $\varphi$ and $\varphi'$ are isomorphisms. Hence $\coker\varphi=0=\coker\overline\varphi$. Assume $\langle\lambda,\chi_\alpha\rangle\equiv 0(\mbox{mod }p)$, then $\coker\varphi$ is spanned by all $(A\slash At) mv_0'$ with $m\in B$. Comparing with (\ref{image varphi case 2}), we have $\coker\varphi$ is mapped bijectively to $\coker\overline\varphi$.

(III) We now discuss  (Case 3). 
By the arguments in \S\ref{subsec case 3} we have for any $0\leq i \leq 2p-1$, $m\in B$
  \begin{align}\label{varphi formula A}
  \varphi(mv_i)=\begin{cases}
\prod_{j=1}^{\frac{i}{2}}(ct+d+2-2j)\prod_{j=1}^{\frac{i}{2}}(2(p-j))mv'_{2p-1-i},&\mbox{ when } i \mbox{~is~even}\\
\prod_{j=1}^{\frac{i+1}{2}}(ct+d+2-2j)\prod_{j=1}^{\frac{i-1}{2}}(2(p-j))mv'_{2p-1-i},&\mbox{ when } i \mbox{~is~odd},
\end{cases}
\end{align}
where $B$ is  a basis of $U_0(\mathfrak{m})$ as described in the proof of Lemma \ref{lemma case 3}, and $d=\langle\lambda,\chi_\alpha\rangle(\mbox{mod } p),~0\leq i\leq {2p-1}$. In the above formula,  $ct+d+2-2j$ is a unit  in $A$ if $d+2-2j\not\equiv 0 (\mbox{mod }p)$, otherwise it is a unit times $t$ in $A$. We continue our arguments into three cases according to the possibilities of  $d$.

(i) When  $d$ is an odd number, we have
\begin{align}\label{odd varphi A}
\varphi(Amv_i)=\begin{cases}
Amv'_{2p-1-i},&{0\leq{i}<p+d+1},\\
Atmv'_{2p-1-i},&{p+d+1\leq{i}\leq{2p-1}},
\end{cases}
\end{align}
therefore $\coker\varphi$ is spanned by  all $(A/At)mv'_i$ with $m\in{B}$ and $i\leq{p-d-2}$. Comparing with (\ref{case 3 imagevarphi}), we have $\coker\varphi\cong{\coker\overline{\varphi}}$.

(ii) When $d$ is an even number,  we have
\begin{align}\label{even varphi A}
\varphi(Amv_i)=\begin{cases}
Amv'_{2p-1},& 0\leq i< d+1,\\
Atmv'_{2p-1-i},& d+1\leq i\leq 2p-1,
\end{cases}
\end{align}
therefore $\coker\varphi$ is spanned by  all $(A/At)mv'_i$ with $m\in{B}$ and $i\leq 2p-d-2$. Comparing with (\ref{case 3 imagevarphi}), we have $\coker\varphi$ is mapped bijectively to ${\coker\overline{\varphi}}$.

Combining the above, we complete the proof.
\end{proof}

\begin{remark}\label{coker iso remark}
In the above lemma, we can regard $\mathcal{C}$ as a full subcategory of $\mathcal{C}_A$ (see the arguments in the beginning paragraph of the next section). Hence all maps of the diagrams are morphisms in $\mathcal{C}_A$. In particular,  $\coker(\varphi){\overset{\cong}{\longrightarrow}}\coker(\overline\varphi)$ in $\mathcal{C}_A$.
\end{remark}

\begin{remark} \label{c alpha} There exists a $\bbk$-algebra homomorphism $\pi:U^0\rightarrow \bbk[t]$ such that $\pi(H_\alpha)=c_\alpha t, \; \forall \alpha\in \Delta^+$ for some $c_\alpha\in \bbk^\times$. Actually, $\bbk$ is infinite while $\Delta^+$ is finite. So we can find a linear function $\pi_0:\hhh\rightarrow \bbk$ such that $\pi_0(H_\alpha)\ne 0$ for all $\alpha\in \Delta^+$. Extend $\pi_0$ to be a $\bbk$-algebra homomorphism $\pi$ from $U^0$ to $\bbk[t]$ by mapping $H_\alpha$ to $\pi_0(H_\alpha)t$. Then $\pi$ is desired.
\end{remark}

\subsubsection{} \label{inverse image} We look at some inverse images of $\varphi$. In Case 1, $\alpha\in \overline\Delta^+_0$. By the same arguments as reductive Lie algebra case (\cite[\S3.6]{J2}) we have for $m>0$, $\varphi^{-1}(t^m \hat Z_A^{\hat r_\alpha}(\lambda-(p-1)\alpha)\subset t^{m-1}\hat Z_A(\lambda)$, and we can say more that $\varphi^{-1}(t^m \hat Z_A^{\hat r_\alpha}(\lambda-(p-1)\alpha)=t^{m}\hat Z_A(\lambda)$ once $d=p-1$ for $d$ as in Lemma \ref{lemma case 1}.

 In Case 2, $\alpha\in \overline\Delta^+_1$. By (\ref{varphi formula A case 2}), we have for $m>0$, $\varphi^{-1}(t^m \hat Z_A^{\hat r_\alpha}(\lambda-\alpha)\subset t^{m-1}\hat Z_A(\lambda))$, and we can say more that $\varphi^{-1}(t^m \hat Z_A^{\hat r_\alpha}(\lambda-\alpha)=t^{m}\hat Z_A(\lambda)$ once $d\ne 0$.

 In Case 3, $\alpha\in  \Delta^+_1\backslash\overline\Delta^+_1$. By (\ref{odd varphi A}) and (\ref{even varphi A}) we have for $m>0$, $\varphi^{-1}(t^m \hat Z_A^{\hat r_\alpha}(\lambda-(2p-1)\alpha)\subset t^{m-1}\hat Z_A(\lambda)$.

\section{Jantzen filtration and sum formula}

In this section, we will construct a Jantzen filtration and formulate a sum formula for baby Verma modules on $\mathcal{C}$.  For this, we first make some preparations on $\mathcal{C_A}$.

\subsection{}
Throughout this section, we still  set $A$ to be the localization of $\bbk[t]$ at the maximal ideal generated by $t$. Let $Q(A)$ stand for the fractional field of $A$.  Note that there is a canonical $\bbk$-algebra embedding $\bbk[t]\hookrightarrow A$. We can choose a homomorphism $\pi:U^0\rightarrow A$ satisfying the condition in Remark \ref{c alpha}. This is to say,  $\pi(\mathfrak{h})\subset{At}$ satisfy  $\pi(H_\alpha)=c_\alpha t,\; c_\alpha\in \bbk^\times$.
Then  each objects in $\mathcal{C}$ can be regarded as an object in $\mathcal{C}_A$ via the surjection $A$ onto $A\slash tA\cong \bbk$. So we can regard $\mathcal{C}$ as a full subcategory of $\mathcal{C}_A$ consisting of all objects $M$ with $tM=0$.
It is obviously that the simple objects of $\mathcal{C}$ is also simple ones in $\mathcal{C}_A$. Conversely, if $M\in\text{obj}(\mathcal{C}_A)$ is simple in $\mathcal{C}_A$ then $tM=0$ or $tM=M$. However we cannot have $M=tM$   by Nakayama lemma because $M\ne 0$ and $M$ is finitely generated over $A$. So we have $tM=0$, and  $M$ is an object from $\mathcal{C}$, which means that $M$ is also simple in $\mathcal{C}$. 


Next we introduce a new category trsn($\mathcal{C}_A$) as in the reductive Lie algebra case (\textit{cf}. \cite{J2}), which is the subcategory of $\mathcal{C}_A$ and consist with torsion modules in $\mathcal{C}_A$.
The category trsn($\mathcal{C}_A$) can be described as the full subcategory of all objects of finite length in $\mathcal{C}_A$ (\textit{cf}. \cite[\S3.7]{J2}.
Denote by $\mbox{K}(\mbox{trsn}(\mathcal{C}_A))$ the Grothendieck group of the category trsn($\mathcal{C}_A$).

\subsection{} \label{general contru filt} Let $M$ and $M'$ be torsion free modules over $A$.  If $\varphi:M\rightarrow{M'}$ satisfies
  \begin{align}\label{Frac Cond}
  \varphi\otimes \id_{Q(A)}: M\otimes Q(A)\rightarrow M'\otimes Q(A) \mbox{ is an isomorphism}.
\end{align}
Then
$$\coker\varphi\in\text{objtrsn}(\mathcal{C}_A).$$
  Define $$\nu(\varphi):=[\coker\varphi]\in \mbox{K}(\mbox{trsn}(\mathcal{C}_A)).$$
  If $\psi:M'\rightarrow{M''}$ is another homomorphism of torsion free $A$-modules satisfying (\ref{Frac Cond}), then we have
  \begin{align}\label{sum for nu}
  \nu(\psi\circ\varphi)=\nu(\psi)+\nu(\varphi).
  \end{align}
The injectivity of $\psi$ shows that
 $$0\rightarrow{\coker\varphi}\rightarrow{\coker(\psi\circ\varphi)}\rightarrow{\coker\psi}\rightarrow0$$
  is an exact sequence.

We note that $\overline{\varphi}:\overline{M}\rightarrow\overline{M'}$ is homomorphism induced by $\varphi$, where $\overline{M}=M/{tM}$ and $\overline{M'}=M'/{tM'}$. We denote the natural functor from $\mathcal{C}_A$ to $\mathcal{C}$ by $\mathsf{p}$, i.e. $\mathsf{p}(M)=\overline{M}$.
We can define
$$M^i=\{m\in{M}\mid\varphi(m)\in{t^iM'}\}$$
and set $\overline{M}^i$ to be the image of $M^i$ in $\overline M$. Then $\overline M=\overline{M}^0=\overline M$.
It is obvious that $\overline{M}^1$ is the kernel of $\overline\varphi:\overline M\rightarrow \overline{M'}$. Hence
\begin{align}\label{quo filtr}
\overline{M}\slash{\overline{M}^1}\cong \im(\overline{\varphi}).
\end{align}
We turn to  other terms in the filtration $\{\overline{M}^i\}$. In view of  $\overline{M}\in$objtrsn($\mathcal{C}_A$),  there exists $n$ such that $t^n\overline{M}=0$. And $\overline{M}^i$ satisfy that $$\overline{M}=\overline{M}^0\supset{\overline{M}^1}\supset{\cdots}\supset{\overline{M}^n}=0,$$
so the filtration $\{\overline{M}^i\}$ of $\overline{M}=M/tM$ is of finite length.

We also have the next lemma
\begin{lemma} \label{comp nu lemma}
  $\sum_{i=1}^n[\overline{M}^i]=[\nu(\varphi)]$.
\end{lemma}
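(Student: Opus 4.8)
The plan is to compare the filtration $\{\overline{M}^i\}$ directly against the length of $\coker\varphi$ by a counting/multiplicity argument. First I would fix $n$ with $t^n\overline{M}=0$, so that the chain $\overline{M}=\overline{M}^0\supseteq\overline{M}^1\supseteq\cdots\supseteq\overline{M}^n=0$ is genuinely finite and each $[\overline{M}^i]$ is a well-defined class in $\mathrm{K}(\mathrm{trsn}(\mathcal{C}_A))$. The key identification to set up is a comparison between $M^i/(M^{i+1}+tM)$ (a subquotient of $\overline M$) and the graded pieces $t^i\coker\varphi \,/\, t^{i+1}\coker\varphi$ of the $t$-adic filtration on $\coker\varphi$. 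Concretely, multiplication by $t^i$ followed by the projection $M'\twoheadrightarrow\coker\varphi$ induces, from the definition $M^i=\{m\in M\mid\varphi(m)\in t^iM'\}$, a map $M^i\to t^i M'/(t^{i+1}M'+\varphi(M)\cap t^iM')$; using that $M,M'$ are torsion-free over $A$ (so multiplication by $t$ is injective and $t^iM'/t^{i+1}M'\cong M'/tM'$), one checks this descends to an isomorphism
\[
M^i/\bigl(M^{i+1}+ (M\cap tM')\bigr)\;\xrightarrow{\ \sim\ }\; t^i(\coker\varphi)/t^{i+1}(\coker\varphi).
\]
Here the condition \eqref{Frac Cond} guarantees $\coker\varphi$ is torsion, so this $t$-adic filtration is finite and exhausts $\coker\varphi$.

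Next I would assemble the telescoping identities on both sides. On the cokernel side, the finite $t$-adic filtration gives in the Grothendieck group
\[
[\coker\varphi]\;=\;\sum_{i\ge 0}\bigl[\,t^i(\coker\varphi)/t^{i+1}(\coker\varphi)\,\bigr].
\]
On the module side, I want to relate $\sum_{i\ge1}[\overline{M}^i]$ to the sum of the successive quotients $[\overline{M}^i/\overline{M}^{i+1}]$ weighted by multiplicity: since $\overline{M}^i/\overline{M}^{i+1}$ appears inside each of $\overline{M}^1,\dots,\overline{M}^i$, one has the standard rearrangement
\[
\sum_{i\ge 1}[\overline{M}^i]\;=\;\sum_{i\ge 1} i\,[\overline{M}^i/\overline{M}^{i+1}].
\]
Then I would match $\overline{M}^i/\overline{M}^{i+1}$ with $M^i/(M^{i+1}+tM)$ — they agree because $\overline{M}^i$ is by definition the image of $M^i$ in $\overline M=M/tM$ — and feed in the isomorphism above to get $[\overline{M}^i/\overline{M}^{i+1}]=[t^i\coker\varphi/t^{i+1}\coker\varphi]$ for $i\ge 1$. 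The only subtlety is the $i=0$ term: $M\cap tM'$ versus $tM$, which is where \eqref{Frac Cond} is used again (the two coincide after tensoring with $Q(A)$, and torsion-freeness of $M$ forces $M\cap tM' = tM$). Combining, $\sum_{i\ge1}[\overline M^i]=\sum_{i\ge1} i[t^i\coker\varphi/t^{i+1}\coker\varphi]$, which I then need to recognize as $[\coker\varphi]=\nu(\varphi)$.

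The main obstacle, and the step I would spend the most care on, is exactly this last recognition: $\sum_{i\ge 1}i\,[t^i C/t^{i+1}C]=[C]$ for a torsion $A$-module $C$ (with $C=\coker\varphi$). This is \emph{not} literally the $t$-adic filtration sum $\sum_{i\ge 0}[t^iC/t^{i+1}C]=[C]$; the weighting by $i$ is what makes it work, and the cleanest justification is to note that $[t^iC/t^{i+1}C] = [C^{\ge i}]-[C^{\ge i+1}]$ where $C^{\ge i}:=t^iC$, and then Abel-summation: $\sum_{i\ge1}([C^{\ge i}]-[C^{\ge i+1}])\cdot$(shifted index) telescopes to $\sum_{i\ge1}[t^iC/t^{i+1}C]\cdot 1$ — no, one must instead observe that $\sum_{i\ge 1}[\overline M^i]=\sum_{i\ge1}[t^iC]$ directly from $\overline M^i\cong t^iC$ (which is what the isomorphism above really gives, not just its graded version), and then $\sum_{i\ge1}[t^iC]=\sum_{i\ge1}\sum_{j\ge i}[t^jC/t^{j+1}C]=\sum_{j\ge1}j[t^jC/t^{j+1}C]=[C]$ by the weighted count. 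So the honest claim to prove is the refined isomorphism $\overline M^i\cong t^i\coker\varphi$ as objects of $\mathcal{C}_A$ for all $i\ge1$; granting that, Lemma follows by summing over $i$ and using the $t$-adic filtration of $\coker\varphi$ together with $\sum_{i\ge1}[t^iC]=[C]$ (equivalently $[C]=\sum_{i\ge1}[\,\overline M^i\,]$, since $\overline M^i=t^{i-1}\cdot(\text{copy of }C)$ inside a module killed by $t^n$). I would present the argument with $\overline M^i\cong t^i\coker\varphi$ as the central lemma, deduce the identity, and flag that this mirrors \cite[\S3.7]{J2} in the reductive case.
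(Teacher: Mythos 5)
Your argument does not go through: the identifications it rests on are false. Test everything on the basic rank-one example (a single weight line): $M=M'=A$ and $\varphi$ equal to multiplication by $t^{a}$, so $C:=\coker\varphi=A/t^{a}A$ and $M^{i}=t^{\max(0,i-a)}A$, hence $\overline{M}^{i}\cong\bbk$ for $1\le i\le a$ and $\overline{M}^{i}=0$ for $i>a$. Then (i) the claimed isomorphism $\overline{M}^{i}\cong t^{i}C$ fails: $\overline{M}^{i}$ is always killed by $t$, while $t^{i}C\cong A/t^{a-i}A$ has length $a-i$ (take $a=3$, $i=1$); (ii) the graded claim $[\overline{M}^{i}/\overline{M}^{i+1}]=[t^{i}C/t^{i+1}C]$ fails (take $a=2$, $i=1$: the left side is $0$, the right side is $[\bbk]$); (iii) the identity you ultimately need, $\sum_{i\ge1}[t^{i}C]=\sum_{j\ge1}j\,[t^{j}C/t^{j+1}C]=[C]$, is simply not true — already for $a=2$ one gets $[\bbk]\ne 2[\bbk]=[C]$ (the correct unweighted statement is $[C]=\sum_{j\ge0}[t^{j}C/t^{j+1}C]$); and (iv) the auxiliary claim $\varphi^{-1}(tM')=tM$ is also false in general (for $a\ge1$ above, $M^{1}=M$); torsion-freeness only gives $M^{i}\cap tM=tM^{i-1}$, which is the identity actually needed, yielding $\overline{M}^{i}\cong M^{i}/tM^{i-1}\cong(\varphi(M)\cap t^{i}M')/t(\varphi(M)\cap t^{i-1}M')$, not $t^{i}C/t^{i+1}C$. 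So both of the routes you sketch (the weighted graded sum, and the ``refined isomorphism'') collapse at their key step.

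For comparison, the paper disposes of the lemma by the argument of Jantzen \cite[Lemma 3.8]{J2}, which you should follow: everything decomposes into weight spaces, each $M_{\mu}$, $M'_{\mu}$ is a free module of finite rank over the discrete valuation ring $A$, and $\varphi_{\mu}\otimes\id_{Q(A)}$ is bijective, so by the elementary divisor theorem one may choose bases with $\varphi(e_{j})=t^{a_{j}}e'_{j}$. Then $\dim_{\bbk}(\overline{M}^{i})_{\mu}=\#\{j\mid a_{j}\ge i\}$, hence $\sum_{i\ge1}\dim_{\bbk}(\overline{M}^{i})_{\mu}=\sum_{j}a_{j}=\dim_{\bbk}(\coker\varphi)_{\mu}$; thus the two sides of the lemma have the same formal character, and since the characters of the simple objects $\hat L(\mu)$, $\mu\in Y$, are linearly independent (each has highest weight $\mu$ with multiplicity one), equal characters give equal classes in $\mathrm{K}(\mbox{trsn}(\mathcal{C}_A))$. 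If you insist on a basis-free argument, the correct comparison is between the filtration $[C]=\sum_{i\ge0}[(\varphi(M)+t^{i}M')/(\varphi(M)+t^{i+1}M')]$ and the subquotients $M^{i}/M^{i+1}$ and $\overline{M}^{i}$ via the isomorphism displayed above — but the shortcut through the $t$-adic filtration of $\coker\varphi$ with weights, as written, cannot be repaired.
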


\begin{proof} It can be proved by the same arguments as in \cite[Lemma 3.8]{J2}.
\end{proof}


\subsection{Jantzen filtration for baby Verma modules}

\subsubsection{} \label{Cases call} By Theorem \ref{longest element thm}, there is a distinguished element $\hat w_0=\hat{r}_N\cdots{\hat{r}_2\hat{r}_1}$. Still denote $\hat\sigma_0=\id$, and $\hat\sigma_i=\hat{r}_i\cdots{\hat{r}_1}$ for $i=1,\cdots, N$ ($\hat\sigma_N=\hat w_0$).
For $\lambda\in Y$, we extend the notation $\lambda^{\hat r_\alpha}$ to  $\lambda^{\hat\sigma_i}$
\begin{align}\label{def lambda i1}
\lambda^{\hat \sigma_i}:=\lambda-(p-1)(1-\hat\sigma_i)(\rho_0)-(1-\sigma_i)(\rho_1)
\end{align}
where $\hat\sigma_i\rho_0:={1\over 2}\sum_{\alpha\in\hat\sigma_i(\Delta)^+_0}\alpha$ and $\sigma_i\rho_1:={1\over 2}\sum_{\alpha\in\hat\sigma_i(\Delta)^+_1}\alpha$
for  $\hat\sigma_i(\Delta)^+=\hat\sigma_i(\Delta)_0^+\cup\hat\sigma_i(\Delta)_1^+$  the positive root set associated with the simple root system $\hat\sigma_i(\Pi)$. Then we have
\begin{align}\label{def lambda i2}
(\lambda-\mu)^{\hat\sigma_i}=\lambda^{\hat\sigma_i}-\mu \mbox{ for any }\mu\in Y.
\end{align}

We will often denote $\lambda^{\hat \sigma_i}$ by $\lambda_i$ for simplicity.
Next we can describe the inductive relation between those $\lambda_{i-1}$ and $\lambda_{i}$:
\begin{align*}\lambda_{i}=\begin{cases}
\lambda_{i-1}-(p-1)\theta_{i}, &{\mbox{ if }} \theta_{i}\in \overline\Delta_0^+; \cr
\lambda_{i-1}-\theta_{i};  &{\mbox{ if }} \theta_{i}\in \overline\Delta_1^+;\cr
\lambda_{i-1}-(2p-1)\theta_{i}, &{\mbox{ if }} \theta_{i}\in \Delta_1^+\backslash\overline\Delta_1^+.
 \end{cases}
\end{align*}
The inductive calculation shows that  $\lambda_N=\lambda-2(p-1)\rho_0-2\rho_1$. In the sequent arguments,
{\sl {we still speak of (Case 1) for $\theta_i\in \overline \Delta_0^+$ and (Case 2) for $\theta_i\in\overline\Delta_1^+$) and (Case 3) for $\theta_{i}\in \Delta_1^+\backslash \overline \Delta_1^+$.}}

By the same construction as in \S\ref{baby Verma hom}, of $\varphi:\hat Z_A(\lambda)\rightarrow\hat Z_A^{\hat r_\alpha}(\lambda_1)$, we can construct a series of homomorphisms $$\Xi_{i-1}:\hat Z_A^{\hat\sigma_{i-1}}(\lambda_{i-1})\rightarrow{\hat Z_A^{\hat\sigma_i}(\lambda_i)}, i=1,2,\cdots,N$$
which share  the same properties as $\varphi$ presented in \S\ref{baby Verma hom}, and therefore we have a sequence of homomorphisms
$$\hat Z_A(\lambda){\overset{\Xi_0}{\longrightarrow}}{\hat Z_A^{\hat\sigma_1}(\lambda_1)}{\overset{\Xi_1}{\longrightarrow}}{\hat Z_A^{\hat\sigma_2}(\lambda_2)}{\overset{\Xi_2}{\longrightarrow}}\cdots\cdots{\overset{\Xi_{N-2}}{\longrightarrow}}\hat Z_A^{\hat \sigma_{N-1}}(\lambda_{N-1}){\overset{\Xi_{N-1}}{\longrightarrow}}{\hat Z_A^{\hat \sigma_N}(\lambda_N)}.$$
Denote the composition of these sequence of homomorphisms by  $\Xi$, i.e.
 $$\Xi=\Xi_{N-1}\circ\cdots\circ\Xi_{1}\circ\Xi_0:\hat Z_A(\lambda)\longrightarrow {\hat Z_A^{\hat w_0}(\lambda_N)}.$$

\subsubsection{}\label{nonzero Xi} By the construction of all $\Xi_i$'s, we can show that $\mathsf{p}(\Xi)$ is a non-zero homomorphism from $\hat Z(\lambda)$ to $\hat Z^{\hat w_0}(\lambda_N)$. Actually, by a straightforward computation
\begin{align} \label{image Xi v0}
 \Xi(1\otimes 1)=X_1^{r_1}\cdots X_{N}^{r_N}\otimes 1 \in \hat Z_A^{\hat w_0}(\lambda_N)
\end{align}
 with  $X_i=X_{\theta_i}$ and $r_i=p-1$ for $\theta_i$ in (Case 1), $1$ for $\theta_i$ in (Case 2), and $2p-1$ for $\theta_i$ in (Case 3) respectively. Hence $\mathsf{p}(\Xi)$ contains the nonzero vector $X_1^{r_1}\cdots X_{N}^{r_N}\otimes 1 \in \hat Z^{\hat w_0}(\lambda_N)$.

There is another interesting observation which will be useful for the sequent arguments. Set
$$v=Y_N^{r_N}\cdots Y_{1}^{r_1}\otimes 1\in \hat Z_A(\lambda)$$
 with  $Y_i=Y_{\theta_i}$ and $r_i$ as above for $i=1,\cdots,N$.
Then by (\ref{varphi formula A}) we have
$$\Xi_i(Y_{i}^{r_i}\otimes 1)=b_i t\otimes 1$$
  for some unit $b_i$ in $A$ whenever $\theta_i$ is in (Case 3). As to
the other two cases, by \cite[\S3.6]{J2} and (\ref{case 2 comp}) we have
\begin{align*}
&\Xi_i(Y_i^{r_i}\otimes 1)\cr
=&\begin{cases}
 tb_i\otimes 1 \mbox{ if }m_j<p \mbox{ when }\theta_{i} \mbox{ in (Case 1)}, \mbox{ or if }m_i=p\mbox{ when }\theta_{i} \mbox{ in (Case 2)}; \cr
b_i\otimes 1 \mbox{ if }m_j=p \mbox{ when }\theta_{i} \mbox{ in (Case 1)}, \mbox{ or if }m_i\ne p\mbox{ when }\theta_{i} \mbox{ in (Case 2)}
\end{cases}
\end{align*}
where $m_i\in \{1,\cdots,p\}$ with $m_i\equiv \langle\lambda_{i-1},\chi_{\theta_i}\rangle(\mbox{mod }p)$, and  $b_i$ is some unit in $A$. Set
\begin{align*} 
N_\lambda:=\#\{\theta_i\in \overline \Phi_0^+\mid m_i<p-1\}
+\#\{\theta_i\in\overline\Phi^+_1\mid m_i=p)\}
+\# \Phi_1^+\backslash\overline\Phi_1^+.
\end{align*}
By an inductive calculation, we finally have that there is a unit $b\in A$ such that
\begin{align}\label{image Xi}
\Xi(v)=t^{N_\lambda}b\otimes 1.
\end{align}

\begin{remark} At the first glance, the expression of $N_\lambda$ is dependent on the inductive steps. However, from the proof of Theorem \ref{Main Thm} we will read off  a concise expression independent of any inductive steps, which is as follows:
 \begin{align} \label{N lambda}
N_\lambda=\#\{\alpha\in \overline \Delta_0^+\mid m_\alpha<p\}
+\#\{\gamma\in\overline\Delta^+_1\mid m_\gamma=p)\}
+\# \Delta_1^+\backslash\overline\Delta_1^+,
\end{align}
where $m_\theta\in \{1,\cdots,p\}$ with $m_\theta\equiv \langle\lambda+\rho,\chi_\theta\rangle(\mbox{mod }p)$ for $\theta\in \Delta^+$.

\end{remark}

\subsubsection{} Now we investigate more about  both modules involved  from $\Xi$ in the category $\mathcal{C}$. Before that, consider linear dual module  $M^*$ for an superspace  $M=M_\bo+M_\bz$ in $\mathcal{C}_A$.   The module structure on the superspace $M^*=(M_\bo)^*+(M_\bz)^*$  is given via $X\cdot\phi=(-1)^{|X||\phi|+1}\phi\circ X$ for  homogeneous elements $X\in \ggg_{|X|}$ and $\varphi\in M^*_{|\varphi|}$. It is readily shown that $M^*$ is still in the category $\mathcal{C}$. Actually, for $M=\sum_{\mu\in Y}M_\lambda$, we have  $M^*=\sum_{\mu\in Y}(M^*)_\mu$ with $(M^*)_{\mu}=(M_{-\mu})^*$. So it is easy to check $$\hat Z_{0}(\lambda)^*\cong{\hat Z_{0}(-\lambda+(p-1)2\rho_0+2\rho_1)}$$ by comparing the highest weight of $\hat Z_{0}(\lambda)^*$ and $\hat Z_{0}(-\lambda+(p-1)2\rho_0+2\rho_1)$.

\begin{lemma} \label{soc twistest}
The socle of $\hat Z_{0}^{\hat w_0}(\lambda_N)$ is isomorphic to $\hat L(\lambda)$.
\end{lemma}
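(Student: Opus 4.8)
The plan is to identify $\hat Z_0^{\hat w_0}(\lambda_N)$ with the image of the baby Verma module $\hat Z_0(\lambda)$ under a simple-preserving contravariant duality on $\mathcal C$, and then to read off the socle from the already-established fact that $\hat Z_0(\lambda)$ has simple head $\hat L(\lambda)$.

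\emph{Step 1: set up the duality.} I would combine the linear dual $M\mapsto M^*$ (appearing just above the lemma) with the twist by the involution $\tau_0\in\Aut(\ggg)$ from \S\ref{tau}, which satisfies $\tau_0|_\hhh=-\id_\hhh$, $\tau_0(X_\alpha)=Y_\alpha$, $\tau_0^2=\id$, is the differential of $\tau\in\Aut(G)$ with $\tau(t)=t^{-1}$, and hence induces $-\id$ on $Y=X(\frakt)$. Since $\tau_0$ preserves the $p$-operation it descends to an involution of $U_0(\ggg)$ interchanging $\hat U_0(\bbb)$ with $\hat U_0(\bbb^-)$, and compatibility with $\frakt$ via $\tau(t)=t^{-1}$ shows that $M\mapsto M^\vee:=(M^*)^{\tau_0}$ is an exact contravariant functor on $\mathcal C$ with $M^{\vee\vee}\cong M$. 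A weight count shows it fixes the simple objects: $\hat L(\mu)^*$ is irreducible with highest weight equal to the negative of the lowest weight of $\hat L(\mu)$, and the $\tau_0$-twist both negates weights and exchanges the roles of $\nnn^+$ and $\nnn^-$, so $(\hat L(\mu)^*)^{\tau_0}$ is again irreducible of highest weight $\mu$, i.e. $\hat L(\mu)^\vee\cong\hat L(\mu)$.

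\emph{Step 2: compute $\hat Z_0(\lambda)^\vee$.} By Theorem \ref{longest element thm}(2) we have $\hat w_0(\Delta^+)=-\Delta^+$, hence $\hat w_0(\bbb)=\bbb^-$; therefore the $\tau_0$-twist of a module induced from $\bbb$ is induced from $\bbb^-$, and tracking the generator (on which $\hhh$ now acts by the negative of the original weight and which $\nnn^-$ now annihilates) together with a PBW dimension count gives $\hat Z_0(\mu)^{\tau_0}\cong\hat Z_0^{\hat w_0}(-\mu)$ for all $\mu\in Y$. Feeding in the isomorphism $\hat Z_0(\lambda)^*\cong\hat Z_0(-\lambda+2(p-1)\rho_0+2\rho_1)$ established above, together with $\lambda_N=\lambda-2(p-1)\rho_0-2\rho_1$ from \S\ref{Cases call}, yields
\begin{align*}
\hat Z_0(\lambda)^\vee &\cong\hat Z_0\big(-\lambda+2(p-1)\rho_0+2\rho_1\big)^{\tau_0}\\
&\cong\hat Z_0^{\hat w_0}\big(\lambda-2(p-1)\rho_0-2\rho_1\big)=\hat Z_0^{\hat w_0}(\lambda_N).
\end{align*}

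\emph{Step 3: conclude.} The module $\hat Z_0(\lambda)$ has a unique maximal submodule with simple quotient $\hat L(\lambda)$, so applying the exact, contravariant, self-inverse functor $\vee$ turns this into a unique minimal submodule of $\hat Z_0^{\hat w_0}(\lambda_N)$, which is $\big(\operatorname{head}\hat Z_0(\lambda)\big)^\vee=\hat L(\lambda)^\vee\cong\hat L(\lambda)$; thus $\operatorname{soc}\hat Z_0^{\hat w_0}(\lambda_N)\cong\hat L(\lambda)$. The main obstacle is the bookkeeping in Steps 1--2: checking that $\tau_0$ genuinely descends to $U_0(\ggg)$ and to $\mathcal C$, that the identification $\hat Z_0(\mu)^{\tau_0}\cong\hat Z_0^{\hat w_0}(-\mu)$ is an isomorphism of $Y$-graded super modules (respecting the $\bbz_2$-parity conventions on baby Verma modules), and that the duality fixes each $\hat L(\mu)$ on the nose rather than up to a parity shift; once the duality is in place the remainder is formal.
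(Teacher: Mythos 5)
Your proposal is correct and follows essentially the same route as the paper: the paper also composes the linear dual $\hat Z_{0}(\lambda)^*\cong \hat Z_{0}(-\lambda+2(p-1)\rho_0+2\rho_1)$ with the $\tau$-twist to obtain $\ttau(\hat Z_{0}(\lambda)^*)\cong \hat Z_{0}^{\hat w_0}(\lambda_N)$, and then uses that this simple-preserving contravariant duality turns the simple head $\hat L(\lambda)$ of $\hat Z(\lambda)$ into the socle. Your Step 3 in fact spells out the head-to-socle passage a bit more carefully than the paper does, but the argument is the same.
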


\begin{proof} By the arguments in \S\ref{tau}, there is a standard involution $\tau_0\in \Aut(\ggg)$ (and a standard involution $\tau\in \Aut(G)$). This $\tau_0$ or $\tau$ induces a self-equivalence functor  on $\mathcal{C}$, sending  $M$ to $\ttau{M}$ where $\ttau{M}$ is $M$ itself as a superspace, with action as
$X\cdot m=\tau^{-1}(X)m$ for $X\in \ggg$ and $m\in M$. It is easily known that for $M=\sum_{\lambda\in Y}M_\lambda\in \mathcal{C}$
$$\ttau{M}=\sum_{\lambda\in Y}(\ttau{M})_\lambda \mbox{ with }(\ttau{M})_\lambda=M_{-\lambda}.$$
Especially, for $\ttau{\hat Z(\lambda)}\cong\hat U_0(\ggg)\otimes_{U_0(\hhh+\nnn^-)}\bbk_{-\lambda}=\hat Z^{\hat w_0}(-\lambda)$. Thus we have
\begin{align}\label{tau duality}
\ttau(\hat Z_{0}(\lambda)^*)\cong \hat Z_{0}^{\hat w_0}(\lambda_N).
\end{align}
Note that linear duality changes a head into a socle. On the other hand, $\ttau(\hat L_{0}(\lambda)^*)$ is still a simple object in $\mathcal{C}$ with highest weight $\lambda$, thereby must be isomorphic to $\hat L(\lambda)$.  So (\ref{tau duality}) implies that $\mbox{Soc}(\hat Z_{0}^{\hat w_0}(\lambda_N)\cong\hat L(\lambda)$. We complete the proof.
  \end{proof}


\subsubsection{} We return to the homomorphism in $\mathcal{C}_A$
$$\Xi: \hat Z_A(\lambda)\longrightarrow\hat Z_{A}^{\hat w_0}(\lambda_N).$$
 Remark \ref{same Gro} is suitable to each $\Xi_i$. So we have an important consequence that all $\hat Z^{\hat \sigma_i}(\lambda_i)$ define the same class in the Grothendieck group of $\mathcal{C}$ as $\hat Z(\lambda)$, i.e.
 \begin{align} \label{Gro Class Eq}
 [\hat Z^{\hat \sigma_i}(\lambda_i]=[\hat Z(\lambda)], i=0,1,\cdots,N.
 \end{align}
 which will be used later.
By the definition, $\hat Z_A(\lambda)$ and $\hat Z_{A}^{\hat w_0}(\lambda_N)$ are torsion free over $A$. Furthermore, Formula (\ref{varphi formula A case 2}) and Formulas (\ref{odd varphi A})-(\ref{even varphi A}) along with \cite[\S3.6(2)]{J2}  are suitable to all $\Xi_{i}:\hat Z_A^{\hat \sigma_{i}}(\lambda_{i})\rightarrow \hat Z_A^{\hat \sigma_{i+1}}(\lambda_{i+1}), \; i=0,1,\cdots,N-1$. This implies that if we work over the fractional field $Q(A)$ by base change, then for $i$
$$\Xi_{i}\otimes{\id_{Q(A)}}:~\hat Z_A^{\hat\sigma_{i}}(\lambda_{i})\otimes{Q(A)}\rightarrow{\hat Z_{A}^{\hat \sigma_{i+1}}(\lambda_{i+1})\otimes{Q(A)}}$$
 become isomorphisms. Hence $\Xi\otimes \id_{Q(A)}$ becomes an isomorphism.  So $\Xi$ satisfies the condition (\ref{Frac Cond}). Hence we can construct through $\Xi$ the filtration as introduced in \S\ref{general contru filt}.
 By application of general filtration construction in \S\ref{general contru filt} to the baby Verma module case, we can get a filtration $\{\hat Z_A(\lambda)^i\}$  and then $\{\hat Z(\lambda)^i\}$, where $$\hat Z_A(\lambda)^i=\{v\in{\hat Z_A(\lambda)}\mid\Xi(v)\in{t^i\hat Z_{0}^{\hat w_0}(\lambda_N)}\}$$ and $$\hat Z(\lambda)^i\cong \overline{\hat Z_A(\lambda)^i}.$$


\subsection{Jantzen sum formula}
\begin{theorem}\label{Main Thm}
For the above  filtration  $\{\hat Z(\lambda)^i\}$ of $\hat Z(\lambda)$, the following statements hold.

 (1) There is a  sum formula in the Grothendieck group of $(U_0(\ggg),\frakt)\hmod$
\begin{align*}
\sum_{i>0}[\hat Z(\lambda)^i]=&\sum_{\alpha\in \overline{\Delta}^+_0}(\sum_{i\geq0}[\hat Z(\lambda-(ip+m_\alpha)\alpha)]-\sum_{i>0}[\hat Z(\lambda-ip\alpha)])\cr
+&\sum_{\gamma\in{\overline{\Delta}^+_1}, m_\gamma=p}(\sum_{i\geq 0}[\hat Z(\lambda-(2i+1)\gamma)]-\sum_{i>0}[\hat Z(\lambda-2i\gamma)])
\cr
+&\sum_{\beta\in\Delta^+_1\backslash{\overline{\Delta}_1}}
(\sum_{i\geq 0}[\hat Z(\lambda-((2i+\delta_{1,(-1)^{m_\beta-1}})p+m_\beta)\beta)]-\sum_{i>0}[\hat Z(\lambda-(2ip)\beta)])
\end{align*}
where $\delta_{i,j}:=1$ if $i=j$, $\delta_{i,j}:=0$ if $i\ne j$; and $m_\theta\in\{1,\cdots,p-1,p\}$
with
$m_\theta\equiv\langle\lambda+\rho,\chi_{\theta}\rangle(\mbox{mod }p)$ for $\theta\in \Delta^+$.

(2) $\hat Z(\lambda)^i=0$ if and only if $i> N_\lambda$  where $N_\lambda$ is as in (\ref{N lambda}).


(3)   $\hat Z(\lambda)/{\hat Z(\lambda)^1}\cong{\hat L(\lambda)}$.

\end{theorem}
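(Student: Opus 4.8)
The plan is to reduce statement (3) to the identification $\hat Z(\lambda)/\hat Z(\lambda)^1\cong\im(\mathsf{p}(\Xi))$ and then to pin that image down using the socle of the twisted target together with a weight-multiplicity argument. First I would invoke the general filtration machinery of \S\ref{general contru filt}: applying (\ref{quo filtr}) to the homomorphism $\varphi=\Xi\colon\hat Z_A(\lambda)\to\hat Z_A^{\hat w_0}(\lambda_N)$ gives $\hat Z(\lambda)^1=\ker(\mathsf{p}(\Xi))$, hence $\hat Z(\lambda)/\hat Z(\lambda)^1\cong\im(\mathsf{p}(\Xi))$ in $\mathcal C$, where $\mathsf{p}(\Xi)\colon\hat Z(\lambda)\to\hat Z^{\hat w_0}(\lambda_N)$ is the reduction of $\Xi$ modulo $t$. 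By \S\ref{nonzero Xi}, in particular by the explicit formula (\ref{image Xi v0}), the map $\mathsf{p}(\Xi)$ is nonzero, so $\im(\mathsf{p}(\Xi))$ is a nonzero submodule of $\hat Z^{\hat w_0}(\lambda_N)$.

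Next I would feed in the two structural inputs that are already available. On the one hand, $\im(\mathsf{p}(\Xi))$ is a nonzero quotient of the baby Verma module $\hat Z(\lambda)$, which has a unique maximal submodule; a nonzero quotient of a module with simple head inherits that simple head, so $\hat L(\lambda)$ is the head of $\im(\mathsf{p}(\Xi))$. On the other hand, Lemma \ref{soc twistest} says the socle of $\hat Z^{\hat w_0}(\lambda_N)$ is $\hat L(\lambda)$, so it is the unique minimal submodule of $\hat Z^{\hat w_0}(\lambda_N)$; since $\im(\mathsf{p}(\Xi))$ is a nonzero submodule it contains this socle, and therefore its own socle equals $\hat L(\lambda)$ as well.

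Finally I would close with the standard multiplicity-one observation. Comparing $\lambda$-weight spaces shows that $\hat L(\lambda)$ occurs exactly once as a composition factor of $\hat Z(\lambda)$: the $\lambda$-weight space of $\hat Z(\lambda)$ is one-dimensional, while every other composition factor $\hat L(\mu)$ has $\mu<\lambda$ and so contributes nothing to that weight space. Hence $\hat L(\lambda)$ occurs at most once in any subquotient of $\hat Z(\lambda)$. Now $\im(\mathsf{p}(\Xi))$ is a quotient of $\hat Z(\lambda)$ whose head and socle are both $\hat L(\lambda)$; if its length were at least two, the top and the bottom composition factors would be two distinct occurrences of $\hat L(\lambda)$, contradicting multiplicity one. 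Therefore $\im(\mathsf{p}(\Xi))\cong\hat L(\lambda)$, which combined with the first paragraph yields $\hat Z(\lambda)/\hat Z(\lambda)^1\cong\hat L(\lambda)$.

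I do not expect (3) to be the main obstacle: the real work in Theorem \ref{Main Thm} lies in parts (1) and (2), where one must assemble the precise exact sequences of Lemmas \ref{lemma case 1}, \ref{lemma case 2} and \ref{lemma case 3}, track their contributions through (\ref{sum for nu}) and Lemma \ref{comp nu lemma}, and extract the inductive-step-free formula (\ref{N lambda}) for $N_\lambda$. The only slightly delicate point inside the proof of (3) is making the weight-multiplicity step airtight in the $Y$-graded super category, but since the iso-classes of irreducibles in $\mathcal C$ are parametrized by $Y$ with fixed parities, the reductive Lie algebra argument transfers verbatim.
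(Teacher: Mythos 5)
Your treatment of part (3) is correct and is essentially the paper's own argument: you identify $\hat Z(\lambda)/\hat Z(\lambda)^1$ with $\im(\overline\Xi)$ via (\ref{quo filtr}), use (\ref{image Xi v0}) for nontriviality, get the head from the baby Verma structure and the socle from Lemma \ref{soc twistest}, and conclude by multiplicity one of $\hat L(\lambda)$ in $\hat Z(\lambda)$. That part needs no changes.

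The genuine gap is that parts (1) and (2) — the actual content of Theorem \ref{Main Thm} — are only named, not proved. For (1), after $\sum_{i>0}[\hat Z(\lambda)^i]=\sum_j[\coker\Xi_j]$ (Lemma \ref{comp nu lemma} and (\ref{sum for nu})), the work you have skipped is: (a) passing from $[\coker\Xi_j]$ to $[\coker\overline\Xi_j]$, which requires Lemma \ref{base change diag} and Remark \ref{coker iso remark}; (b) rewriting each class $[\hat Z^{\hat\sigma_j}(\lambda_j-\mu)]$ as $[\hat Z(\lambda-\mu)]$, which rests on the Grothendieck-group identity (\ref{Gro Class Eq}) coming from Remark \ref{same Gro} together with the shift property (\ref{def lambda i2}); (c) converting the step-dependent integers $m_j\equiv\langle\lambda_j,\chi_{\theta_{j+1}}\rangle$ into the uniform $m_\theta\equiv\langle\lambda+\rho,\chi_\theta\rangle$, which uses $\langle\hat\sigma_j\rho_0-\hat\sigma_j\rho_1,\chi_{\theta_{j+1}}\rangle=1$ for non-isotropic $\theta_{j+1}$ and $0$ in the isotropic case — without this the sum formula cannot even be stated in terms of $\lambda+\rho$; and (d) the fact from Theorem \ref{longest element thm}(5) that the roots $\theta_1,\dots,\theta_N$ exhaust $\Delta^+$ (with $\theta$ and $2\theta$ identified), which is what turns the sum over $j$ into the stated sum over $\overline\Delta_0^+$, $\overline\Delta_1^+$ and $\Delta_1^+\backslash\overline\Delta_1$. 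For (2), the statement is an equivalence: the vanishing $\hat Z(\lambda)^i=0$ for $i>N_\lambda$ needs the inverse-image estimates of \S\ref{inverse image} applied to each $\Xi_j$, while the nonvanishing at level $N_\lambda$ needs the explicit vector $v=Y_N^{r_N}\cdots Y_1^{r_1}\otimes 1$ with $\Xi(v)=t^{N_\lambda}b\otimes 1$ as in (\ref{image Xi}); moreover the identification of $N_\lambda$ with the root-theoretic expression (\ref{N lambda}) is itself a byproduct of the computation in (1), so it cannot simply be quoted. As written, your proposal establishes (3) but leaves (1) and (2) at the level of a plan.
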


\begin{proof} (1) By Lemma \ref{comp nu lemma} and the formula \ref{sum for nu},  we know that
\begin{align}\label{sum coker formula}
\sum_{i>0}[\hat Z(\lambda)^i]=\sum_{j=0}^{N-1}[\coker\Xi_j].
\end{align}
Thanks to Lemma \ref{base change diag} along with Remark \ref{coker iso remark}, we know $[\coker(\Xi_j)]=[\coker(\overline\Xi_j)]$ for all $j=0,\cdots, N-1$ where $\overline\Xi_j:\hat Z^{\hat \sigma_j}(\lambda_j)\rightarrow \hat Z^{\hat \sigma_{j+1}}(\lambda_{j+1})$, a homomorphism in $\mathcal{C}$ which is gotten from $\Xi_i$ by base change arising from  the projection  $A\twoheadrightarrow\bbk\cong A\slash tA$.

 For the further arguments, we have to  divide them into three  cases,  according to the situations for $\theta_{j+1}$.

(i) Suppose $\theta_{j+1}\in \Delta^+_1\backslash \overline\Delta_1^+$.
Lemma \ref{lemma case 3} is suitable to $\Xi_j$. Replace $d$ in Lemma \ref{lemma case 3} by $m_j$, i.e. $m_j\in \{0,1,\cdots,p-1\}$ with $m_j\equiv\langle \lambda_{j},\chi_{\theta_{j+1}}\rangle (\mbox{mod }p)$. So we have
 $$[\coker\overline\Xi_j]=\sum_{i\geq 0}[\hat Z^{\sigma_{j}}(\lambda_{j}-((2i+\delta_{1,(-1)^{m_j}})p+m_j+1)\theta_{j+1})]-\sum_{i>0}[\hat Z^{\sigma_{j}}(\lambda_{j}-(2ip\theta_{j+1}))].$$
According to (\ref{Gro Class Eq}), we know
$$[\hat Z^{\sigma_{j}}(\lambda_{j})]=[\hat Z(\lambda)].$$
Replacing $\lambda$ by $\lambda-2ip\theta_{j+1}$ or
$\lambda-((2i+\delta_{1,(-1)^{m_j}})p+m_j+1)\theta_{j+1}$, then by the definition of $\lambda_{j}=\lambda^{\hat\sigma_j}$ (\ref{def lambda i1}) and its property  (\ref{def lambda i2}) we have
$$[\hat Z^{\sigma_{j}}(\lambda_{j}-2ip\theta_{j+1})]=[\hat Z(\lambda-2ip\theta_{j+1})]$$
and
$$[\hat Z^{\sigma_{j}}(\lambda_{j}-((2i+\delta_{1,(-1)^{m_j}})p+m_j+1)\theta_{j+1})]=[\hat Z(\lambda-((2i+\delta_{1,(-1)^{m_j}})p+m_j+1)\theta_{j+1})].$$
 So we have
$$[\coker\Xi_j]=\sum_{i\geq 0}[\hat Z(\lambda-((2i+\delta_{1,(-1)^d})p+m_j+1)\theta_{j+1})]-\sum_{i>0}[\hat Z(\lambda-(2ip)\theta_{j+1})].$$

(ii) Suppose $\theta_{j+1}\in \overline\Delta_1^+$. By the same arguments as above, with application of Lemma \ref{lemma case 2} we get for $m_j=0$
$$[\coker\Xi_j]=\sum_{i\geq0}[\hat Z(\lambda-(2i+1)\theta_{j+1})]-\sum_{i>0}[\hat Z(\lambda-2i\theta_{j+1})].$$
As to the case when $m_j\ne 0$, $\coker\Xi_j=\coker\overline\Xi_j=0$ by Lemma \ref{lemma case 2}.

(iii) Suppose $\theta_{j+1}\in \overline\Delta_0^+$. By the same arguments as the above (i), with application of Lemma \ref{lemma case 1} we get for the case when $m_j<p-1$
$$[\coker\Xi_j]=\sum_{i\geq0}[\hat Z(\lambda-(ip+m_j+1))\theta_{j+1}]-\sum_{i>0}[\hat Z(\lambda-ip\theta_{j+1})]$$
where   $m_j$'s are in the same sense as the $d$ from  Lemma \ref{lemma case 1}, i.e. $m_j\in \{0,1,\cdots,p-1\}$ with $m_j=\langle\lambda_{j},\chi_{\theta_{j+1}}\rangle (\mbox{mod }p)$. The above formula is also  suitable to the case  when $m_j=p-1$ because in the  case,  $\coker\Xi_j=0$ by Lemma \ref{lemma case 1}, and the sum on the right-hand side becomes $0$.

Summing up, Formula (\ref{sum coker formula}) becomes
\begin{align*}
\sum_{i>0}[\hat Z(\lambda)^i]=&\sum_{\theta_{j+1}\in \overline{\Delta}^+_0}(\sum_{i\geq0}[\hat Z(\lambda-(ip+m_j+1)\theta_{j+1})]-\sum_{i>0}[\hat Z(\lambda-ip\theta_{j+1})])\cr
+&\sum_{\theta_{j+1}\in{\overline{\Delta}^+_1},m_j=0}(\sum_{i\geq 0}[\hat Z(\lambda-(2i+1)\theta_{j+1})]-\sum_{i>0}[\hat Z(\lambda-2i\theta_{j+1})])
\cr
+&\sum_{\theta_{j+1}\in\Delta^+_1\backslash{\overline{\Delta}_1}}(\sum_{i\geq 0}[\hat Z(\lambda-((2i+\delta_{1,(-1)^{m_j}})p+m_j+1)\theta_{j+1})]\cr
&\quad \quad\qquad-\sum_{i>0}[\hat Z(\lambda-(2ip)\theta_{j+1})]).
\end{align*}
Set $m_j\in \{0,1,\cdots,p-1\}$ with $m_j\equiv\langle\lambda_{j},\chi_{\theta_{j+1}}\rangle (\mbox{mod }p)$. In the first and third summands on the right hand side of the above formula,
\begin{align*}
m_j+1&\equiv\langle\lambda_{j},\chi_{\theta_{j+1}}\rangle+1 (\mbox{mod }p)\cr
&=\langle\lambda-(p-1)(\rho_0-\sigma_j\rho_0)-(\rho_1-\sigma_j\rho_1),\chi_{\theta_{j+1}}\rangle+1(\mbox{mod }p)\cr
&=\langle\lambda+(\rho_0-\rho_1), \chi_{\theta_{j+1}}\rangle-\langle\sigma_j(\rho_0)-\sigma_j(\rho_1), \chi_{\theta_{j+1}}\rangle+1 (\mbox{mod }p).
\end{align*}
Note that by the definition, $\sigma_j(\rho_0)-\sigma_j(\rho_1)$ is the Weyl vector with respect to $\sigma_j(\Delta)^+$ the positive root set corresponding to the fundamental system $\sigma_j(\Pi)$, and $\theta_{j+1}$ is a simple root of $\sigma_j(\Pi)$ (Theorem \ref{longest element thm}(1)), and non-isotropic. By \cite[Proposition 1.33]{CW} and \cite[\S3.1]{FG}, we have $\langle\sigma_j(\rho_0)-\sigma_j(\rho_1), \chi_{\theta_{j+1}}\rangle={1\over 2}\langle\theta_{j+1}, \chi_{\theta_{j+1}}\rangle=1$. Hence, modulo $p$ we can replace $m_j+1$ by
$\langle\lambda+\rho,\chi_{\theta_{j+1}}\rangle$ with $\theta_{j+1}$ running over $\overline\Delta_0^+\cup (\Delta_1^+\backslash\overline\Delta_1^+)$.

For $\theta_{i+1}\in \overline\Delta_1^+$, by the same arguments as above we have $\langle\lambda_j,\chi_{\theta_{j+1}}\rangle\equiv \langle\lambda+\rho,\chi_{\theta_{j+1}}\rangle(\mbox{mod }p)$ because  $(\theta_{j+1}, \theta_{j+1})=0$. So modulo $p$ we can regard $m_j=\langle\lambda+\rho,\chi_{\theta_{j+1}}\rangle$.

Thus, we finally get the desired sum formula as expressed in the theorem.
We prove the statement (1).

(2) Now we apply the arguments on some inverse images of $\varphi$ in \S\ref{inverse image} to all $\Xi_{j}$. we can describe an inverse image of
$\varphi$ for $m>0$:
$$\Xi_j^{-1}(t^m\hat Z_A^{\hat\sigma_{j+1}}(\lambda_{j+1}))\subset t^{m-1}\hat Z_A^{\hat\sigma_{j}}(\lambda_{j})$$
 when $\theta_{j+1}\in \Delta_1^+\backslash \overline \Delta_1^+$, i.e. in (Case 3) as called in \S\ref{Cases call}. As to the other two cases  ((Case 1) for $\overline \Delta_0^+$ and (Case 2) for $\overline\Delta_1^+$), we have
\begin{align*}
&\Xi_j^{-1}(t^m\hat Z_A^{\hat\sigma_{j+1}}(\lambda_{j+1}))\cr
=&\begin{cases}
\subset t^{m-1}\hat Z_A^{\hat\sigma_{j}}(\lambda_{j}) \mbox{ if }m_j<p-1 \mbox{ when }\theta_{j+1} \mbox{ in (Case 1)}, \mbox{ or if }m_j=0\mbox{ when }\theta_{j+1} \mbox{ in (Case 2)}; \cr
=t^{m}\hat Z_A^{\hat\sigma_{j}}(\lambda_{i})  \mbox{ if }m_j=p-1 \mbox{ when }\theta_{j+1} \mbox{ in (Case 1)}, \mbox{ or if }m_j\ne 0\mbox{ when }\theta_{j+1} \mbox{ in (Case 2)}.
\end{cases}
\end{align*}
 So we have
$$\Xi^{-1}(t^{N_\lambda+1} \hat Z_A^{\hat w_0}(\lambda_N))\subset t\hat Z_A(\lambda)$$
which implies that  $\hat Z(\lambda)^{N_\lambda+1}=0$. On the other hand, by (\ref{image Xi}) we have known  that there is a distinguished vector $v=Y_1^{r_1}\cdots Y_N^{r_N}\otimes 1\in \hat Z_A(\lambda)^{N\lambda}$. We can think that  the nonzero vector $v$ is contained in $Z(\lambda)^{N_\lambda}$. The statement (2) is proved.

(3) From  (\ref{quo filtr}), it follows that  $$\hat Z(\lambda)\slash {\hat Z(\lambda)}^1\cong\im(\overline\Xi).$$
Note that $\hat Z(\lambda)$ has a unique simple quotient $\hat L(\lambda)$, and $\overline\Xi$ is a nontrivial homomorphism (see the arguments around (\ref{image Xi v0})). Hence $\im(\overline\Xi)$ has a simple head isomorphic $\hat L(\lambda)$.  From  $[\hat Z(\lambda):\hat L(\lambda)]=1$, it follows that $[\im(\overline\Xi):\hat L(\lambda)]=1$. On the other hand, by Lemma \ref{soc twistest} we know that $\hat Z^{\hat w_0}(\lambda_N)$ has a simple socle isomorphic to $\hat L(\lambda)$, thereby the submodule $\im(\overline \Xi)$  naturally has a simple socle isomorphic to $\hat L(\lambda)$.
Hence  $\im(\overline\Xi)\cong \hat L(\lambda)$.
We complete the proof of the statement (3).
\end{proof}

\section{Strong linkage principle}

In the concluding section, we will use Theorem \ref{Main Thm} to get a strong  linkage principle in $\mathcal{C}$.

\subsection{} Set $\Delta_c^+= \overline\Delta_0^+\cup (\Delta^+_1\backslash \overline\Delta_1^+)$. Then the super reflection $\hat r_\alpha$ is a real reflection, simply written as $r_\alpha$. Denote by $r_{\alpha,n}$ for $n\in \bbz$ the affine reflection given by  given by $ r_{\alpha,n}(\lambda)=r_\alpha(\lambda)+n\alpha$ for any $\lambda\in Y$.  Denote by $W_p$ the affine Weyl group  generated by all $r_{\alpha,np}$ with $n\in\bbz$ and $\alpha\in \Delta_c^+$. Define $w\cdot\lambda=w(\lambda+\rho)-\rho$ for $w\in W_p$. 

\begin{definition} Let $\lambda,\mu\in Y$.
\begin{itemize}
\item[(1)] Call $\mu\uparrow \lambda$ if either $\mu=\lambda$, or there are affine reflections $r_1,\cdots,r_s\in W_p$ such that
$$\lambda\geq r_1\cdot\lambda\geq r_2r_1\cdot\lambda\geq \cdots\geq (r_s\cdots r_2r_1)\cdot\lambda=\mu.$$
\item[(2)] Call $\mu\upuparrows\lambda$ if $\mu\leq\lambda$ and there exist  $\mu',\lambda'\in Y$ 
    such that  $\mu'\uparrow \lambda'$  with $\mu=\mu'$ modulo $\bbz\overline\Delta_1^+$ and  $\lambda=\lambda'$  modulo $\bbz\overline\Delta_1^+$. 
       Here $\bbz\overline\Delta_1^+$ means the free abelian group spanned by $\overline\Delta_1^+$.
\end{itemize}
\end{definition}

Note that the relation $\uparrow$ is transitive. 
So, it is easy to see that the relation $\upuparrows$ is transitive, this is to say, if $\mu\upuparrows\lambda$ and $\kappa\upuparrows \mu$ then $\kappa\upuparrows \lambda$.

\begin{theorem} \label{linkage thm} Let $\lambda,\mu\in Y$. If $[\hat Z(\lambda),\hat L(\mu)]\ne 0$, then $\mu\upuparrows \lambda$.
\end{theorem}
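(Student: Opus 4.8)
The plan is to read composition multiplicities off the Jantzen sum formula of Theorem~\ref{Main Thm} and feed them into an induction on the height of $\lambda-\mu$. Since $\hat Z(\lambda)$ is a highest weight module of highest weight $\lambda$, the hypothesis $[\hat Z(\lambda):\hat L(\mu)]\ne 0$ already forces $\mu\le\lambda$, and if $\mu=\lambda$ the conclusion $\mu\upuparrows\lambda$ is trivial; so I may assume $\mu<\lambda$. The crux is the following claim, which I would isolate as a lemma: \emph{every baby Verma module $\hat Z(\nu)$ occurring with a positive sign on the right-hand side of the sum formula in Theorem~\ref{Main Thm}(1) satisfies $\nu<\lambda$ and $\nu\upuparrows\lambda$}. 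Granting this, suppose $\mu<\lambda$ with $[\hat Z(\lambda):\hat L(\mu)]>0$. By Theorem~\ref{Main Thm}(3) we have $\hat Z(\lambda)/\hat Z(\lambda)^1\cong\hat L(\lambda)$, so $\hat L(\mu)$ (with $\mu\ne\lambda$) is a composition factor of $\hat Z(\lambda)^1$, hence appears with positive multiplicity in $\sum_{i>0}[\hat Z(\lambda)^i]$. Comparing the $\hat L(\mu)$-coefficient on the two sides of the sum formula, and using that the negative-sign terms $\hat Z(\lambda-ip\alpha),\ \hat Z(\lambda-2i\gamma),\ \hat Z(\lambda-2ip\beta)$ can only contribute non-negatively, some positive-sign term $\hat Z(\nu)$ must satisfy $[\hat Z(\nu):\hat L(\mu)]>0$, with $\nu<\lambda$ and $\nu\upuparrows\lambda$.

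To prove the claim I would treat the three families of positive-sign terms separately. For $\alpha\in\overline\Delta_0^+$ the term is $\hat Z(\lambda-(ip+m_\alpha)\alpha)$ with $i\ge 0$, $1\le m_\alpha\le p$, and $m_\alpha\equiv\langle\lambda+\rho,\chi_\alpha\rangle\ (\mathrm{mod}\ p)$; writing $\langle\lambda+\rho,\chi_\alpha\rangle=m_\alpha+kp$ one gets $\lambda-(ip+m_\alpha)\alpha=r_{\alpha,(k-i)p}\cdot\lambda$, which is $\le\lambda$ since the coefficient $ip+m_\alpha$ is positive, so this weight is $\uparrow\lambda$ and a fortiori $\upuparrows\lambda$. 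For $\beta\in\Delta_1^+\setminus\overline\Delta_1^+$ the same computation applies: $\beta$ is nonisotropic, so $\beta(H_\beta)=2$ and $r_\beta$ acts as an ordinary reflection, and $r_{\beta,np}\in W_p$; the coefficient $(2i+\delta_{1,(-1)^{m_\beta-1}})p+m_\beta$ is positive and congruent to $\langle\lambda+\rho,\chi_\beta\rangle$ mod $p$, so the weight is $r_{\beta,np}\cdot\lambda\le\lambda$ for a suitable $n$, hence $\uparrow\lambda$. The genuinely super case is $\gamma\in\overline\Delta_1^+$ with $m_\gamma=p$: there the term is $\hat Z(\lambda-(2i+1)\gamma)$, and since $(2i+1)\gamma\in\bbz\overline\Delta_1^+$ one has $\lambda-(2i+1)\gamma\equiv\lambda\ (\mathrm{mod}\ \bbz\overline\Delta_1^+)$ and $\lambda-(2i+1)\gamma<\lambda$; taking $\mu'=\lambda'=\lambda$ (so that $\mu'\uparrow\lambda'$ holds trivially) in the definition of $\upuparrows$ yields $\lambda-(2i+1)\gamma\upuparrows\lambda$. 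This is exactly why the theorem needs the relation $\upuparrows$ rather than $\uparrow$.

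Finally I would run the induction on the non-negative integer $\mathrm{ht}(\lambda-\mu)$, the sum of the coefficients of $\lambda-\mu$ in the basis $\Pi$. The base case $\mathrm{ht}(\lambda-\mu)=0$ is $\mu=\lambda$. For $\mu<\lambda$, the previous paragraph produces a positive-sign term $\hat Z(\nu)$ with $\nu\upuparrows\lambda$, $\nu<\lambda$ and $[\hat Z(\nu):\hat L(\mu)]>0$. If $\mu=\nu$ we are done; otherwise $\mu<\nu$ forces $\mathrm{ht}(\nu-\mu)<\mathrm{ht}(\lambda-\mu)$, so by the inductive hypothesis $\mu\upuparrows\nu$, and since $\upuparrows$ is transitive (noted right after its definition) we conclude $\mu\upuparrows\lambda$. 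I expect the main obstacle to be the claim, and within it the bookkeeping that identifies each sum-formula weight with a $W_p$-conjugate of $\lambda$ lying below $\lambda$ (for the even and nonisotropic odd roots) versus a $\bbz\overline\Delta_1^+$-translate of $\lambda$ (for the isotropic odd roots), including the parity case distinction hidden in $\delta_{1,(-1)^{m_\beta-1}}$ and the constraint $m_\gamma=p$; one should also check that the superficially infinite sums in Theorem~\ref{Main Thm}(1) have, for each fixed $\mu$, only finitely many terms contributing to the $\hat L(\mu)$-coefficient, so that the coefficient comparison is legitimate.
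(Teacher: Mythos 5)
Your proposal is correct and follows essentially the same route as the paper: induction on $\mathrm{ht}(\lambda-\mu)$, using Theorem \ref{Main Thm}(3) to place $\hat L(\mu)$ inside $\hat Z(\lambda)^1$, the sum formula to extract a positive-sign term $\hat Z(\nu)$ containing $\hat L(\mu)$, and transitivity of $\upuparrows$. The only cosmetic difference is that the paper packages your ``claim'' as Lemma \ref{linkage lemma} (a chain $\lambda_{i+1}\upuparrows\lambda_i$ descending from $\lambda$), whereas you identify each positive-sign weight directly as $r_{\alpha,np}\cdot\lambda$ or a $\bbz\overline\Delta_1^+$-translate of $\lambda$, which amounts to the same bookkeeping.
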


We will leave the proof of the theorem in \S\ref{proof sub}.

\begin{remark} One can  compare the strong linkage principle formulated here  with some related results from \cite{Marko, MarkoZ2}).
\end{remark}

\subsection{}\label{linkage rel} For $\lambda\in Y$ and $\theta\in \Delta_c^+\cup \overline\Delta^+_1$, keep the notation  $m_\theta$ as in Theorem \ref{Main Thm}. We can write $\langle\lambda+\rho,\chi_\theta\rangle=np+m_\theta$ for some $n\in \bbz$, and make the following convention:
\begin{itemize}
\item{}
When $\theta$ is in (Case 1), set for $i\geq 0$
$$\lambda_{2i}=\lambda-ip\theta,\;\; \lambda_{2i}=\lambda-(ip+m_\theta)\theta;$$
\item{}
When $\theta$ is in (Case 2), set for $i\geq 0$
$$\lambda_{2i}=\lambda-2i\theta,\;\; \lambda_{2i+1}=\lambda-(2i+1)\theta;$$
\item{}
When $\theta$ is in (Case 3), set for $i\geq 0$
$$\lambda_{2i}= \lambda-2ip\theta,\;\; \lambda_{2i+1}=\lambda-((2i+\delta_{1,(-1)^{m_\theta-1}})p+m_\theta)\theta$$
\end{itemize}
Then the summand corresponding to $\theta$ on the right hand side of the sum formula in Theorem \ref{Main Thm}(1) becomes
$$\sum_{i\geq 0}[\hat Z(\lambda_{2i+1})]-\sum_{i>0}[\hat Z(\lambda_{2i})].$$

\begin{lemma} \label{linkage lemma} For  given $\lambda\in Y$ and $\theta\in \Delta^+_c\cup\overline\Delta^+_1$, $\lambda_{i+1}\upuparrows\lambda_i$ for all $i\geq 0$.
\end{lemma}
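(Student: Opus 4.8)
The plan is to treat the three cases for $\theta$ separately, producing in each the witnesses demanded by the definition of $\upuparrows$. I first record two simple reductions: $\mu\uparrow\lambda$ implies $\mu\upuparrows\lambda$ (take $\mu'=\mu$, $\lambda'=\lambda$), and a chain of length one already yields $\uparrow$. Hence in Cases 1 and 3, where $\theta\in\Delta_c^+$ is a real root, it suffices to check for each consecutive pair $(\lambda_i,\lambda_{i+1})$ that (a) $\lambda_i-\lambda_{i+1}$ is a non-negative integer multiple of $\theta$, hence lies in $\bbz_{\geq 0}\Pi$ so that $\lambda_{i+1}\leq\lambda_i$; and (b) there is an integer $m$ with $r_{\theta,mp}\cdot\lambda_i=\lambda_{i+1}$. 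For (b) one uses $r_{\theta,mp}\cdot\nu=\nu-(\langle\nu+\rho,\chi_\theta\rangle-mp)\theta$ together with $\langle\theta,\chi_\theta\rangle=2$ (valid for $\theta\in\Delta^+\backslash\overline\Delta_1^+$, see \S\ref{Chevalley Basis}) and the convention $\langle\lambda+\rho,\chi_\theta\rangle=np+m_\theta$, $1\leq m_\theta\leq p$, fixed in \S\ref{linkage rel}; then $m$ is read off by matching coefficients of $\theta$.

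Carrying this out, in Case 1 one gets $\lambda_{2i+1}=r_{\theta,(n-2i)p}\cdot\lambda_{2i}$ and $\lambda_{2i+2}=r_{\theta,(n-2i-1)p}\cdot\lambda_{2i+1}$, with consecutive differences $m_\theta\theta$ and $(p-m_\theta)\theta$. In Case 3 the shift $\delta_{1,(-1)^{m_\theta-1}}$ makes $\lambda_{2i+1}$ depend on the parity of $m_\theta$, so the verification splits into two sub-cases; in each the pairs are still joined by a single $r_{\theta,mp}$ --- e.g. for $m_\theta$ even, $\lambda_{2i+1}=r_{\theta,(n-4i)p}\cdot\lambda_{2i}$ and $\lambda_{2i+2}=r_{\theta,(n-4i-2)p}\cdot\lambda_{2i+1}$ --- and the differences, $(2p-m_\theta)\theta$ or $(p-m_\theta)\theta$ depending on parity, are still non-negative multiples of $\theta$.

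In Case 2, $\theta\in\overline\Delta_1^+$ is isotropic, so $r_\theta\notin W_p$ and the above fails; here I would use the congruence freedom built into $\upuparrows$. From the formulas in \S\ref{linkage rel}, $\lambda_{i+1}=\lambda_i-\theta$ for every $i\geq 0$, so $\lambda_{i+1}\leq\lambda_i$ (as $\theta$ is a positive root) and $\lambda_{i+1}\equiv\lambda_i\pmod{\bbz\overline\Delta_1^+}$ (as $\theta\in\overline\Delta_1^+$). Choosing $\mu'=\lambda'=\lambda_i$, for which $\mu'\uparrow\lambda'$ is trivial, the definition of $\upuparrows$ is met, whence $\lambda_{i+1}\upuparrows\lambda_i$.

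The main obstacle I anticipate is organizational rather than conceptual: the bookkeeping in Case 3, where one must separately confirm both $\lambda_{2i+2}\leq\lambda_{2i+1}$ and the exact power of $p$ in the affine reflection in each parity sub-case. Once the bound $1\leq m_\theta\leq p$ is used consistently, all inequalities $\lambda_{i+1}\leq\lambda_i$ follow, and what remains is a finite routine computation.
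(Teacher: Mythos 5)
Your proposal is correct and follows essentially the same route as the paper: a case-by-case check in which, for $\theta\in\Delta_c^+$ (Cases 1 and 3), each consecutive pair $\lambda_{i+1}\leq\lambda_i$ is realized by a single affine reflection $r_{\theta,mp}\cdot$ (your computed shifts agree with the paper's, using $\langle\theta,\chi_\theta\rangle=2$ and $m_\theta\in\{1,\dots,p\}$), while for $\theta\in\overline\Delta_1^+$ (Case 2) the relation $\lambda_{i+1}=\lambda_i-\theta$ gives $\upuparrows$ directly via the modulo-$\bbz\overline\Delta_1^+$ freedom, a step the paper dismisses as obvious and you spell out correctly.
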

\begin{proof} We prove this lemma by case-by-case arguments.

(i) When $\theta$ is in (Case 1), from $\lambda\geq\lambda_1=r_{\theta,np}\cdot\lambda$ it follows that $\lambda_1\uparrow \lambda_0=\lambda$. Furthermore, we get for $i>0$, $\lambda_{2i-1}\geq \lambda_{2i}=r_{\alpha, (n-(2i-1))p}\cdot\lambda_{2i-1}$ and $\lambda_{2i}\geq \lambda_{2i+1}=r_{\theta,(n-2i)p}\cdot\lambda_{2i}$. In this case, the claim is true.

(ii) When $\theta$ is in (Case 2), the claim is obviously true.

(iii) When $\theta$ is in (Case 3), set $\epsilon= \delta_{1,(-1)^{m_\theta-1}}$. From $\lambda\geq\lambda_1=r_{\theta,(n-(2+\epsilon))p}\cdot\lambda$ it follows that $\lambda_1\uparrow \lambda_0=\lambda$. Furthermore, we get for $i>0$, $\lambda_{2i-1}\geq \lambda_{2i}=r_{\theta, (n-(4i-2)-\epsilon))p}\cdot\lambda_{2i-1}$ and $\lambda_{2i}\geq \lambda_{2i+1}=r_{\theta,(n-4i-\epsilon)p}\cdot\lambda_{2i}$. In this case, the claim is true.

Summing up, we complete the proof.
\end{proof}

\subsection{}\label{proof sub} Now we prove Theorem \ref{linkage thm}. If $[\hat Z(\lambda),\hat L(\mu)]\ne 0$, then $\mu\leq\lambda$. Hence we can prove the theorem by induction on the height of $\lambda-\mu$ which is equal to $\sum_{\alpha\in \Pi}c_\alpha$ for $\lambda-\mu=\sum_{\alpha\in \Pi}c_\alpha\alpha$, $c_\alpha\in \bbz_{\geq 0}$.

When $\mu=\lambda$,  the claim is obvious. We suppose $\mu<\lambda$. By Theorem \ref{Main Thm}(3), $\hat L(\mu)$ has to be a  composition factor of the first term $\hat Z(\lambda)^1$ of the Jantzen filtration. Owing to Theorem \ref{Main Thm}(1), there exists $\theta\in \Delta^+_c\cup\overline\Delta^+_1$ such that $\hat L(\mu)$ is a composition factor of $\hat Z(\lambda_{2i+1})$ for some $i\geq 0$, using the notation from \S\ref{linkage rel}. By induction we get $\mu\upuparrows \lambda_{2i+1}$. Combining with Lemma \ref{linkage lemma} and the transitive property of the relation $\upuparrows$, we complete the proof of the  theorem.

\subsection{} By the same arguments  as above (with checking more in (Case 2)), we can give another version of strong linkage principle, which can be regarded as an optimal version.
\begin{prop} Let $\lambda,\mu\in Y$. If $[\hat Z(\lambda),\hat L(\mu)]\ne 0$, then there exist $s\in \bbn$ and a series of weights $\lambda_i$, $i=0,1,\cdots,s$ satisfying
\begin{align}\label{second linked}
\lambda&=\lambda_0\geq \lambda_1\geq \cdots \geq \lambda_{s-1}\geq \lambda_s=\mu, \mbox{ such that } \cr
&\mbox{ either } \lambda_i=r_{\alpha_i,n_ip}\cdot\lambda_{i-1} \mbox{ for some }\alpha_i\in \Delta_c^+, n_i\in \bbz\cr
&\mbox{ or } \lambda_i=\lambda_{i-1} -\gamma_i \mbox{ for some }\gamma_i\in \overline\Delta_1^+\mbox{ with } p|\langle\lambda_{i-1}+\rho,\chi_{\gamma_i}\rangle.
\end{align}
\end{prop}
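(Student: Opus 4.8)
The plan is to run the same induction as in \S\ref{proof sub}, on the height of $\lambda-\mu$, written $\sum_{\alpha\in\Pi}c_\alpha$ for $\lambda-\mu=\sum_{\alpha\in\Pi}c_\alpha\alpha$, but to refine the inductive step so that every link of the chain is one of the two elementary moves in $(\ref{second linked})$, instead of collapsing the isotropic contributions into the coarser relation $\upuparrows$. The case $\mu=\lambda$ is trivial (take $s=0$). For $\mu<\lambda$, Theorem \ref{Main Thm}(3) forces $\hat L(\mu)$ to be a composition factor of $\hat Z(\lambda)^1$; since $\hat Z(\lambda)^1\supseteq\hat Z(\lambda)^2\supseteq\cdots$, every $[\hat Z(\lambda)^i:\hat L(\mu)]\geq 0$ while $[\hat Z(\lambda)^1:\hat L(\mu)]\geq 1$, so $\hat L(\mu)$ occurs with strictly positive multiplicity in $\sum_{i>0}[\hat Z(\lambda)^i]$. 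By the sum formula, Theorem \ref{Main Thm}(1), and the fact that every baby Verma module appearing on its right-hand side contributes $\hat L(\mu)$ with non-negative multiplicity, this positive total must already be visible in one of the positive summands: there are $\theta\in\Delta_c^+\cup\overline\Delta_1^+$ and $i\geq 0$ with $[\hat Z(\lambda_{2i+1}):\hat L(\mu)]>0$, in the notation of \S\ref{linkage rel}. Put $\nu:=\lambda_{2i+1}$; then $\mu\leq\nu$ (as $\hat Z(\nu)$ is a highest weight module of highest weight $\nu$) and $\nu<\lambda$ (as $\nu$ is $\lambda$ minus a positive multiple of $\theta$), so the height of $\nu-\mu$ is strictly smaller and the induction hypothesis supplies a chain of the required form from $\nu$ down to $\mu$.

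It then remains to prepend a sequence of elementary moves running from $\lambda$ to $\nu$. If $\theta$ is in (Case 1) or (Case 3), i.e. $\theta\in\Delta_c^+$, this is exactly what the proof of Lemma \ref{linkage lemma} already delivers: one has $\lambda=\lambda_0\geq\lambda_1\geq\cdots\geq\lambda_{2i+1}=\nu$ with $\lambda_j=r_{\alpha_j,n_jp}\cdot\lambda_{j-1}$ for suitable $\alpha_j\in\Delta_c^+$, $n_j\in\bbz$. If $\theta=\gamma\in\overline\Delta_1^+$ (that is, (Case 2)), the notation of \S\ref{linkage rel} reads $\lambda_j=\lambda-j\gamma$, and the relevant hypothesis is $m_\gamma=p$, i.e. $p\mid\langle\lambda+\rho,\chi_\gamma\rangle$. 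Here I would record, from \S\ref{Chevalley Basis} and \cite[Proposition 1.33]{CW}, the normalization that for an isotropic root $\langle\gamma,\chi_\gamma\rangle=\gamma(H_\gamma)=(\gamma,\gamma)=0$; consequently $\langle(\lambda-j\gamma)+\rho,\chi_\gamma\rangle=\langle\lambda+\rho,\chi_\gamma\rangle\equiv 0\pmod p$ for every $j\geq 0$, so that the string
$$\lambda=\lambda-0\cdot\gamma\;\geq\;\lambda-\gamma\;\geq\;\cdots\;\geq\;\lambda-(2i+1)\gamma=\nu$$
has each step of the second elementary type in $(\ref{second linked})$, with $\gamma_j=\gamma$ and the divisibility requirement automatically met. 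Concatenating this string with the chain obtained by induction finishes the proof.

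I expect the whole argument to be short. The only genuinely new ingredient beyond \S\ref{proof sub} is the isotropic identity $\gamma(H_\gamma)=0$ together with the observation that the divisibility condition then propagates unchanged along $\lambda,\lambda-\gamma,\lambda-2\gamma,\dots$, which is what lets a single long $\gamma$-string be broken into a succession of legal one-step moves; the remaining bookkeeping, that no factor $\hat L(\mu)$ is lost when passing from $\hat Z(\lambda)^1$ to the full alternating sum, is exactly as in the proof of Theorem \ref{linkage thm}. The mildest obstacle will be to state cleanly which case of \S\ref{linkage rel} the chosen $\theta$ belongs to and to keep the indices $n_j$ of the affine reflections consistent with those appearing in Lemma \ref{linkage lemma}.
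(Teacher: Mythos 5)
Your argument is correct and is essentially the paper's own route: the paper proves this proposition by running the induction of the proof of Theorem \ref{linkage thm} again and ``checking more in (Case 2)'', which is exactly what you do --- the non-isotropic strings are chains of downward affine reflections as in the proof of Lemma \ref{linkage lemma}, and the isotropic string stays legal step by step because $\langle\gamma,\chi_\gamma\rangle=(\gamma,\gamma)=0$ keeps $p\mid\langle\lambda_{j}+\rho,\chi_\gamma\rangle$ along $\lambda,\lambda-\gamma,\lambda-2\gamma,\dots$, the summand occurring only when $m_\gamma=p$. Your explicit remark that positivity of the total multiplicity of $\hat L(\mu)$ in $\sum_{i>0}[\hat Z(\lambda)^i]$ forces a positive summand of the sum formula to contain $\hat L(\mu)$ is a slightly more careful statement of the step the paper leaves implicit, but it is the same argument.
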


We further reformulate the statement in the proposition  concisely by defining $\mu\Uparrow\lambda$ if either $\lambda=\mu$ or they satisfy (\ref{second linked})

\begin{theorem}\label{stronglinkage2} Let $\lambda,\mu\in Y$. If $[\hat Z(\lambda),\hat L(\mu)]\ne 0$, then $\mu\Uparrow\lambda$.
\end{theorem}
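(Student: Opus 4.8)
The plan is to mimic the proof of Theorem \ref{linkage thm} given in \S\ref{proof sub}, but to keep track of a slightly finer bookkeeping so that the isotropic odd roots contribute genuine steps $\lambda_i = \lambda_{i-1}-\gamma_i$ rather than being absorbed into equivalence modulo $\bbz\overline\Delta_1^+$. First I would reduce to the case $\mu < \lambda$, since $\lambda = \mu$ makes the conclusion $\mu\Uparrow\lambda$ hold by definition; and I would set up an induction on the height $\sum_{\alpha\in\Pi}c_\alpha$ of $\lambda-\mu = \sum_{\alpha\in\Pi}c_\alpha\alpha$, exactly as in \S\ref{proof sub}.

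For the inductive step, assume $[\hat Z(\lambda),\hat L(\mu)]\ne 0$ with $\mu<\lambda$. By Theorem \ref{Main Thm}(3) the module $\hat L(\mu)$ is not a composition factor of $\hat Z(\lambda)/\hat Z(\lambda)^1\cong\hat L(\lambda)$, so it must occur as a composition factor of $\hat Z(\lambda)^1$. By the sum formula Theorem \ref{Main Thm}(1), since $\sum_{i>0}[\hat Z(\lambda)^i]$ bounds $[\hat Z(\lambda)^1]$ from above in the Grothendieck group and every summand on the right-hand side is an effective combination after cancellation against the subtracted terms (the standard Jantzen-type positivity argument, via the filtration $\{\hat Z(\lambda)^i\}$ and Lemma \ref{comp nu lemma}), there must exist $\theta\in\Delta_c^+\cup\overline\Delta_1^+$ and some $i\geq 0$ such that $\hat L(\mu)$ is a composition factor of $\hat Z(\lambda_{2i+1})$ in the notation of \S\ref{linkage rel}. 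Now I treat the root types separately: if $\theta$ is in (Case 1) or (Case 3), then $\lambda_{2i+1}$ is obtained from $\lambda$ by a chain of affine reflections $r_{\alpha,np}\cdot$ with $\alpha\in\Delta_c^+$, exactly as recorded in Lemma \ref{linkage lemma}, and these are precisely the moves allowed in the first alternative of (\ref{second linked}); if $\theta$ is in (Case 2), i.e. $\theta=\gamma\in\overline\Delta_1^+$, then the relevant summand is nonzero only when $m_\gamma=p$, i.e. $p\mid\langle\lambda+\rho,\chi_\gamma\rangle$, and $\lambda_{2i+1}=\lambda-(2i+1)\gamma = (\lambda-2i\gamma)-\gamma$, so by interposing the intermediate weights $\lambda-\gamma, \lambda-2\gamma,\dots,\lambda-(2i+1)\gamma$ we realize the passage from $\lambda$ to $\lambda_{2i+1}$ as a chain of moves of the second type in (\ref{second linked}), each legal because $p\mid\langle\lambda-2j\gamma+\rho,\chi_\gamma\rangle$ for all $j$ (as $(\gamma,\gamma)=0$ forces $\langle\gamma,\chi_\gamma\rangle=0$, so $\langle\lambda-2j\gamma+\rho,\chi_\gamma\rangle = \langle\lambda+\rho,\chi_\gamma\rangle$). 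Thus in all cases $\lambda_{2i+1}\Uparrow\lambda$, where I note that $\Uparrow$ is transitive by concatenation of the chains (\ref{second linked}).

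Finally, since $\lambda_{2i+1}<\lambda$ whenever $i\geq 0$ (the subtracted root is a positive sum of simple roots), the height of $\lambda_{2i+1}-\mu$ is strictly smaller than that of $\lambda-\mu$, so the inductive hypothesis applied to $[\hat Z(\lambda_{2i+1}),\hat L(\mu)]\ne 0$ yields $\mu\Uparrow\lambda_{2i+1}$; combining with $\lambda_{2i+1}\Uparrow\lambda$ and transitivity gives $\mu\Uparrow\lambda$, completing the induction. I expect the main obstacle to be the (Case 2) bookkeeping: one must check carefully that the Jantzen sum formula only produces odd-root contributions when $p\mid\langle\lambda+\rho,\chi_\gamma\rangle$ (so that the second alternative in (\ref{second linked}) is always applicable with its divisibility hypothesis satisfied), and that the positivity of the sum formula survives in the super setting — this is exactly the "checking more in (Case 2)" alluded to before the proposition, and it rests on Lemma \ref{lemma case 2} together with the effectivity of $[\coker\overline\Xi_j]$ read off from the exact sequences there.
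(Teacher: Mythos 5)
Your proposal is correct and follows essentially the same route as the paper, which proves Theorem \ref{stronglinkage2} by repeating the induction-on-height argument of \S\ref{proof sub} for Theorem \ref{linkage thm} "with checking more in (Case 2)"; your verification that the isotropic-root summands occur only when $m_\gamma=p$ and that $p\mid\langle\lambda-j\gamma+\rho,\chi_\gamma\rangle$ persists along the chain (since $\langle\gamma,\chi_\gamma\rangle=0$ for isotropic $\gamma$) is precisely the extra check the paper alludes to.
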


\begin{remark} One can expect to have Jantzen filtration and sum formula for Weyl modules of algebraic supergroups (\textit{cf}. \cite{LS}).
\end{remark}

\subsection*{Acknowledgement} The second-named author thanks Shun-Jen Cheng and Changjie Cheng very much for helpful discussion.

\end{document}